\documentclass[11pt]{amsart}
\usepackage{amssymb,color}
\usepackage{amsmath}
\usepackage{latexsym}
\usepackage{verbatim}
\usepackage[left=1.3in,right=1.3in,top=1in,bottom=1in]{geometry}

%


\def\e{{\varepsilon}}

\newcommand{\Z}{{\mathbb Z}}
\newcommand{\R}{{\mathbb R}}

\newcommand{\D}{{\mathbb D}}
\newcommand{\T}{{\mathbb T}}

\newcommand{\pD}{{\partial\mathbb{D}}}

\newcommand{\CC}{{\mathcal C}}

\newcommand{\CE}{{\mathcal E}}

\newcommand{\CL}{{\mathcal L}}
\newcommand{\CM}{{\mathcal M}}

\renewcommand{\Re}{{\mathrm{Re}}}
\renewcommand{\Im}{{\mathrm{Im}}}

\newtheorem{theorem}{Theorem}[section]
\newtheorem{lemma}[theorem]{Lemma}
\newtheorem{prop}[theorem]{Proposition}

\newtheorem{coro}[theorem]{Corollary}

\theoremstyle{definition}
\newtheorem*{remark}{Remark}
\theoremstyle{definition}
\newtheorem{defi}[theorem]{Definition}

\numberwithin{theorem}{section}
\numberwithin{equation}{section}

\sloppy


\begin{document}

\author[F.\ Wang]{Fengpeng Wang}
\address{Ocean University of China, Qingdao 266100, Shandong, China and Rice University, Houston, TX~77005, USA}
\email{wfpouc@gmail.com}
\thanks{F.W.\ was supported by CSC (No.201606330003) and NSFC (No.11571327).}

\author[D.\ Damanik]{David Damanik}
\address{Rice University, Houston, TX~77005, USA}
\email{damanik@rice.edu}
\thanks{D.D.\ was supported in part by NSF grants DMS--1361625 and DMS--1700131.}

\title{Anderson Localization for Quasi-Periodic CMV Matrices and Quantum Walks}

\begin{abstract}
We consider CMV matrices, both standard and extended, with analytic quasi-periodic Verblunsky coefficients and prove Anderson localization in the regime of positive Lyapunov exponents. This establishes the CMV analog of a result Bourgain and Goldstein proved for discrete one-dimensional Schr\"odinger operators. We also prove a similar result for quantum walks on the integer lattice with suitable analytic quasi-periodic coins.
\end{abstract}

\maketitle

\section{Introduction and Preliminaries}

The spectral theory of Schr\"odinger operators with random or quasi-periodic potentials modeling Hamiltonians of quantum mechanical systems has been an area of very active study since the 1970's. These two classes of potentials can be generated dynamically. This makes a unified proof of basic spectral results possible, such as the almost sure constancy of the spectrum and spectral type, including absolutely continuous spectrum, singular continuous spectrum and point spectrum. On physical grounds, the main motivation for studying spectral properties is their close connection (via, e.g., the RAGE Theorem) with dynamical properties of the corresponding time evolution in quantum mechanics given by the Schr\"odinger equation.

In physics, there is an interesting phenomenon called Anderson localization \cite{anderson}, which describes insulating behavior in the sense that quantum states are essentially localized in a suitable bounded region for all times, and which is named after the winner of the 1977 Nobel Prize in Physics, P.~W.~Anderson. In mathematics, Anderson localization often means that the corresponding operator has pure point spectrum with exponentially decaying eigenfunctions.

Recent developments in the theory of orthogonal polynomials on the unit circle (OPUC) have emphasized the importance of a class of unitary matrices called CMV matrices, a special class of unitary five-diagonal matrices. The corresponding unitary semi-infinite five-diagonal matrices were studied by Cantero, Moral and Vel$\acute{\rm a}$zquez \cite{cmv05}, while in \cite{OPUC1}, Simon introduced the corresponding notion of unitary doubly infinite five-diagonal matrices and coined the term extended CMV matrices.

In this paper, we study the spectral theory of regular and extended CMV matrices. Since these matrices can be regarded as the unitary analogues of Schr\"odinger (or more generally Jacobi) matrices, one expects many analogies between the spectral theory of CMV matrices and that of discrete Schr\"odinger operators. This has been worked out in a variety of settings, many of them were already addressed in \cite{OPUC2}, including periodic and decaying coefficients. See also the recent paper \cite{randomcmv} devoted to CMV matrices with random coefficients.

Conspicuously absent, however, is a systematic treatment of the case of analytic quasi-periodic coefficients. The latter have been studied extensively in the Schr\"odinger setting, but the corresponding CMV analogues do not yet exist in the same generality. Based on the existing Schr\"odinger literature, one expects purely absolutely continuous spectrum at small coupling and, for typical frequencies, Anderson localization in the region of spectral parameters with positive Lyapunov exponent.

Simon discusses this in the remarks and historical notes section following the general discussion of dynamically defined Verblunksy coefficients; see \cite[pp.706--707]{OPUC2}. He points out that ``understanding almost periodic Verblunsky coefficients is an intriguing open area'' and in particular describes what one should expect in the small coupling regime, as well as the difficulties one faces when trying to establish a localization result for quasi-periodic Verblunsky coefficients.

In this paper we address the questions concerning localization and prove the desired results for CMV matrices with analytic quasi-periodic coefficients in the positive Lyapunov exponent regime. In a forthcoming companion paper we will address the other question and establish purely absolutely continuous spectrum at small coupling.


\subsection{CMV matrices}

CMV matrices can be thought of in terms of being unitary matrices in canonical form or in terms of being a finite-band matrix representation associated with OPUC. Let us start with the OPUC point of view.

A probability measure $\mu$ on the unit circle $\partial\mathbb{D} = \{ z \in \mathbb{C} : |z| = 1 \}$ is called \emph{nontrivial} if it is not supported on a finite set. Let $\Phi_{n}(z)$ be the monic orthogonal polynomials, that is,
\begin{equation*}
    \text{$\Phi_{n}(z) = P_{n}[z^{n}]$ \quad  $P_{n} \equiv$ projection onto $\{ 1, z, \dots, z^{n-1} \}$ in $L^2(\partial\mathbb{D}, d\mu)$.}
\end{equation*}
The orthonormal polynomials are
\begin{equation*}
    \varphi_{n}(z) = \frac{\Phi_{n}(z)}{\lVert \Phi_{n}(z) \rVert_{\mu}},
\end{equation*}
where $\lVert \cdot \rVert_{\mu}$ is the norm of $L^2(\partial\mathbb{D}, d\mu)$.

For any polynomial $Q_{n}(z)$ of degree $n$, we define the \emph{reversed polynomial} (also called \emph{Szeg\H o dual} \cite{cmvfive}) $Q^{*}_{n}(z)$ by
\begin{equation*}
    Q^{*}_{n}(z)=z^{n} \overline{Q_{n}(1/\bar{z})}.
\end{equation*}
More specifically,
\begin{equation*}
    Q_{n}(z) = \sum_{j=0}^{n} c_{j}z^{j} \Rightarrow  Q^{*}_{n}(z) = \sum_{j=0}^{n} \bar{c}_{n-j}z^{j},
\end{equation*}
so the order of coefficients is reversed and complex conjugates are taken. Note that the symbol $^{*}$ does depend on $n$, but this dependence is often left implicit.

The monic orthogonal polynomials obey the \emph{Szeg\H o recurrence}, given by
\begin{equation*}
    \Phi_{n+1}(z) = z \Phi_{n}(z) - \bar{\alpha}_{n} \Phi^{*}_{n}(z),
\end{equation*}
with suitably chosen parameters $\alpha_{0}, \alpha_{1}, \cdots$, which are called the \emph{Verblunsky coefficients} and which belong to $\mathbb{D} = \{ z \in \mathbb{C}: \lvert z \rvert < 1 \}$, that is, $|\alpha_{j}| < 1$.

By orthonormalizing $1, z, z^{-1}, z^{2}, z^{-2}, \dots$, we get the \emph{CMV basis} $\{ \chi_{j} \}_{j=0}^{\infty}$ of $L^2(\partial\mathbb{D}, d\mu)$, and the matrix representation of multiplication by $z$ relative to the CMV basis gives rise to the \emph{CMV matrix} $\mathcal{C}$,
\begin{equation*}
    \mathcal{C}_{ij} = \langle \chi_{i}, z\chi_{j} \rangle.
\end{equation*}
It turns out that $\mathcal{C}$ takes the form
\begin{equation*}
    \mathcal{C}=
    \begin{bmatrix}
         \bar{\alpha}_{0} &  \bar{\alpha}_{1} \rho_{0}     &  \rho_{1} \rho_{0}               &                 &             &             &   \\
         \rho_{0}         &  -\bar{\alpha}_{1} \alpha_{0}  & -\rho_{1} \alpha_{0}             &                 &                &      &   \\
                          &  \bar{\alpha}_{2} \rho_{1}     & -\bar{\alpha}_{2} \alpha_{1}     &  \bar{\alpha}_{3} \rho_{2} & \rho_{3} \rho_{2} &        &   \\
                          &  \rho_{2} \rho_{1}             & -\rho_{2} \alpha_{1}             & -\bar{\alpha}_{3} \alpha_{2}  &  -\rho_{3} \alpha_{2}&     &  \\
                   &       &        &  \bar{\alpha}_{4} \rho_{3}  &  -\bar{\alpha}_{4} \alpha_{3}  & \bar{\alpha}_{5}\rho_{4} &     \\
                   &       &        &  \rho_{4}\rho_{3} & -\rho_{4}\alpha_{3}  & -\bar{\alpha}_{5}\alpha_{4}   &     \\
                     &    &   &    & \ddots  & \ddots  & \ddots
    \end{bmatrix}
\end{equation*}
where $\rho_j = \sqrt{1 - |\alpha_j|^2}$, $j \ge 0$.

There is a natural way to define a two-sided analog by simply extending the five diagonals in the obvious way using suitable $\alpha_j \in \D$, $j \le -1$, and again setting $\rho_j = \sqrt{1 - |\alpha_j|^2}$, $j \le -1$. This yields the extended CMV matrix $\mathcal{E}$, given by
\begin{equation*}
    \mathcal{E}=
    \begin{bmatrix}
         \ddots & \ddots & \ddots &     &   &    &   &     \\
                & -\bar{\alpha}_{0}\alpha_{-1}  & \bar{\alpha}_{1}\rho_{0}  & \rho_{1}\rho_{0} &  &   &   &    \\
                & -\rho_{0}\alpha_{-1}  & -\bar{\alpha}_{1}\alpha_{0}  &  -\rho_{1}\alpha_{0} &  &   &    &      \\
                &   & \bar{\alpha}_{2}\rho_{1} & -\bar{\alpha}_{2}\alpha_{1}  & \bar{\alpha}_{3}\rho_{2}  & \rho_{3}\rho_{2} &       &           \\
                &   & \rho_{2}\rho_{1} & -\rho_{2}\alpha_{1}  & -\bar{\alpha}_{3}\alpha_{2}  &  -\rho_{3}\alpha_{2} &       &               \\
                &   &   &    & \bar{\alpha}_{4}\rho_{3} & -\bar{\alpha}_{4}\alpha_{3}  & \bar{\alpha}_{5}\rho_{4}  &         \\
                &   &   &    & \rho_{4}\rho_{3} & -\rho_{4}\alpha_{3}  & -\bar{\alpha}_{5}\alpha_{4}  &                \\
                &   &   &    &    &  \ddots & \ddots & \ddots
    \end{bmatrix}.
\end{equation*}
To emphasize that $\mathcal{C}$ acts on the right half line, it is often also called a \emph{half-line CMV matrix}.

In $\mathcal{C}$ and $\mathcal{E}$, all unspecified matrix entries are implicitly assumed to be zero. It is known that the half-line (resp., extended) CMV matrix is a unitary operator on $\ell^{2}(\mathbb{N})$ (resp., $\ell^{2}(\mathbb{Z})$).

\subsection{Szeg\H o cocycle and Lyapunov exponent.}

In this paper, we consider Verblunsky coefficients that are generated dynamically, that is, $\alpha_{n}(x) = \alpha(T^{n}x) = \alpha(x+n\omega)$, where $T$ is an invertible map of the form $Tx=x+\omega$, and $x, \omega \in \T := \R/\Z$ are called \emph{phase} and \emph{frequency} respectively. We assume that the \emph{sampling function} $\alpha(x): \T \to \mathbb{D}$ is analytic and satisfies
\begin{equation}\label{e.sampfunctassump}
    \int_{\T} \log (1-|\alpha(x)|) d\mu > -\infty.
\end{equation}
Note that for rational frequency $\omega$, $\alpha_{n}(x)$ is periodic for every phase $x$. When the frequency $\omega$ is irrational, the sequence is quasi-periodic.

The Szeg\H o recurrence is equivalent to the following one,
\begin{equation}{\label{szegorecursion}}
    \rho_{n}(x) \varphi_{n+1}(z) = z \varphi_{n}(z) - \bar{\alpha}_{n}(x) \varphi_{n}^{*}(z),
\end{equation}
where $\rho_{n}(x)= \rho(x+n\omega)$ and $\rho(x) = (1-|\alpha(x)|^{2} )^{1/2}$.

Applying $^{*}$ to both sides of \eqref{szegorecursion}, we get
\begin{equation}{\label{szegorecursion2}}
    \rho_{n}(x) \varphi_{n+1}^{*}(z) =  \varphi_{n}^{*}(z) - \alpha_{n}(x) z\varphi_{n}(z).
\end{equation}
Then \eqref{szegorecursion} and \eqref{szegorecursion2} can be written as
\begin{equation*}
    \begin{bmatrix}
        \varphi_{n+1} \\
        \varphi_{n+1}^{*}
    \end{bmatrix}= S^{z}(x + n\omega)
    \begin{bmatrix}
        \varphi_{n} \\
        \varphi_{n}^{*}
    \end{bmatrix},
\end{equation*}
where
\begin{equation*}
    S^{z}(x)=\frac{1}{\rho_{n}(x)}\begin{bmatrix}
              z     &   -\bar{\alpha}_{n}(x) \\
              -\alpha_{n}(x) z & 1
    \end{bmatrix}.
\end{equation*}

Since $\det S^{z}(x) = z$, some authors prefer to study the following determinant 1 matrix instead,
\begin{equation*}
   M^{z}(x)=\frac{1}{\rho(x)}\begin{bmatrix}
              \sqrt{z}               &   -\frac{\bar{\alpha}(x)}{\sqrt{z}} \\
              -\alpha(x) \sqrt{z}   &     \frac{1}{\sqrt{z}}
    \end{bmatrix} \in \mathbb{SU}(1, 1),
\end{equation*}
which is called the \emph{Szeg\H o cocycle map}.

Then the $n$-step transfer matrix is defined by
\begin{equation}{\label{nstep}}
    M^{z}_{n}(x) = \prod \limits_{j=n-1}^{0} M^{z}(x+j\omega).
\end{equation}
It follows from \eqref{nstep} that
\begin{equation*}
     M^{z}_{n_{1} + n_{2}}(x) = M^{z}_{n_{2}}(x+n_{1}\omega)  M^{z}_{n_{1}}( x),
\end{equation*}
and hence
\begin{equation}{\label{mn1n2}}
    \log \| M^{z}_{n_{1}+n_{2}}( x) \| \leq \log \| M^{z}_{n_{1}}(x) \| + \log \| M^{z}_{n_{2}}(x+n_{1}\omega) \|.
\end{equation}
By integration in $x$, we get
\begin{equation*}
    L_{n_{1}+n_{2}}(z) \leq \frac{n_{1}}{n_{1}+n_{2}}L_{n_{1}}(z) + \frac{n_{2}}{n_{1}+n_{2}}L_{n_{2}}(z),
\end{equation*}
where
\begin{equation*}
    L_{n}(z) = \frac{1}{n} \int_{\T} \log \| M^{z}_{n}(x) \| dx.
\end{equation*}
This implies
\begin{equation*}
   \text{$L_{n}(z) \leq L_{m}(z)$ if $ m<n, m | n$}
\end{equation*}
and
\begin{equation*}
  \text{$L_{n}(z) \leq L_{m}(z) + C\frac{m}{n}$    if $m<n$.}
\end{equation*}
Notice that \eqref{mn1n2} means $\log \lVert M^{z}_{n}(x) \rVert$ is subadditive. Therefore, the \emph{Lyapunov exponent} $L(z)$ defined by
\begin{equation*}
    L(z) = \lim_{n \to \infty} L_{n}(z)
\end{equation*}
exists.

\subsection{Main Theorem}

From the discussion above, given an analytic function $\alpha(x): \T \to \mathbb{D}$ and the invertible map $Tx=x+\omega$, we can define a sequence of Verblunsky coefficients and the corresponding CMV matrix $\mathcal{E}_{\omega}(x)$ dynamically.

The first main theorem in our paper is:

\begin{theorem}{\label{talcmv}}
Consider the family of extended CMV matrices $\{ \mathcal{E}_{\omega}(x)\}$, where the Verblunsky coefficients are given by $\{ \alpha_{n}(x) = \alpha(x+n\omega)\}_{n \in \mathbb{Z}}$ and $\alpha(x): \T \to \mathbb{D}$ is analytic and obeys \eqref{e.sampfunctassump}.

Let $\mathcal{I} \subset \T$ and $\mathcal{K} \subset \partial \mathbb{D}$ be compact intervals. Assume that the Lyapunov exponent obeys
\begin{equation}
 L(z) > \delta_{0} > 0
\end{equation}
for every $\omega \in \mathcal{I}$ and every $z \in \mathcal{K}$.

Fix $x_{0} \in \T$. Then for almost every $\omega \in \mathcal{I}$, $\mathcal{E}_{\omega}(x_{0})$ has pure point spectrum on $\mathcal{K}$ with exponentially decaying eigenfunctions {\rm (}i.e. Anderson localization{\rm )}.
\end{theorem}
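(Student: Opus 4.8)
The plan is to adapt the multiscale argument of Bourgain and Goldstein to the unitary setting, using the Szeg\H o cocycle $M^z(x)$ in place of the $\SL(2,\R)$ Schr\"odinger cocycle. The logical skeleton is the following: (i) establish large deviation estimates (LDT) for $\frac{1}{n}\log\|M^z_n(x)\|$ that are uniform in $z \in \CK$ and in $\omega$ in a suitable set, (ii) use the avalanche principle together with the LDT to propagate positivity of $L_n(z)$ and to control the entries of the transfer matrices at the next scale, (iii) prove a sharp estimate on the number of phases $x$ (or frequencies $\omega$) for which two consecutive scales fail to match up, leading to the elimination of double resonances via semialgebraic set / Wegner-type arguments, and (iv) convert the resulting quantitative exponential decay of Green's functions on good boxes into genuine Anderson localization through a Shnol-type / paving argument. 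Throughout one works with the half-line truncations $\CE_{[a,b]}$ of $\CE_\omega(x_0)$ and relates their Green's functions to products of $M^z_n$ via the Szeg\H o recurrence and its dual \eqref{szegorecursion}--\eqref{szegorecursion2}.

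First I would set up the analytic and subharmonic machinery. The key analytic fact is that $z \mapsto M^z(x)$ and $x \mapsto M^z(x)$ extend holomorphically to strips/annuli (using analyticity of $\alpha$ and the assumption \eqref{e.sampfunctassump}, which guarantees $\log\rho$ is integrable so $L(z)$ is finite), hence $\frac1n\log\|M^z_n(x)\|$ is pluri-subharmonic in $(x,z)$ off a thin set. From this one gets, via the Bourgain--Goldstein--Schlag approach to LDT (Fourier coefficients of subharmonic functions decay, combined with an approximate sub-mean-value property), an estimate of the form
\begin{equation*}
 \big| \{ x \in \T : |\tfrac1n \log\|M^z_n(x)\| - L_n(z)| > n^{-\sigma} \} \big| < e^{-n^{\tau}}
\end{equation*}
for suitable $\sigma,\tau>0$, uniformly for $z \in \CK$ and $\omega \in \CI$. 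Then the avalanche principle upgrades this: on most boxes $[1,N]$ the transfer matrix has norm $e^{N(L - o(1))}$ and, more importantly, the truncated resolvent $(\CE_{[1,N]} - z)^{-1}$ has off-diagonal decay $|G_{[1,N]}(j,k;z)| \lesssim e^{-\gamma|j-k|}$ for $z$ outside a small ``bad'' set of parameters. The unitarity of $\CE$ is helpful here since it gives a priori $\|(\CE_{[1,N]}-z)^{-1}\| \le \dist(z,\sigma(\CE_{[1,N]}))^{-1}$ and clean relations between Green's function entries and solutions of $\CE u = zu$.

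The heart of the matter, and the step I expect to be the main obstacle, is the elimination of double resonances as a function of the frequency $\omega$ (the theorem fixes the phase $x_0$ and varies $\omega$, which is the Bourgain--Goldstein regime rather than the easier fixed-$\omega$/vary-$x$ regime). One must show that for all but a measure-zero set of $\omega \in \CI$, there is no pair of disjoint, far-apart boxes on $\Z$ both of which are ``resonant'' for the same generalized eigenvalue $z \in \CK$. This requires: (a) a quantitative version of the statement that the map $\omega \mapsto M^z_n(x_0)$ is ``transversal'' enough, exploiting that $M^z_n(x_0 + \cdot)$ depends on $\omega$ through the points $x_0, x_0+\omega, \dots, x_0+(n-1)\omega$ which spread out; (b) semialgebraic-set estimates à la Bourgain to bound the complexity of the set of bad $(\omega,z)$; and (c) a careful covering/Borel--Cantelli argument over scales $N_{k+1} = N_k^{C}$. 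Here the novelty relative to the Schr\"odinger case is purely technical: $M^z(x)$ lies in $\SU(1,1)$ rather than $\SL(2,\R)$, $z$ ranges over $\partial\D$ rather than $\R$, and one must carry the $\sqrt z$ branch consistently; none of this changes the structure of the argument, but the resonance-counting and the passage between the two scales must be rechecked with the five-diagonal (as opposed to tridiagonal) band structure of $\CE$, which means each ``site'' effectively carries a $2\times 2$ block and the Green's-function-to-transfer-matrix dictionary is slightly heavier.

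Finally, once one has for a.e.\ $\omega$ the statement ``every generalized eigenfunction of $\CE_\omega(x_0)$ with generalized eigenvalue in $\CK$ decays exponentially,'' localization on $\CK$ follows from the unitary analog of Schnol's theorem: polynomially bounded solutions exist for spectrally a.e.\ $z$, and the double-resonance elimination forces any such solution to decay on both half-lines, hence to be an $\ell^2$ eigenfunction with exponential decay; standard arguments then give that the spectral measure on $\CK$ is pure point. I would present steps (i)--(iv) as a sequence of lemmas, quoting the Bourgain--Goldstein--Schlag LDT and avalanche-principle toolkit and emphasizing only the modifications forced by passing from $\SL(2,\R)$ Schr\"odinger cocycles to $\SU(1,1)$ Szeg\H o cocycles.
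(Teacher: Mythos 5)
Your overall skeleton --- an LDT for the Szeg\H o cocycle, exponential off-diagonal decay of finite-volume Green's functions, elimination of double resonances in $\omega$ via semialgebraic complexity bounds and a covering argument over scales, and a Schnol-type reduction to polynomially bounded generalized eigenfunctions combined with a paving argument --- is exactly the route of the paper (which also quotes the CMV Schnol theorem and the Bourgain--Goldstein frequency lemma, and conjugates the $\mathbb{SU}(1,1)$ cocycle to $\mathrm{SL}(2,\mathbb{R})$ to import the LDT). One remark: the avalanche principle is not needed here, since positivity of $L$ is a hypothesis and the Bourgain--Goldstein scheme runs on the LDT alone; invoking it is harmless but not what the paper does.

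There is, however, one point where your sketch as written would fail, and a related one where it asserts rather than derives the key estimate. The compression $P_{[1,N]}\CE P_{[1,N]}^{*}$ of a unitary operator is not unitary (not even normal), so the identity $\|(\CE_{[1,N]}-z)^{-1}\| = \dist\big(z,\sigma(\CE_{[1,N]})\big)^{-1}$ you invoke for the truncation is false in general. The paper repairs this by modifying the two boundary Verblunsky coefficients to values $\beta,\gamma\in\pD$, which makes the truncation $\CE^{\beta,\gamma}_{\omega,[a,b]}$ unitary; all resonance conditions, Green's functions, and the final double-resonance argument are formulated for these boundary-modified operators, with a choice among $\pm\beta_0,\pm\gamma_0$ at the end. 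Relatedly, the ``Green's-function-to-transfer-matrix dictionary'' you defer is the substantive CMV-specific input: the paper uses Kr\"uger's factorization $\CE=\CL\CM$ and his formula expressing $|G^{\beta,\gamma}_{\omega,[a,b]}(j,k;z)|$ as a ratio of boundary-modified characteristic polynomials $\varphi^{\beta,\gamma}$, together with the upper bounds $|\varphi|\le\sqrt{2}\,\|S^z\|$ and, crucially, a lower bound on the denominator obtained by choosing the signs of $\beta,\gamma$ so that $|\varphi^{\beta,\gamma}_{\omega,[a,b]}|\gtrsim\|S^z_{b-a+1}\|$. Without this mechanism (or an equivalent), your step (ii) postulates the off-diagonal decay of $G$ on good boxes instead of proving it; one should also note that the boundary terms in the restricted eigenvalue equation depend on the parity of the interval endpoints, which must be tracked in the Shnol/paving step, though it does not change its structure.
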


\begin{remark}
It is well known (see, e.g., \cite[Theorem~3.4]{DFLY16}) that if every generalized eigenfunction $\xi = (\xi_{n})_{n \in \mathbb{Z}}$ of $\CE_{\omega}$ decays exponentially, then the operator $\mathcal{E}_{\omega}$ displays Anderson localization. That is, we need to prove for any $\xi = (\xi_{n})_{n \in \mathbb{Z}}$ and $z \in \partial\mathbb{D}$ satisfying
\begin{equation*}
    |\xi _{n}| \lesssim |n|^{C}
\end{equation*}
($\lesssim$ denotes inequality up to a multiplicative constant) and
$$
\mathcal{E}_{\omega} \xi = z \xi,
$$
that
\begin{equation*}
    \text{$\lvert \xi _{n} \rvert  \lesssim e^{-c\lvert n \rvert}$  for some $c > 0$.}
\end{equation*}
\end{remark}

It is not hard to see that same result can be established for half-line CMV matrices:

\begin{theorem}{\label{halflineal}}
Consider the family of CMV matrices $\{ \mathcal{C}_{\omega}(x)\}$, where the Verblunsky coefficients are given by $\{ \alpha_{n}(x) = \alpha(x+n\omega)\}_{n=0}^{\infty}$ and $\alpha(x): \T \to \mathbb{D}$ is analytic.

Let $\mathcal{I} \subset \T$ and $\mathcal{K} \subset \partial \mathbb{D}$ be compact intervals. Assume that the Lyapunov exponent obeys
\begin{equation*}
L(z) > \delta_{0} > 0
\end{equation*}
for every $\omega \in \mathcal{I}$ and every $z \in \mathcal{K}$.

Fix $x_{0} \in \T$. Then for almost every $\omega \in \mathcal{I}$, $\mathcal{C}_{\omega}(x_{0})$ has pure point spectrum on $\mathcal{K}$ with exponentially decaying eigenfunctions {\rm (}i.e. Anderson localization{\rm )}.
\end{theorem}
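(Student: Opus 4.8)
The plan is to run the argument exactly parallel to the proof of Theorem~\ref{talcmv}. The key point is that every quantitative ingredient used there is a statement about the Szeg\H o cocycle $M^{z}$ and its iterates $M^{z}_{n}$ from \eqref{nstep}, and these objects are literally the same for the half-line and the whole-line problems, since in both cases $\alpha_{n}(x_{0})=\alpha(x_{0}+n\omega)$. So I would first reduce, via \cite[Theorem~3.4]{DFLY16} together with the Remark following Theorem~\ref{talcmv}, to the statement that for a.e.\ $\omega\in\CI$ every polynomially bounded solution $\xi=(\xi_{n})_{n\ge 0}$ of $\CC_{\omega}(x_{0})\xi=z\xi$ with $z\in\CK$ decays exponentially.

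Next I would pass from the CMV eigenvalue equation to the cocycle. Because $\alpha$ is continuous on the compact torus $\T$ with values in the \emph{open} disc $\D$, one has $\sup_{\T}|\alpha|<1$, so $\rho_{n}(x_{0})\ge\rho_{\min}>0$ for all $n$ (in particular \eqref{e.sampfunctassump} holds automatically here). Using this and the five-diagonal structure of $\CC$, a formal solution of $\CC_{\omega}(x_{0})\xi=z\xi$ is, after the usual relabeling of the index by a factor of two and multiplication by the explicit invertible factors $\rho_{n}(x_{0})$, the same thing as an orbit of the Szeg\H o cocycle: there is a nonzero $v\in\mathbb{C}^{2}$, lying on a line determined by the corner entries of $\CC$ at the site $0$ and by $z$, with
\[
 c^{-1}\,\|M^{z}_{n}(x_{0})v\|\ \le\ |\xi_{2n-1}|+|\xi_{2n}|\ \le\ c\,\|M^{z}_{n}(x_{0})v\|,\qquad n\ge 1,
\]
for a constant $c=c(\rho_{\min})$; moreover the solution space at each fixed $z$ is one-dimensional. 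Thus the a priori bound $|\xi_{n}|\lesssim(1+n)^{C}$ becomes $\|M^{z}_{n}(x_{0})v\|\lesssim(1+n)^{C}$.

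Then I would invoke, essentially verbatim, the analysis carried out for Theorem~\ref{talcmv}. For the compact sets $\CI,\CK$ and under the hypothesis $\inf_{\omega\in\CI,\,z\in\CK}L(z)>\delta_{0}>0$, that proof supplies: a uniform large deviation estimate for $\tfrac1n\log\|M^{z}_{n}(x)\|$; the resulting closeness of $\tfrac1n\log\|M^{z}_{n}(x)\|$ to $L(z)$ off a set of $x$ of exponentially small measure, obtained via the avalanche principle; exponential off-diagonal decay, with only polynomially large norm, of the Green's functions of the restrictions of the operator to most finite intervals at most scales; and the parameter-exclusion step, which removes a set of $\omega\in\CI$ of measure zero so that for the remaining $\omega$ no ``double resonance'' occurs at two widely separated scales. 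None of these uses that the index set is all of $\Z$: the first two are statements about the cocycle, and the last two are about finite intervals, which for the half-line problem are simply required to lie in $\Z_{\ge 0}$. To conclude, fix a good $\omega$, a generalized eigenvalue $z\in\CK$ and the corresponding polynomially bounded $\xi$; for large $n$, expanding $\xi_{n}$ by the resolvent identity on an interval such as $[\lfloor n/2\rfloor,2n]\subset\Z_{\ge 0}$ --- which lies in the interior and is therefore good for all but negligibly many scales --- expresses $\xi_{n}$ through the exponentially small Green's function between $n$ and the endpoints, times the only polynomially large boundary values of $\xi$. If $\xi$ did not decay exponentially, one would get a good interval resonant at scale $\asymp n$ and, since $v$ lies on a fixed line independent of $n$, a second resonance at a fixed smaller scale, contradicting the parameter exclusion; hence $|\xi_{n}|\lesssim e^{-cn}$. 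The finitely many $\xi_{n}$ with $n$ near the endpoint $0$ need no estimate, and this endpoint bookkeeping is the only point at which the argument differs from the two-sided one.

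I expect no new obstacle beyond the one already present in the proof of Theorem~\ref{talcmv}: the genuinely hard step is the semialgebraic parameter-exclusion argument that rules out double resonances for the $\SU(1,1)$-valued Szeg\H o cocycle, together with the careful control of how the excluded set of $\omega$ depends on the scale --- and this is carried out once and for all in terms of the $M^{z}_{n}(x)$, so it transfers to the half-line setting without change. Accordingly, the content of Theorem~\ref{halflineal} is the observation that the half-line CMV eigenvalue equation is propagated by the very same transfer matrices and that restricting the index set to $\Z_{\ge 0}$ affects only harmless endpoint bookkeeping.
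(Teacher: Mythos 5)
Your proposal is correct and takes essentially the same route as the paper: Theorem~\ref{halflineal} is deduced there by rerunning the proof of Theorem~\ref{talcmv}, since every estimate concerns the cocycle $M^{z}_{n}$ and finite-interval restrictions, which for the half line are handled via the identification $\mathcal{C}_{[a,b]}=\mathcal{E}_{[a,b]}$ for $1\le a\le b$ and $\mathcal{C}_{[0,b]}=\mathcal{E}_{[0,b]}$ with $\alpha_{-1}=-1$. Your ``endpoint bookkeeping'' at $0$ is exactly this boundary identification, so no genuinely new ingredient is required.
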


To make our Theorem~\ref{talcmv} meaningful, we apply it to a specific model which was introduced in \cite{zhangpositive} by Zhenghe Zhang.

Consider Verblunsky coefficients with a coupling constant $\lambda$, that is, $\alpha(x)=\lambda v(x)$ with $\lambda \in (0, 1)$, and $v(x) = e^{2\pi i h(x)}$, where $h : \T \to \T$ is a non-constant analytic function. Notice that such a function $h \in C^{\omega}(\T, \T)$ can be written as $h(x) = kx + \theta(x)$ with $\theta \in C^{\omega}(\T, \mathbb{R})$ and the degree $k \in \Z$. Note also that there is a largest positive integer $q=q(\theta)$ such that $\theta(x+\frac{1}{q}) = \theta(x)$.

Recall the uniform positivity of its Lyapunov exponent (i.e., \cite[Theorem A]{zhangpositive}):

\begin{prop}{\label{pzz1}}
Let $\alpha(x)$, $\theta(x)$, $k$ be as above and let $\pi _{1}: \T \times \partial\mathbb{D} \to \T$ be the projection to the first component. Then there exists a finite set $\mathcal{F} = \mathcal{F}(\theta, k) \subset \T \times \partial\mathbb{D}$ with $\pi_{1}(\mathcal{F}) \subset \{ \frac{p}{q}: p=0,1,\cdots,q-1 \}$ with the following property:

For any compact set $\mathcal{A} \subset (\T \times \partial\mathbb{D}) \backslash \mathcal{F}$, there exists a constant $c_{0} = c_{0}(\theta, k, \mathcal{A}) \in \mathbb{R}$ such that
\begin{equation*}
L(z) \geq - \frac{1}{2} \log(1-\lambda) + c_{0}
\end{equation*}
for all $(\omega, z;\lambda) \in \mathcal{A} \times (0,1)$.
\end{prop}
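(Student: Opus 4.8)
The plan is to run Herman's complexification method, the new ingredient being that a nonzero winding number $k$ (more generally, the non-triviality of $\theta$) forces the multiplicative cocycle to grow. I would begin by normalizing. Since $v(x)=e^{2\pi i h(x)}$ is unimodular on $\T$, the factor $\rho(x)\equiv\rho=\sqrt{1-\l^{2}}$ is constant and $\overline{\a(x)}=\l e^{-2\pi i h(x)}$ extends holomorphically to the strip $\{|\Im x|<\eta\}$ on which $\theta$ is analytic; hence $M^{z}(\cdot)$ extends there to a holomorphic $\SL(2,\mathbb{C})$-valued map with $\det M^{z}\equiv1$. A one-line computation of $M^{z}(x)M^{z}(x)^{*}$ on $\T$ shows that the singular values of $M^{z}(x)$ are the $x$-independent numbers $\rho^{-1}(1+\l)$ and $\rho^{-1}(1-\l)$. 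Writing out a singular value decomposition and collapsing telescoping unitaries, one gets
\begin{equation*}
 \tfrac1n\log\|M^{z}_{n}(x)\| = -\tfrac12\log(1-\l)+\tfrac12\log(1+\l)+\tfrac1n\log\|P^{z}_{n}(x)\|,
\end{equation*}
where $P^{z}_{n}(x)$ is an alternating product of the matrix $\operatorname{diag}(1,\mu)$, $\mu=\tfrac{1-\l}{1+\l}\in(0,1)$, with unitaries $W^{z}_{j}(x)$ built from the singular vectors of consecutive cocycle matrices. Because $\tfrac12\log(1+\l)\in(0,\tfrac12\log2)$ is harmless, the proposition is equivalent to the uniform bound $L_{P}(z):=\lim_{n}\tfrac1n\int_{\T}\log\|P^{z}_{n}(x)\|\,dx\ge c_{0}$ over $(\o,z;\l)\in\CA\times(0,1)$.

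Next I would split on the arithmetic of $\o$. If $\o=p/q$ with $q=q(\theta)$, then $\theta(x+np/q)=\theta(x)$ for every $n$, since $p/q$ is an integer multiple of the period $1/q$ of $\theta$; thus $\a_{n}(x)=\l e^{2\pi i\theta(x)}e^{2\pi i k(x+np/q)}$ and a bounded gauge conjugation (removing the winding $e^{2\pi ikx}$ and the phase $e^{2\pi iknp/q}$) turns $M^{z}$ into a matrix independent of $x$ and $n$, of determinant $1$, whose trace has modulus $\rho^{-1}\cdot2\bigl|\cos(\tfrac12\arg z+\pi kp/q)\bigr|$. I would take $\CF$ to be exactly the finite set where this cosine vanishes; off $\CF$ the constant matrix is hyperbolic once $\l$ is near $1$, and $L(z)$ equals its explicit logarithmic spectral radius, which on a compact set avoiding $\CF$ exceeds $-\tfrac12\log(1-\l)+c_{0}$ for a suitable $c_{0}$ (the bound being vacuous, $c_{0}\le0$, for the complementary range of $\l$). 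For the remaining rationals $\o=p'/q'$ the periodic cocycle is genuinely non-constant in $x$, and the same monodromy computation applies once one notes that $x\mapsto\operatorname{tr}(M^{z}_{q'}(x))$ is a non-constant analytic function.

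For irrational $\o$ I would use subharmonicity. After complexifying $x\mapsto x+iy$, the map $y\mapsto\int_{\T}\log\|M^{z}_{n}(x+iy)\|\,dx$ is convex on $(-\eta,\eta)$; complexifying also in $z$, $L(z)$ is subharmonic on $\mathbb{C}\setminus\{0\}$ and, extracting the rank-one leading terms of $M^{z}(x)$, satisfies $L(z)=\tfrac12\bigl|\log|z|\bigr|-\tfrac12\log(1-\l^{2})+o(1)$ as $|z|\to0,\infty$, so the standard circular-average convexity argument already yields $\int_{\pD}L\ge-\tfrac12\log(1-\l^{2})$. To upgrade this averaged estimate to a pointwise one I would exploit the winding: a gauge conjugation that removes the winding of $v$ converts $M^{z}$ into $\rho^{-1}\begin{bmatrix}\sqrt z\,\xi(x)&-\l/\sqrt z\\-\l\sqrt z&\xi(x)^{-1}/\sqrt z\end{bmatrix}$ with $\xi(x)=v(x)/v(x-\o)$ a non-constant unimodular analytic function, and one shows that the unitaries $W^{z}_{j}$ equidistribute well enough (because $\xi$ is non-constant and $\o$ is irrational) that the alternating product $P^{z}_{n}$ cannot decay below the rate allowed by the singular gap; quantitatively, $L_{P}(z)$ is bounded below in terms of $\theta$, $k$, and $\dist\bigl((\o,z),\CF\bigr)$. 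Continuity of $L$ on $\CA\times(0,1-\varepsilon]$, the explicit $-\tfrac12\log(1-\l)$ blow-up as $\l\to1$, and compactness of $\CA$ then patch the local bounds into a single $c_{0}=c_{0}(\theta,k,\CA)$.

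The hard part is this last step. Because $\theta$ is analytic only on a strip of finite width one cannot push $\Im x\to\infty$ as in the classical Herman computation, and the cocycle becomes increasingly singular as $\l\to1$ (the ratio $\mu=\tfrac{1-\l}{1+\l}$ of the two singular values tends to $0$), so that $P^{z}_{n}$ degenerates toward a product of rank-one matrices whose growth is delicate. Keeping $L_{P}(z)$ bounded below uniformly in this regime — ruling out resonances between the rotation of $v$ and the spectral parameter $z$ away from the finitely many points of $\CF$, and dealing with Liouvillean $\o$ (for which the cohomological equation $\phi(x+\o)+\phi(x)=h(x)$ behind the gauge conjugation has only weak estimates, so that one should instead argue through periodic approximants) — is where essentially all the work of the proposition is concentrated.
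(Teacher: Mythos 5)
There is a genuine gap, and it sits exactly where you acknowledge it: the uniform pointwise lower bound for irrational (in particular Liouville) $\omega$, uniformly as $\lambda \to 1$, is the entire content of the proposition, and your proposal only asserts it (``one shows that the unitaries $W^{z}_{j}$ equidistribute well enough\dots''). The Herman-type subharmonic average over $z\in\pD$ only gives an integrated bound, and the upgrade to a pointwise bound off $\mathcal{F}$ is not supplied; the route you sketch for it (equidistribution of the singular-vector unitaries, a cohomological equation $\phi(x+\omega)+\phi(x)=h(x)$, periodic approximants for Liouville $\omega$) is arithmetic-sensitive, whereas the statement requires a single constant $c_{0}(\theta,k,\mathcal{A})$ valid for \emph{every} $\omega$ in the compact set $\mathcal{A}$, which contains rationals and Liouville irrationals side by side. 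For the same reason the rational/irrational case split is structurally problematic: an argument that treats $\omega=p/q$ by an explicit monodromy computation and irrational $\omega$ by a completely different (and here missing) mechanism does not patch into a bound that is uniform on a compact set accumulating at those rationals. A further soft point: your telescoped identity with the constant singular values $\rho^{-1}(1\pm\lambda)$ is valid only on $\T$, while your main strategy complexifies $x$, where $|v(x+iy)|\neq 1$ and the factorization must be redone.

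For comparison, the paper does not prove Proposition~\ref{pzz1} at all — it is quoted from \cite[Theorem A]{zhangpositive} — but the method is displayed in the paper's proof of Lemma~\ref{qwpositivity} for the Gesztesy--Zinchenko cocycle, and it bypasses every arithmetic issue. One writes the cocycle in the polar form $(\omega,\Lambda U)$ (your $\mathrm{diag}(1,\mu)$/unitary factorization is the same normalization), complexifies $x$ to a height $y_{j}>0$ inside the strip, and observes that at $\lambda=1$ the relevant entry of the unitary part reduces, up to a factor bounded away from $0$ and $\infty$, to an expression of the form $E^{2}v(z)+v(z-\omega)$; this vanishes identically only for the finitely many pairs $(\omega,z)$ forming $\mathcal{F}$ (whence $\pi_{1}(\mathcal{F})\subset\{p/q\}$), and otherwise has finitely many zeros in the strip, so it is bounded below on a suitable line $\Im x=y_{j}$. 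Zhang's uniform hyperbolicity criterion (Proposition~\ref{supositivity}) then yields $L(\omega,S_{y_{j}})\geq-\tfrac12\log(1-\lambda)+\eta_{j}$ for $\lambda$ close to $1$, and Avila's quantization of the acceleration together with convexity of $L$ in $y$ transfers this bound back to $y=0$ with only the bounded loss $n_{0}y_{0}/2\pi$, uniformly over a finite cover of the compact set $\mathcal{A}$ and with no Diophantine condition on $\omega$. That complexification-plus-acceleration step is precisely what replaces the unproved core of your proposal.
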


From Proposition~\ref{pzz1} we may infer that $L(z)$ is positive if $\lambda$ is sufficiently close to 1, and hence Theorem~\ref{talcmv} implies the following:

\begin{coro}{\label{example}}
Assume the setting of Proposition~\ref{pzz1} and fix $x_{0} \in \T$. Then there exists $\lambda_{0} \in (0,1)$ such that for every $\lambda \in (\lambda_{0}, 1)$ and almost every $\omega$, $\mathcal{E}_{\omega}(x_{0})$ displays Anderson localization in $\{ z \in \partial \D : (\omega, z) \in \mathcal{A} \}$.
\end{coro}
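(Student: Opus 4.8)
The plan is simply to combine Proposition~\ref{pzz1} with Theorem~\ref{talcmv}; the only genuine work is to pass from the ``product of intervals'' hypothesis in Theorem~\ref{talcmv} to the arbitrary compact set $\CA$ occurring in Proposition~\ref{pzz1}, and to keep track of the ``almost every $\omega$'' quantifier while doing so. First I would record that the Verblunsky coefficients here are $\alpha(x)=\lambda e^{2\pi i h(x)}$, which is analytic with $|\alpha(x)|\equiv\lambda<1$, so \eqref{e.sampfunctassump} holds trivially (the integrand is the finite constant $\log(1-\lambda)$); hence Theorem~\ref{talcmv} is applicable for every $\lambda\in(0,1)$.

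Next, since $\CA\subset(\T\times\partial\mathbb{D})\setminus\CF$ is compact and $\CF$ is finite, we have $\dist(\CA,\CF)>0$, so we may cover $\CA$ by finitely many boxes $\CI_1\times\CK_1,\dots,\CI_N\times\CK_N$, where each $\CI_j\subset\T$ and each $\CK_j\subset\partial\mathbb{D}$ is a compact interval (arc), chosen of diameter small enough that $\CI_j\times\CK_j$ is still a compact subset of $(\T\times\partial\mathbb{D})\setminus\CF$. Applying Proposition~\ref{pzz1} to each $\CI_j\times\CK_j$ produces a constant $c_0^{(j)}$, and with $c_0:=\min_{1\le j\le N}c_0^{(j)}$ we obtain
\begin{equation*}
 L(z)\ \ge\ -\tfrac12\log(1-\lambda)+c_0 \qquad\text{for all } (\omega,z)\in\textstyle\bigcup_{j}\CI_j\times\CK_j\ \supset\ \CA,\ \lambda\in(0,1).
\end{equation*}
Since $-\tfrac12\log(1-\lambda)\to+\infty$ as $\lambda\uparrow1$, there exist $\lambda_0\in(0,1)$ and $\delta_0>0$ such that $L(z)>\delta_0>0$ on every $\CI_j\times\CK_j$ whenever $\lambda\in(\lambda_0,1)$.

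Now fix $\lambda\in(\lambda_0,1)$. For each $j$, Theorem~\ref{talcmv} with $\mathcal{I}=\CI_j$ and $\mathcal{K}=\CK_j$ yields a full-measure set $\Omega_j\subset\CI_j$ such that, for every $\omega\in\Omega_j$, the operator $\CE_\omega(x_0)$ has pure point spectrum on $\CK_j$ with exponentially decaying eigenfunctions; equivalently (via the Remark) every polynomially bounded solution of $\CE_\omega(x_0)\xi=z\xi$ with $z\in\CK_j$ decays exponentially. Set $\Omega:=\bigcap_{j=1}^N\big(\Omega_j\cup(\T\setminus\CI_j)\big)$, which has full measure in $\T$. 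If $\omega\in\Omega$ and $z\in\{z\in\partial\mathbb{D}:(\omega,z)\in\CA\}$, pick $j$ with $(\omega,z)\in\CI_j\times\CK_j$; then $\omega\in\CI_j$ forces $\omega\in\Omega_j$, and $z\in\CK_j$, so every polynomially bounded generalized eigenfunction of $\CE_\omega(x_0)$ at the point $z$ decays exponentially. Since $z$ was arbitrary in the fiber, $\CE_\omega(x_0)$ displays Anderson localization in $\{z\in\partial\D:(\omega,z)\in\CA\}$.

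I do not expect a serious obstacle: the argument is a soft deduction. The only point requiring care is the order of operations with the exceptional sets — one must finitely cover $\CA$ by boxes \emph{before} invoking Theorem~\ref{talcmv}, so that the $\omega$-exceptional sets can be intersected without losing full measure, and one must extend each $\Omega_j$ trivially by $\T\setminus\CI_j$ so that the intersection is taken over all of $\T$.
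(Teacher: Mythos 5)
Your argument is correct and is exactly the route the paper takes: the paper states Corollary~\ref{example} as an immediate consequence of combining Proposition~\ref{pzz1} (uniform lower bound on $L$, hence positivity for $\lambda$ close to $1$) with Theorem~\ref{talcmv}, which is what you do. Your extra care — checking \eqref{e.sampfunctassump}, covering the compact set $\CA$ by finitely many product boxes $\CI_j\times\CK_j$ to match the interval hypotheses of Theorem~\ref{talcmv}, and intersecting the full-measure sets $\Omega_j\cup(\T\setminus\CI_j)$ — just fills in routine details the paper leaves implicit.
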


Moreover, \cite[Corollary 4]{zhangpositive} implies the following result:
\begin{coro}
Assume the setting of Proposition~\ref{pzz1} and fix $x_{0} \in \T$. Then for almost every $\omega$ with $d(\omega, \pi_{1}(\mathcal{F})) > 0$, there exists $\lambda_{0} \in (0,1)$ such that $\mathcal{E}_{\omega}(x_{0})$ exhibits Anderson localization on $\partial\mathbb{D}$ for all $\lambda \in (\lambda_{0}, 1)$.
\end{coro}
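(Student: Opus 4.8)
The plan is to combine the uniform positivity of the Lyapunov exponent furnished by Proposition~\ref{pzz1} with the localization mechanism of Theorem~\ref{talcmv}; the only real subtlety is that the exceptional set of frequencies produced by Theorem~\ref{talcmv} must be chosen independently of the coupling constant $\lambda$. Two features of Proposition~\ref{pzz1} are crucial. First, the bound $L(z)\ge-\tfrac12\log(1-\lambda)+c_0$ is \emph{uniform} in $\lambda\in(0,1)$, so letting $\lambda\uparrow1$ pushes $L$ above any prescribed level. Second, since $\pi_1(\CF)$ is a finite set, any $\omega$ with $d(\omega,\pi_1(\CF))>0$ can be enclosed in a compact interval bounded away from $\pi_1(\CF)$, which is exactly what is needed for the relevant compact subset of $\T\times\pD$ to avoid $\CF$.

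First I would fix a countable family of compact intervals $\{\CI_m\}_{m\ge1}$ covering $\T\setminus\pi_1(\CF)$ with $d(\CI_m,\pi_1(\CF))>0$ for each $m$ (possible because $\pi_1(\CF)$ is finite). For each $m$ the set $\CA_m:=\CI_m\times\pD$ is compact and disjoint from $\CF$, so Proposition~\ref{pzz1} provides a constant $c_0^{(m)}$ with
\begin{equation*}
  L(z)\ \ge\ -\tfrac12\log(1-\lambda)+c_0^{(m)}\qquad\text{for all }(\omega,z;\lambda)\in\CA_m\times(0,1).
\end{equation*}
Consequently there exist $\lambda_0^{(m)}\in(0,1)$ and $\delta_m>0$ with $L(z)>\delta_m$ on $\CI_m\times\pD$ for all $\lambda\in(\lambda_0^{(m)},1)$; this is in essence the uniform positivity statement of \cite[Corollary~4]{zhangpositive}.

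Next I would run the argument behind Theorem~\ref{talcmv} with the pair $(\lambda,z)$ in the role of the spectral parameter. Fix $m$ and $j\ge1$. On the compact window $\CI_m\times[\lambda_0^{(m)},1-1/j]\times\pD$ the Lyapunov exponent is $\ge\delta_m$, the Szeg\H o cocycle depends analytically on $(\lambda,z)$, and the Bourgain--Goldstein scheme underlying Theorem~\ref{talcmv} --- uniform large deviation estimates for $\tfrac1n\log\|M_n^z(x)\|$ together with the semialgebraic elimination of double resonances --- applies verbatim with $z\in\CK$ replaced by $(\lambda,z)$ ranging over this window. This yields a Lebesgue-null set $N_{m,j}\subset\CI_m$ such that for every $\omega\in\CI_m\setminus N_{m,j}$ the operator $\CE_\omega(x_0)$ displays Anderson localization on $\pD$ for all $\lambda\in[\lambda_0^{(m)},1-1/j]$. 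Put $N:=\pi_1(\CF)\cup\bigcup_{m,j}N_{m,j}$, a set of zero measure. Given $\omega\notin N$ with $d(\omega,\pi_1(\CF))>0$, choose $m$ with $\omega\in\CI_m$ and set $\lambda_0:=\lambda_0^{(m)}$; then $\CE_\omega(x_0)$ exhibits Anderson localization on $\pD$ for every $\lambda\in(\lambda_0,1)$, as claimed.

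The main obstacle is the step just sketched, namely checking that the null frequency set delivered by the proof of Theorem~\ref{talcmv} can be taken uniform in the auxiliary parameter $\lambda$. No genuinely new difficulty appears: $\lambda$ enters the transfer matrices analytically on exactly the same footing as $z$, and on each window the positivity of $L$ is uniform, so the large deviation bounds and the measure estimates in the resonance elimination are uniform in $(\lambda,z)$; nevertheless this forces one to revisit the proof of Theorem~\ref{talcmv} rather than invoke it as a black box. Finally, one should observe that $\lambda_0=\lambda_0^{(m)}$ genuinely depends on $\omega$, since the constant $c_0^{(m)}$ degrades as $\CI_m$ approaches $\pi_1(\CF)$; this is why the conclusion is phrased with $\lambda_0$ chosen after $\omega$, and why the hypothesis $d(\omega,\pi_1(\CF))>0$ is imposed.
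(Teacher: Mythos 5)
Your reduction in the first half is exactly what the paper intends: since $\pi_{1}(\CF)$ is finite, an $\omega$ with $d(\omega,\pi_{1}(\CF))>0$ sits inside a compact set $\CI_{m}$ avoiding $\pi_{1}(\CF)$, and Proposition~\ref{pzz1} applied to $\CI_{m}\times\pD$ gives $L\ge-\tfrac12\log(1-\lambda)+c_{0}^{(m)}$, hence uniform positivity for $\lambda$ near $1$ with $\lambda_{0}$ depending on $\omega$ only through $m$; this is precisely the content of the cited \cite[Corollary~4]{zhangpositive}, and the citation plus Theorem~\ref{talcmv} is the entirety of the paper's own argument. Where you diverge is the $\lambda$-quantifier: the paper feeds the positivity into Theorem~\ref{talcmv} as a black box, so its exceptional null set of frequencies is (implicitly, as in the quantifier order of Corollary~\ref{example}) allowed to depend on $\lambda$, whereas you read the statement literally and try to make the null set uniform in $\lambda$ by rerunning the Bourgain--Goldstein scheme with $(\lambda,z)$ as a joint parameter on windows $[\lambda_{0}^{(m)},1-1/j]$. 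That is the right idea if one insists on ``for a.e.\ $\omega$, for all $\lambda\in(\lambda_{0},1)$'', and your truncation at $1-1/j$ is not cosmetic but essential: for this model $C_{\alpha}\ge\log\bigl(2/\sqrt{1-\lambda^{2}}\bigr)\to\infty$ as $\lambda\uparrow1$, so no large deviation estimate (Lemma~\ref{lldt}) can be uniform over the whole interval $(\lambda_{0},1)$, only over each window.

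The caveat is that your central claim --- that the argument behind Theorem~\ref{talcmv} applies ``verbatim'' with $z\in\CK$ replaced by $(\lambda,z)$ --- is asserted rather than checked, and it is not quite verbatim. The elimination of double resonances (Lemma~\ref{lee}) works by forcing the resonant $z$ to lie within $C(\alpha)^{-n}$ of one of the finitely many eigenvalues of the blocks $\CE^{\beta,\gamma}_{\omega,[0,n_{0}-1]}(x_{0})$ and applying the LDT at those finitely many points; once $\lambda$ varies, these eigenvalues move with $\lambda$, so you must either discretize $\lambda$ at scale $C(\alpha)^{-n}$ (cheap, but it has to be done) or carry $\lambda$ as an additional variable in the semialgebraic reformulation of Lemma~\ref{lsemi}, where for this model $\rho=\sqrt{1-\lambda^{2}}$ keeps the entries algebraic in $\lambda$ and the degree bounds harmless. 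With that step carried out, your route proves a statement genuinely stronger than what the paper's one-line deduction delivers; as written, it is a correct and more careful outline rather than a complete proof, and if one is content with the weaker quantifier order of Corollary~\ref{example} (for each $\lambda$ near $1$, a.e.\ $\omega$), the paper's direct application of Theorem~\ref{talcmv} suffices and none of the extra machinery is needed.
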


\section{A Large Deviation Estimate}

In this paper we use the method developed by Jean Bourgain and Michael Goldstein in \cite{BG00} to prove Anderson localization, which means that a large deviation estimate (LDT) for analytic quasi-periodic Szeg\H o cocycles will play an important role.

The first LDT for quasi-periodic Schr\"odinger operators was obtained in \cite{BG00} and used there to prove Anderson localization. In this section, which is based on the more recent \cite[Theorem 1]{YZ14}, we will state an LDT for analytic quasi-periodic Szeg\H o cocycles that is stronger than that in \cite{BG00} and hence sufficient to prove Anderson localization.

Fix an irrational $\omega \in \T$ and consider the continued fraction expansion $\omega = [a_{1}, a_{2}, \cdots]$ with convergents $p_{s}/q_{s}$ for $s \in \mathbb{N}$. Let
\begin{equation*}
    \beta(\omega) = \limsup_{s \to \infty} \frac{\log q_{s+1}}{q_{s}}.
\end{equation*}

Since $\alpha(x)$ is analytic on $\T$, it has a bounded extension to a complex strip $\{ x+iy : \lvert y \rvert < h \}$, $h > 0$. Define its norm by
\begin{equation*}
    \lVert \alpha \rVert_{h} = \sup_{\lvert y \rvert < h} \lvert \alpha(x+iy) \rvert,
\end{equation*}
and set
\begin{equation}\label{e.calphadef}
C_{\alpha} = \log \frac{2}{\sqrt{1- \lVert \alpha \rVert_{h}^{2}}}.
\end{equation}

\begin{lemma}{\label{lldt}}
For any $\kappa > 0$, there exist $N(\kappa, C_{\alpha})$ and absolute constants $c_{0}, c_{1}$ that are independent of $\kappa, C_{\alpha}$, such that if $\beta(\omega) < c_{0} \cdot \kappa / C_{\alpha}$, $z \in \partial\mathbb{D}$ and $n > N(\kappa, C_{\alpha})$, then
\begin{equation}{\label{ildtszego}}
{\rm mes} \left\{ x \in \T : \Big | \frac{1}{n} \log \lVert M^{z}_{n}(x) \rVert -L_{n}(z) \Big |  > \kappa \right\} < e^{-(c_{1}/C_{\alpha}^{3})\kappa ^{3}n}.
\end{equation}
\end{lemma}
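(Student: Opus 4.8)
The plan is to follow the proof of the large deviation estimate for analytic quasi-periodic $\SL(2,\R)$-cocycles given in \cite{YZ14} (itself a refinement of \cite{BG00}) and adapt it to the Szeg\H o cocycle $M^{z}(x)$. That argument rests on two soft facts about the functions $u_{n}(x) \defeq \frac{1}{n}\log\lVert M^{z}_{n}(x)\rVert$: \textbf{(a)} each $u_{n}$ extends to a subharmonic function on a fixed strip $\{x+iy : \lvert y\rvert < h_{1}\}$ for some $0 < h_{1} < h$, with the two-sided bound $0 \le u_{n} \le C_{\alpha}$ there; and \textbf{(b)} $u_{n}$ is almost invariant under the rotation $T$, namely $\lvert u_{n}(x) - u_{n}(x+\omega)\rvert \le 2 C_{\alpha}/n$ for all $x \in \T$. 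Once (a) and (b) are in hand, the rest of the proof is an essentially verbatim transcription of \cite{YZ14}. Note also that since $0 \le L_{n}(z) = \langle u_{n}\rangle_{\T} \le C_{\alpha}$, the set in \eqref{ildtszego} is empty when $\kappa \ge C_{\alpha}$, so we may assume $0 < \kappa < C_{\alpha}$, i.e. $\kappa/C_{\alpha} < 1$, throughout.

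For \textbf{(a)}: although $\rho(x) = (1 - \lvert\alpha(x)\rvert^{2})^{1/2}$ is not analytic, its square $\rho(x)^{2}$ continues to the holomorphic function $1 - \alpha(x)\overline{\alpha(\bar x)}$, whose values on the strip lie in the disk $\{w : \lvert w - 1\rvert \le \lVert\alpha\rVert_{h}^{2} < 1\}$; this region is simply connected and avoids the origin, so $1/\rho$, and hence every entry of $M^{z}(x)$ (with $\bar\alpha(x) \defeq \overline{\alpha(\bar x)}$ and a fixed branch of $\sqrt{z}$, whose choice only changes $M^{z}_{n}$ by an overall sign), extends holomorphically to the strip. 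Therefore $x \mapsto \log\lVert M^{z}_{n}(x)\rVert$, being the logarithm of the norm of a nonvanishing holomorphic matrix-valued function, is subharmonic. For $z \in \partial\mathbb{D}$ we have $\lvert\sqrt{z}\rvert = 1$, so the Hilbert--Schmidt norm of $M^{z}(x)$ on the strip is at most $\lvert\rho(x)\rvert^{-1}(2 + 2\lVert\alpha\rVert_{h}^{2})^{1/2} \le 2(1 - \lVert\alpha\rVert_{h}^{2})^{-1/2} = e^{C_{\alpha}}$, whence $\lVert M^{z}_{n}(x)\rVert \le e^{n C_{\alpha}}$ and $u_{n} \le C_{\alpha}$; and $\det M^{z}(x) \equiv 1$ forces $\lVert M^{z}_{n}(x)\rVert \ge 1$, i.e. $u_{n} \ge 0$. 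For \textbf{(b)}: from $M^{z}_{n+1}(x) = M^{z}(x+n\omega)\,M^{z}_{n}(x) = M^{z}_{n}(x+\omega)\,M^{z}(x)$ and $\lVert M^{z}(x)^{\pm 1}\rVert \le 2(1 - \lVert\alpha\rVert_{h}^{2})^{-1/2} = e^{C_{\alpha}}$ on $\T$, two applications of submultiplicativity give $\lvert \log\lVert M^{z}_{n}(x+\omega)\rVert - \log\lVert M^{z}_{n}(x)\rVert\rvert \le 2 C_{\alpha}$, which is (b).

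With (a) and (b) in place I would import the machinery of \cite{BG00, YZ14} as follows. Subharmonicity together with the bound $C_{\alpha}$ and the Riesz representation on the strip yields the Fourier-coefficient decay $\lvert\hat u_{n}(\ell)\rvert \lesssim C_{\alpha}/(h_{1}\lvert\ell\rvert)$ for $\ell \ne 0$ (with $\hat u_{n}(0) = L_{n}(z)$), hence the averaging bound $\lVert q^{-1}\sum_{j=0}^{q-1} u_{n}(\cdot + j/q) - L_{n}(z)\rVert_{L^{2}(\T)} \lesssim C_{\alpha}/(h_{1} q)$ for every $q \in \N$, and an $L^{2}$ modulus-of-continuity estimate $\lVert u_{n}(\cdot + t) - u_{n}\rVert_{L^{2}(\T)} \lesssim (C_{\alpha}/h_{1})\,\lvert t\rvert^{1/2}$. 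One then chooses $q = q_{s}$, a denominator of a continued-fraction convergent of $\omega$, at a scale comparable to $C_{\alpha}/\kappa$; this is exactly where the hypothesis $\beta(\omega) < c_{0}\kappa/C_{\alpha}$ enters, as it makes the orbit segment $\{x + j\omega\}_{0 \le j < q_{s}}$ close enough to the arithmetic progression $\{x + j p_{s}/q_{s}\}_{0 \le j < q_{s}}$ for the modulus-of-continuity bound and (b) to pass from $q_{s}^{-1}\sum_{j} u_{n}(\cdot + j/q_{s})$ to $q_{s}^{-1}\sum_{j} u_{n}(\cdot + j\omega)$ and thence to $u_{n}$ itself, with controlled $L^{2}$ error. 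A final step splits $M^{z}_{n}$ into $\lfloor n/q_{s}\rfloor$ consecutive factors of length $\approx q_{s}$, uses the avalanche principle to relate $u_{n}$ to the block average $\lfloor n/q_{s}\rfloor^{-1}\sum_{i} u_{q_{s}}(\cdot + i q_{s}\omega)$, and combines the near-independence of the blocks (again read off from the Fourier decay and the arithmetic of $\omega$) with the $L^{2}$ smallness to upgrade it to the subexponential-in-$n$ measure bound \eqref{ildtszego}, the powers of $\kappa$ and $C_{\alpha}$ emerging from the choice $q_{s} \sim C_{\alpha}/\kappa$ and the number of blocks $\sim n\kappa/C_{\alpha}$.

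The only genuinely new content is the verification of (a) and (b); everything downstream is a transcription of the Schr\"odinger case. Consequently the principal difficulty is bookkeeping: one must check that \emph{every} constant in the argument of \cite{YZ14} can be taken to depend only on $C_{\alpha}$ and the fixed strip width $h$ and to be otherwise absolute --- in particular that the single quantity $C_{\alpha}$ serves as the transfer-matrix bound at all scales --- and that the threshold linking $\beta(\omega)$ to $\kappa/C_{\alpha}$ and the minimal scale $N(\kappa, C_{\alpha})$ come out with exactly the dependence asserted in the statement. Beyond this careful tracking of constants I do not anticipate any essential obstacle.
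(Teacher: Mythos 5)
Your proposal is correct and follows essentially the same route as the paper: both reduce the statement to the Schr\"odinger-case large deviation estimate of \cite{YZ14}, whose proof is imported wholesale once one checks that the Szeg\H o cocycle extends analytically to a strip with the uniform bound $e^{C_{\alpha}}$ on the cocycle and its inverse (so that $C_{v}$ there may simply be replaced by $C_{\alpha}$). The only difference is organizational: the paper first conjugates $M^{z}(x)$ by the fixed unitary $Q$ to an analytic $\mathrm{SL}(2,\mathbb{R})$-valued cocycle and obtains the bound from the identity $\lVert A \rVert + \lVert A \rVert^{-1} = 2/\sqrt{1-\lvert \alpha(x)\rvert^{2}}$, whereas you verify subharmonicity, the bound $0 \le u_{n} \le C_{\alpha}$, and almost invariance for the $\mathbb{SU}(1,1)$ cocycle directly (your explicit treatment of the holomorphic extensions of $\rho$ and $\bar{\alpha}$ is in fact more careful than the paper's).
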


\begin{remark}
The condition $\beta(\omega) < c_{0} \cdot \kappa / C_{\alpha}$ is trivially satisfied in the case $\beta(\omega) = 0$. It is well known that the Diophantine numbers form a subset of $\{ \omega : \beta(\omega)=0 \}$ (cf.~Definition~\ref{def.DC} below), and hence our version is stronger than \cite[Lemma 1.1]{BG00}. In addition, the smallness of the measure in our version does not depend on $\sigma \in (0,1)$.
\end{remark}

\begin{proof}
It is easy to check that $M^{z}(x)$ is conjugate to an $\mathrm{SL}(2, \mathbb{R})$ matrix $A^z(x)$ via
\begin{equation*}
    Q = \frac{-1}{1+i} \begin{bmatrix}
              1   &  -i\\
              1   &   i
    \end{bmatrix} \in \mathbb{U}(2),
\end{equation*}
that is,
\begin{equation}{\label{esl2r}}
        A^{z}(x) = Q^{*}M^{z}(x)Q \in {\rm SL(2, \mathbb{R})}.
\end{equation}
Since $M^{z}(x)$ is analytic in $x \in \T$ and $z \in \partial\mathbb{D}$, so is $A^{z}(x)$.
By conjugacy, $\lVert A^{z}(x) \rVert = \lVert M^{z}(x) \rVert$ and $\lVert A^{z}_{n}(x) \rVert = \lVert M^{z}_{n}(x) \rVert $, where
\begin{equation}{\label{tsl2r}}
    A^{z}_{n}(x)  = \prod \limits_{j=n-1}^{0} A^{z}(x + j\omega).
\end{equation}
Hence \eqref{ildtszego} is equivalent to
\begin{equation}{\label{ildtszego2}}
{\rm mes} \left\{ x \in \T : \Big | \frac{1}{n} \log \lVert A^{z}_{n}(x) \rVert -\tilde{L}_{n}(z) \Big |  > \kappa \right\} < e^{-(c_{1}/C_{\alpha}^{3})\kappa ^{3}n},
\end{equation}
where
\begin{equation*}
    \tilde{L}_{n}(z) = \frac{1}{n} \int_{\T} \log \lVert A^{z}_{n}(x) \rVert dx.
\end{equation*}
According to the polar decomposition of $\text{SL}(2,\mathbb{C})$ matrices (for details, see \cite[Section~3.2]{zhangpositive} or Section \ref{polardecomposition} in this paper),
\begin{equation*}
    tr(A^{*}A)=|a|^{2}+|b|^{2}+|c|^{2}+|d|^{2}=\|A\|^{2}+\|A\|^{-2},
\end{equation*}
where $a,b,c,d$ are the entries of matrix $A$.

Then a direct calculation shows
\begin{equation*}
    \begin{split}
    \lVert A^{z}(x) \rVert + \lVert A^{z}(x) \rVert ^{-1}
    &= \lVert M^{z}(x) \rVert + \lVert M^{z}(x) \rVert ^{-1} \\
    &= \sqrt{tr((M^{z}(x))^{*}M^{z}(x))+2} \\
    &= \frac{\sqrt{2+2|\alpha(x)|^{2}+2\rho^{2}(x)}}{\rho(x)} \\
    &= \frac{2}{\sqrt{1- \lvert \alpha(x) \rvert^{2}}}.
    \end{split}
\end{equation*}
Hence, $\sup_{\lvert y \rvert < h} \lVert A^{z}(x+iy) \rVert, \sup_{\lvert y \rvert < h} \lVert A^{z}(x+iy)^{-1} \rVert \leq \frac{2}{\sqrt{1- \lVert \alpha \rVert_{h}^{2}}}$ and
\begin{equation*}
     \sup_{\lvert y \rvert < h}  \frac{1}{n}\log \lVert A^{z}_{n}(x+iy)\rVert
         \leq \log \frac{2}{\sqrt{1- \lVert \alpha \rVert_{h}^{2}}} = C_{\alpha}.
\end{equation*}
Since $A^{z}(x) \in {\rm SL(2, \mathbb{R})}$ is analytic, the proof of \eqref{ildtszego2} is the same as the proof of \cite[Theorem 1]{YZ14}, we just need to replace the use of $C_{v}$ there by our $C_{\alpha}$.
\end{proof}

\section{Estimating Transfer Matrices and Green's Functions}\label{section3}

In this section we give various bounds on the norms of $n$-step transfer matrices. First we establish a relation between $n$-step transfer matrices and the corresponding Lyapunov exponents. Then, by the method developed by Helge Kr\"uger in \cite{Krueg2013IMRN}, we can get that the entries of the Green's function decay exponentially off the diagonal, which is essential for proving Anderson localization.

\subsection{Norm Estimates of Transfer Matrices by Lyapunov Exponents}{\label{aetm}}

In this subsection we state two lemmas whose Schr\"odinger version was proved in \cite{BG00}. Since the proofs in our case are somewhat similar to the proofs of the corresponding statements in \cite{BG00}, our discussion focuses primarily on the difference between the proofs. From the proof of Lemma~\ref{lldt}, we know that $\| M^{z}_{n}(x) \|=\|A^{z}_{n}(x)\|$, hence we can use them interchangeably.

\begin{defi}\label{def.DC}
We say that $\omega \in \T$ satisfies a Diophantine condition, denoted $\omega \in \mathrm{DC}$, if for some $A > 0$, we have
\begin{equation}\label{dcac2}
\lVert k \omega \rVert > c \lvert k \rvert^{-A} \text{ for } k \in \mathbb{Z} \backslash \{0\}.
\end{equation}
\end{defi}

It is well known that Lebesgue almost every $\omega \in \T$ satisfies a Diophantine condition. Moreover, $\omega \in \mathrm{DC}$ implies $\beta(\omega) = 0$.

\begin{lemma}\label{lub}
Assume $\omega \in \mathrm{DC}$. Then for all $x \in \T$ and $z \in \partial\mathbb{D}$, we have
\begin{equation}{\label{iub}}
\frac{1}{n} \log \lVert M^{z}_{n}(x) \rVert < L_{n}(z) + C\kappa
\end{equation}
for any $\kappa > 0$ and $n$ large enough.
\end{lemma}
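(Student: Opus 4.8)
The plan is to deduce the pointwise upper bound \eqref{iub} from the averaged large deviation estimate \eqref{ildtszego} (Lemma~\ref{lldt}) together with the subharmonicity of $\frac1n\log\|A^z_n(\,\cdot\,)\|$ and a uniformization over the phase $x$. Recall from the proof of Lemma~\ref{lldt} that $u_n(x)\defeq\frac1n\log\|A^z_n(x)\|=\frac1n\log\|M^z_n(x)\|$ extends to the strip $|\Im x|<h$ as a subharmonic function bounded above by $C_\alpha$. Since $\omega\in\mathrm{DC}$ implies $\beta(\omega)=0$, the hypothesis $\beta(\omega)<c_0\kappa/C_\alpha$ of Lemma~\ref{lldt} holds for every $\kappa>0$, so for $n$ large we have $\mathrm{mes}\{x:|u_n(x)-L_n(z)|>\kappa\}<e^{-(c_1/C_\alpha^3)\kappa^3 n}$.

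First I would pass from ``large deviation on a small-measure set'' to ``deviation on no set at all'' by exploiting subharmonicity. The standard device (this is the analogue of \cite[Lemma~1.2 and the argument on pp.~9--10]{BG00}) is: a subharmonic function on a strip that is bounded above by $C_\alpha$ and lies below $L_n(z)+\kappa$ outside a set of measure $<e^{-c\kappa^3 n}$ must in fact lie below $L_n(z)+C\kappa$ \emph{everywhere} on the real line (with $C$ absolute, absorbing $C_\alpha$-dependence into the constant), provided $n$ is large enough that the exceptional set is small on the relevant scale. Concretely, one uses the sub-mean-value property over a disk of radius $\sim 1/n$ (or a Poisson-integral / averaging argument over a slightly shrunk strip): the value $u_n(x_0)$ is bounded by the average of $u_n$ over such a disk, on which the ``bad'' part contributes at most $C_\alpha\cdot e^{-c\kappa^3 n}/(\text{area})$, which is negligible, and the ``good'' part contributes at most $L_n(z)+\kappa$. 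Choosing the disk radius appropriately and using the upper bound $C_\alpha$ on the whole strip to control the overflow, one gets $u_n(x_0)<L_n(z)+C\kappa$ for every $x_0\in\T$. Here the Diophantine condition on $\omega$ enters only insofar as it guarantees applicability of the LDT for all $\kappa$; the deduction itself is a one-cocycle (avalanche-free) subharmonicity estimate and does not need the additional combinatorics of the lower bound.

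I expect the main obstacle to be the bookkeeping in the subharmonic averaging step: one must choose the averaging scale (radius $r$ of the disk, or the width of the strip shrinkage) so that simultaneously (i) the exceptional set has measure small compared to $r$, which forces $r\gg e^{-c\kappa^3 n}$ but this is automatic for large $n$, and (ii) the loss in replacing $u_n(x_0)$ by its average stays within $O(\kappa)$, using a Lipschitz/Harnack-type control that in turn relies on the uniform bound $C_\alpha$ on the strip $|\Im x|<h$. Keeping the final constant $C$ genuinely independent of $z\in\mathcal K$ (not just of $x$) requires observing that $C_\alpha$ in \eqref{e.calphadef} does not depend on $z$ at all, since $\|A^z(x)\|+\|A^z(x)\|^{-1}=2/\sqrt{1-|\alpha(x)|^2}$ is $z$-free; so the estimate is uniform in $z$ on $\partial\mathbb{D}$ for free. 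The remaining care is that ``$n$ large enough'' is allowed to depend on $\kappa$ (and on $C_\alpha$, hence on $\alpha$ and $h$) but not on $x$ or $z$, which matches the statement. Once these choices are fixed the proof is a short quantitative subharmonicity argument, essentially identical to the Schr\"odinger case in \cite{BG00} with $C_v$ replaced by $C_\alpha$, so I would state it in that form and refer to \cite{BG00} for the routine parts.
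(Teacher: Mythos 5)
Your overall strategy (deduce the uniform bound from the LDT of Lemma~\ref{lldt} plus subharmonicity of $u_n(x)=\frac1n\log\|A^z_n(x)\|$) is not the paper's route, and as written it has a genuine gap at the key step. The disk-average bookkeeping is wrong: the sub-mean value inequality bounds $u_n(x_0)$ by a \emph{two-dimensional} average over $D(x_0,r)$, whereas the LDT \eqref{ildtszego} only controls the set of \emph{real} $x$ where $u_n$ deviates from $L_n(z)$. Off the real axis you know nothing better than $u_n(x+iy)\le C_\alpha$; indeed for $y\neq 0$ the complexified cocycle typically has $\frac1n\log\|A^z_n(x+iy)\|$ close to a strictly larger quantity (cf.\ the acceleration/convexity discussion in Section~6), so the ``bad part'' of the disk is essentially the whole disk, not a set of area $e^{-c\kappa^3 n}$. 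Consequently the claimed estimate ``good part $\le L_n+\kappa$, bad part $\lesssim C_\alpha e^{-c\kappa^3 n}/\mathrm{area}$'' is unjustified, and the averaging over a disk of radius $\sim 1/n$ gives only $u_n(x_0)\lesssim C_\alpha$. The statement you want (``bounded subharmonic on a strip, $\le L_n+\kappa$ off a set of exponentially small measure on $\R$, hence $\le L_n+C\kappa$ everywhere on $\R$'') is in fact true, but its proof requires the ingredient you only gesture at parenthetically: a harmonic-measure/two-constants argument, e.g.\ apply the Poisson kernel on the upper and lower half-strips to get $u_n(x+iy)\le L_n+\kappa+C_\alpha\bigl(\epsilon/|y|+|y|/h\bigr)$ with $\epsilon=e^{-c\kappa^3 n}$, and only then average over a circle about $x_0$ with a radius chosen between $\epsilon$ and $h$. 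Without that step the proof does not go through.

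For comparison, the paper does not use the LDT here at all: it conjugates to the $\mathrm{SL}(2,\R)$ cocycle $A^z_n$, records the strip bounds $\|A^z_n(x+iy)\|+\|(A^z_n(x+iy))^{-1}\|\lesssim C^n$, and then invokes the proof of \cite[Lemma~2.1]{BG00}, i.e.\ the almost shift-invariance of $u_n$ along the orbit combined with equidistribution estimates (Fourier-coefficient decay of bounded subharmonic functions together with the Diophantine condition \eqref{dcac2}). So the Diophantine hypothesis enters the paper's argument directly through exponential-sum estimates, not merely through applicability of the LDT as in your sketch. Your route can be salvaged along the lines indicated above, but you would need to carry out the Poisson/harmonic-measure step explicitly (and fix the averaging scale accordingly); the disk-average argument as stated is not a proof.
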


\begin{proof}
Since $A^{z}_{n}(x)$ is analytic, it is easy to check that $\| A^{z}_{n}(x+iy) \| + \| (A^{z}_{n}(x+iy))^{-1} \| \lesssim C^{n}$. From the proof of \cite[Lemma~2.1]{BG00}, it is clear that
\begin{equation*}
\frac{1}{n} \log \lVert A^{z}_{n}(x) \rVert < \tilde{L}_{n}(z) + C\kappa
\end{equation*}
for $n$ large enough, which is equivalent to \eqref{iub}.
\end{proof}

\begin{lemma}{\label{lar}}
Assume $\omega \in \T$ satisfies \eqref{dcac2}. Then, for
\begin{equation*}
J > n^{2A},
\end{equation*}
we have
\begin{equation}{\label{ear}}
\frac{1}{J} \sum_{j=1}^{J} \Big (\frac{1}{n} \log \lVert M^{z}_{n}(x+ j\omega) \rVert\Big ) = L_{n}(z) + O \Big (\frac{1}{n}\Big),
\end{equation}
uniformly for all $x$ and $z$.
\end{lemma}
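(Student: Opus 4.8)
The plan is to average the uniform upper bound of Lemma~\ref{lub} against a matching lower bound obtained from the subadditivity structure, exploiting that the $J$-point Birkhoff average of $\frac1n\log\|M^z_n(\cdot)\|$ along the orbit $x, x+\omega,\dots,x+(J-1)\omega$ is essentially a Riemann-type approximation to $L_n(z)=\int_\T \frac1n\log\|M^z_n\|\,dx$ once $J$ is large compared to the relevant Fourier modes. More precisely, I would first record that $f_n(x):=\frac1n\log\|M^z_n(x)\|$ is, for fixed $n$, a real-analytic function on $\T$ (it equals $\frac1n\log\|A^z_n(x)\|$ with $A^z_n$ analytic and never degenerate) with $\|f_n\|_h \le C_\alpha$ uniformly in $z$; in particular its Fourier coefficients $\widehat{f_n}(k)$ satisfy $|\widehat{f_n}(k)| \le C_\alpha e^{-h|k|}$. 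Then, for the upper bound, Lemma~\ref{lub} gives $f_n(x+j\omega) < L_n(z) + C\kappa$ for every $j$ once $n$ is large (the bound there is uniform in $x$), so averaging over $j=1,\dots,J$ yields $\frac1J\sum_j f_n(x+j\omega) < L_n(z) + C\kappa$, and since $\kappa>0$ is arbitrary this is $\le L_n(z) + O(1/n)$ after absorbing $\kappa$ appropriately.

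For the lower bound I would use the Birkhoff sum directly. Write $\frac1J\sum_{j=1}^J f_n(x+j\omega) = \sum_{k\in\Z}\widehat{f_n}(k) e^{2\pi i k x} \cdot \frac1J\sum_{j=1}^J e^{2\pi i k j\omega}$. The $k=0$ term is exactly $\widehat{f_n}(0) = L_n(z)$. For $k\neq 0$, the inner geometric sum has modulus $\le \frac1J \cdot \frac{1}{|\sin(\pi k\omega)|} \lesssim \frac{1}{J\|k\omega\|}$, and by the Diophantine condition \eqref{dcac2}, $\|k\omega\| > c|k|^{-A}$, so the $k$-th contribution is bounded by $\frac{|k|^A}{cJ}\,C_\alpha e^{-h|k|}$. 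Summing over $0<|k|\le K$ gives $O(K^{A}e^{-hK'}/J)$ for the tail and, for the low modes $|k|\le K$, a bound $O(K^{A+1}/J)$; choosing $K \sim \log J$ makes the exponential tail negligible and leaves an error $O((\log J)^{A+1}/J)$. Now the hypothesis $J > n^{2A}$ forces $1/J < n^{-2A}$, so $(\log J)^{A+1}/J = o(1/n)$ for $n$ large — hence $\frac1J\sum_j f_n(x+j\omega) = L_n(z) + O(1/n)$, uniformly in $x$ and $z$. (One must check the $O$-constant is uniform in $z\in\partial\D$: this is immediate since all estimates depend on $z$ only through the uniform bound $\|f_n\|_h\le C_\alpha$.)

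The main obstacle, and the place where one must be slightly careful, is reconciling the two one-sided estimates into the clean two-sided statement \eqref{ear} with error exactly $O(1/n)$: the upper bound from Lemma~\ref{lub} naturally carries a $+C\kappa$ which one must argue can be taken of order $1/n$ for $n$ large (this works because for each fixed small $\kappa$ the inequality holds for all large $n$, and one then diagonalizes $\kappa = \kappa(n)\to 0$), while the lower bound via the Diophantine geometric-sum estimate genuinely produces an $O(n^{-2A}\,\mathrm{polylog})$ error that is comfortably $o(1/n)$. A secondary technical point is justifying the termwise Fourier manipulation: this is legitimate because the Fourier series of $f_n$ converges absolutely and uniformly thanks to the analytic bound $|\widehat{f_n}(k)|\le C_\alpha e^{-h|k|}$. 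Everything else is routine, and the argument is the exact unitary transcription of the Schrödinger-case reasoning in \cite{BG00}, with $C_\alpha$ playing the role of the potential's analyticity constant.
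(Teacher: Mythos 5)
The central step of your argument --- the claim that $f_n(x)=\frac1n\log\|M^z_n(x)\|=\frac1n\log\|A^z_n(x)\|$ satisfies $|\widehat{f_n}(k)|\le C_\alpha e^{-h|k|}$ with $h>0$ independent of $n$ --- is a genuine gap, and it is exactly the point where the lemma is nontrivial. Although $A^z_n(\cdot)$ is analytic, $\log\|\cdot\|$ is not an analytic function of the matrix entries: the extension $y\mapsto\log\|A^z_n(x+iy)\|$ to the strip is only \emph{subharmonic} (bounded by $nC_\alpha$), not harmonic, so the uniform strip bound $\|f_n\|_h\le C_\alpha$ you invoke does not produce exponential Fourier decay. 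For bounded subharmonic functions the correct general bound is $|\widehat{f_n}(k)|\lesssim C_\alpha/|k|$, and the decay cannot be upgraded uniformly in $n$: the near-resonant dips of $\|A^z_n(x)\|$ correspond to ``zeros'' of the subharmonic extension at distance $e^{-cn}$ from the real axis, so any exponential decay rate degenerates with $n$. With the true $O(1/|k|)$ decay your tail sum $\sum_{|k|>K}|\widehat{f_n}(k)|$ diverges, and repairing it by smoothing costs the derivative bound $\|\partial_x f_n\|_\infty\lesssim C^n$, which forces $K$ (and hence $J\gtrsim K^{A}$) to be exponentially large in $n$ --- incompatible with the hypothesis $J>n^{2A}$. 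So the Fourier computation as written does not close. A secondary point: your upper bound cannot be obtained by ``diagonalizing'' Lemma~\ref{lub}, since that lemma gives $L_n(z)+C\kappa$ only for $n\ge n_0(\kappa)$; this yields an $o(1)$ error, not $O(1/n)$ (and in any case, if your Fourier step were valid it would already be two-sided, so the asymmetric structure of your proof signals the confusion).

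The paper's proof is deliberately short for this reason: it only verifies the hypotheses needed to quote \cite[Lemma~3.1]{BG00}, namely that after the conjugation \eqref{esl2r}--\eqref{tsl2r} one has an analytic $\mathrm{SL}(2,\mathbb{R})$-valued cocycle with $\|\partial_x A^z_n(x)\|<C^n$ and $\bigl|\partial_x\bigl[\frac1n\log\|A^z_n(x)\|\bigr]\bigr|<C^n$, and then invokes that lemma, whose proof is built precisely on the theory of \emph{bounded subharmonic} functions (the $O(1/|k|)$ coefficient bound together with the Diophantine exponential-sum estimate and an additional mechanism beyond naive truncation) rather than on analyticity of $\log\|A^z_n\|$. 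If you want a self-contained argument, you must either reproduce that subharmonic-function machinery or cite it; replacing it by an exponential-decay claim for $\widehat{f_n}$ is not a repair but the removal of the actual difficulty.
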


\begin{proof}
It follows from \eqref{esl2r} and \eqref{tsl2r} that $\lVert \partial_{x}A^{z}_{n}(x) \rVert < C^{n}$ and $\Big \lvert \partial_{x} [\frac{1}{n} \log \lVert A^{z}_{n}(x) \rVert] \Big \rvert < C^{n}$, and hence \cite[Lemma~3.1]{BG00} implies
\begin{equation*}
\frac{1}{J} \sum_{j=1}^{J} \Big (\frac{1}{n} \log \lVert A^{z}_{n}(x+ j\omega) \rVert\Big ) = \tilde{L}_{n}(z) + O\Big(\frac{1}{n}\Big),
\end{equation*}
which is equivalent to \eqref{ear}.
\end{proof}

\subsection{Factorization and Restriction of CMV Matrices}

Next, we adopt some definitions and notations from \cite{Krueg2013IMRN, OPUC1}, which we will use below to get the desired exponential decay property.

Let $\{\alpha_{n}\}_{n=0}^{\infty}$ be the sequence of Verblunsky coefficients of a half-line CMV matix $\CC_{\omega}$. Define the unitary matrices
\begin{equation*}
    \Theta_{n}=
\begin{bmatrix} \bar{\alpha}_{n} & \rho_{n} \\ \rho_{n} & - \alpha_{n}
\end{bmatrix}.
\end{equation*}
Denote $\CL_{+}$, $\CM_{+}$ by
\begin{equation*}
\CL_{+}=\begin{bmatrix}
\Theta_{0} &  &  \\   & \Theta_{2} & \\  &    &  \ddots
\end{bmatrix}, \quad
\CM_{+}=\begin{bmatrix}
1 &  &  \\   & \Theta_{1} & \\  &    &  \ddots
\end{bmatrix},
\end{equation*}
where $1$ represents the $1 \times 1$ identity matrix. It is well known that $\CC_{\omega} = \CL_{+} \CM_{+}$.

Now let $\{\alpha_{n}\}_{n=-\infty}^{\infty}$ be a bi-infinite sequence of Verblunsky coefficients and denote the corresponding extended CMV matrices by $\CE_{\omega}$ and define $\CL_{\omega}, \CM_{\omega}$ as
\[
\CL_\omega
=
\bigoplus_{j\in\Z}\Theta_{2j},
\quad
\CM_\omega
=
\bigoplus_{j\in\Z} \Theta_{2j+1}.
\]
Then the analogous factorization of $\CE_{\omega}$ is given by $\CE_{\omega} = \CL_{\omega} \CM_{\omega}$.

Let $\CE_{\omega, [a,b]}$ denote the restriction of an extended CMV matrix to the finite interval $[a,b]$, defined by
\begin{equation*}
\CE_{\omega,[a,b]} = P_{[a,b]} \CE_{\omega} (P_{[a,b]})^{*},
\end{equation*}
where $P_{[a,b]}$ is the projection $\ell^{2}(\Z) \rightarrow \ell^{2}([a,b])$. $\CL_{[a,b]}, \CM_{[a,b]}$ are defined similarly.

With $\beta, \gamma \in \partial\D$, define the sequence of Verblunsky coefficients
$$
\tilde{\alpha}_{n} =
   \begin{cases}
   \beta,  \quad & n=a;  \\
   \gamma, \quad &n=b; \\
   \alpha_{n}, \quad & n \notin \{a,b\}. \\
   \end{cases}
$$
The corresponding operator is denoted by $\tilde{\CE}_{\omega}$ and we define $\CE_{\omega,[a,b]}^{\beta,\gamma}$ by $P_{[a,b]}\tilde{\CE_{\omega}}(P_{[a,b]})^{*}$. One can verify that $\CE_{\omega,[a,b]}^{\beta,\gamma}$ is unitary whenever $\beta,\gamma \in \pD$.

Then, for $z \in \mathbb{C}, \beta, \gamma \in \pD$, we define the polynomials
\[
\Phi_{\omega,[a,b]}^{\beta,\gamma}(z)
=
\det\left[z - \CE_{\omega,[a,b]}^{\beta,\gamma}\right], \quad
\varphi_{\omega,[a,b]}^{\beta,\gamma}(z)=(\rho_{a}\cdots\rho_{b})^{-1}\Phi_{\omega,[a,b]}^{\beta,\gamma}(z).
\]

\subsection{Estimating Transfer Matrices}

Let us recall \cite[Corollary~3.11, Lemma 3.12, and Lemma~4.4]{Krueg2013IMRN}, which give the connection between $\varphi_{\omega,[a,b]}^{\beta,\gamma}(z)$ and $\|S^z_{b-a+1}(\omega)\|$. An immediate corollary of them is the following:

\begin{coro}\label{coroupperbound}
For $\beta, \gamma \in \pD$, we have the following estimates,
    \[
\left| \varphi_{\omega,[a,j-1]}^{\beta,\alpha_{j-1}} (z) \right|
\leq
\sqrt{2} \|S^z_{j-a}(T^a \omega)\|,
\quad
\left| \varphi_{\omega,[k+1,b]}^{\alpha_{k+1},\gamma}(z) \right|
\leq
\sqrt{2}\|S^z_{b-k}(T^{k+1}\omega)\|.
\]
Moreover, for $\beta_{0}, \gamma_{0} \in \pD$, there exist
$$
\beta \in \{\beta_{0}, -\beta_{0}\}, \quad \gamma \in \{\gamma_{0}, -\gamma_{0}\}
$$
such that
\begin{equation*}
|\varphi_{\omega, [a,b]}^{ \beta,  \gamma}|>C\|S_{b-a+1}^{z}(T^{a}\omega)\|.
\end{equation*}
\end{coro}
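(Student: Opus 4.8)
The plan is to reduce everything to the three cited results of Kr\"uger \cite[Corollary~3.11, Lemma~3.12, Lemma~4.4]{Krueg2013IMRN}, which directly compare $|\varphi_{\omega,[a,b]}^{\beta,\gamma}(z)|$ with $\|S^z_{b-a+1}(T^a\omega)\|$, and then to handle the boundary Verblunsky coefficients carefully. For the first pair of estimates, I would observe that $\alpha_{j-1}$ is the \emph{genuine} Verblunsky coefficient at the right endpoint of $[a,j-1]$, so the operator $\CE_{\omega,[a,j-1]}^{\beta,\alpha_{j-1}}$ has its right boundary condition matching the ambient coefficients; this is exactly the configuration in which Kr\"uger's upper bound $|\varphi| \le \sqrt{2}\,\|S^z_{\bullet}(\cdot)\|$ applies, with the length being $(j-1)-a+1 = j-a$ and the base point shifted to $T^a\omega$. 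The second inequality is the mirror image with the left endpoint of $[k+1,b]$ carrying the true coefficient $\alpha_{k+1}$, and the length $b-(k+1)+1 = b-k$, base point $T^{k+1}\omega$.

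For the ``moreover'' part, the point is that one cannot expect a lower bound $|\varphi_{\omega,[a,b]}^{\beta_0,\gamma_0}| > C\|S^z_{b-a+1}(T^a\omega)\|$ for the specific prescribed pair $(\beta_0,\gamma_0)$, because the determinant $\Phi_{\omega,[a,b]}^{\beta,\gamma}(z) = \det[z - \CE_{\omega,[a,b]}^{\beta,\gamma}]$ depends on $\beta,\gamma$ and could conceivably be small for one choice of signs. The standard device (this is how the analogous Schr\"odinger statement works, and how Kr\"uger sets it up) is that $\varphi_{\omega,[a,b]}^{\beta,\gamma}(z)$ is, up to normalization, a matrix entry of the transfer matrix $S^z_{b-a+1}(T^a\omega)$ acting between boundary data encoded by $\beta$ and $\gamma$; changing $\beta_0 \mapsto -\beta_0$ (resp.\ $\gamma_0 \mapsto -\gamma_0$) corresponds to flipping a sign in the relevant boundary vector. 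Since the full transfer matrix has norm $\|S^z_{b-a+1}(T^a\omega)\|$, at least one of the four entries — equivalently, at least one of the four sign combinations $(\pm\beta_0,\pm\gamma_0)$ — must have modulus comparable to that norm; one then checks that flipping $\beta_0$ and flipping $\gamma_0$ act independently, so one gets a choice of the asserted form $\beta\in\{\beta_0,-\beta_0\}$, $\gamma\in\{\gamma_0,-\gamma_0\}$. I would make this precise by writing $S^z_{b-a+1}(T^a\omega)$ in a basis adapted to the boundary conditions and invoking Kr\"uger's Lemma~4.4, which already packages the two-sided comparison; the constant $C$ is absolute (it absorbs the $\rho$-normalization and the loss of a factor of $2$ from picking the largest of four entries).

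The main obstacle I anticipate is bookkeeping rather than conceptual: getting the index shifts, the interval lengths, and the base points of the cocycle exactly right (i.e.\ that restriction to $[a,b]$ with true coefficients outside pairs with $S^z_{b-a+1}$ based at $T^a\omega$, not $T^{a-1}\omega$ or length $b-a$), and verifying that the boundary-coefficient replacement $\tilde\alpha_a = \beta$, $\tilde\alpha_b = \gamma$ interacts with the factorization $\CE_\omega = \CL_\omega\CM_\omega$ in the way Kr\"uger's lemmas require — in particular that $\CE_{\omega,[a,b]}^{\beta,\gamma}$ is genuinely unitary for $\beta,\gamma\in\pD$ (already noted above) so that its characteristic polynomial has the right structure. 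Once the dictionary with \cite{Krueg2013IMRN} is set up correctly, both the upper bounds and the sign-selection lower bound follow immediately from the cited statements, so I would keep the proof short and refer the reader to \cite{Krueg2013IMRN} for the underlying identities.
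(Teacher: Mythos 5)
Your proposal is correct and follows essentially the same route as the paper: the two upper bounds are read off from Kr\"uger's Corollary~3.11 and Lemma~3.12 with the index/base-point bookkeeping you describe, and the lower bound comes from the identity in (the proof of) Kr\"uger's result~4.4 expressing the $2\times 2$ matrix of the four sign-flipped polynomials $\varphi_{\omega,[a,b]}^{\pm\beta_0,\pm\gamma_0}$ as $S^{z}_{b-a+1}(T^{a}\omega)$ sandwiched between two boundary matrices that are invertible with uniformly bounded inverses (since $|z|=|\beta_0|=|\gamma_0|=1$), so the largest of the four entries dominates $C\|S^{z}_{b-a+1}(T^{a}\omega)\|$ — exactly the paper's argument.
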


\begin{proof}
The first two inequalities can be easily obtained from \cite[Corollary~3.11 and Lemma 3.12]{Krueg2013IMRN}. From the proof of \cite[Theorem 4.4]{Krueg2013IMRN}, we may find that
\begin{equation*}
    \begin{pmatrix} \varphi_{\omega, [a,b]}^{\beta, \gamma} & \varphi_{\omega, [a,b]}^{-\beta, \gamma} \\ \varphi_{\omega, [a,b]}^{\beta, -\gamma} & \varphi_{\omega, [a,b]}^{-\beta, -\gamma} \end{pmatrix}
    =
    \begin{pmatrix} z & -\bar{\gamma} \\ z & \bar{\gamma} \end{pmatrix}
    S^{z}_{b-a+1}(T^{a}\omega)
    \begin{pmatrix} 1 & 1 \\ \beta & -\beta \end{pmatrix},
\end{equation*}
which means that $\max\{ |\varphi_{\omega, [a,b]}^{\pm \beta, \pm \gamma}| \}>C\|S_{b-a+1}^{z}(T^{a}\omega)\|$.
\end{proof}

\subsection{Green's Function Estimates}{\label{aegf}}

Since $\CL_{\omega}$ is unitary, we know that $\CE_\omega \psi = z\psi$ is equivalent to $(z\CL_\omega^* - \CM_\omega)\psi = 0$. Then the associated finite-volume Green's functions are defined by
\[
G_{\omega,[a,b]}^{\beta,\gamma}(z)
=
\left(z\left[\CL_{\omega,[a,b]}^{\beta,\gamma}\right]^* - \CM_{\omega,[a,b]}^{\beta,\gamma} \right)^{-1}
\]
and
\[
G_{\omega,[a,b]}^{\beta,\gamma}(j,k;z)
=
\langle \delta_j, G_{\omega,[a,b]}^{\beta,\gamma}(z)
 \delta_k \rangle,
\quad
j,k \in [a,b].
\]
By \cite[Proposition~3.8]{Krueg2013IMRN}, for $\beta, \gamma \in \pD$, the Green's function has the following expression
\begin{equation} \label{CMVGramma}
\left| G_{\omega,[a,b]}^{\beta,\gamma}(j,k;z) \right|
=
\frac{1}{\rho_j\rho_k}
\left| \frac{\varphi_{\omega,[a,j-1]}^{\beta,\alpha_{j-1}} (z) \varphi_{\omega,[k+1,b]}^{\alpha_{k+1},\gamma}(z) }{\vspace{.5cm}\varphi_{\omega,[a,b]}^{\beta,\gamma}(z)} \right|,
\quad
 a \leq j \leq k \leq b.
\end{equation}

Now we can get the following Green's function estimate.

\begin{lemma} \label{lemma:LEandGreenEst:CMV}
Assume that for $n$ large enough, the following inequality holds true,
\begin{equation*} \label{eq:normupbound}
\frac{1}{n} \log\|M_n^z(\omega) \|
\geq
L_{n}(z) - \e.
\end{equation*}
Then we have, for any $\beta_{0},\gamma_{0}\in \pD$, there exist $\beta \in \{\beta_{0}, -\beta_{0}\}$, $\gamma \in \{\gamma_0, -\gamma_0\}$, such that
\begin{equation} \label{eq:greenfnest1:CMV}
\left| G^{\beta,\gamma}_{\omega,[0,n)}(j,k;z) \right|
\leq e^{-|j-k|L_{n}(z) + C \e n}
\end{equation}
for all $j,k \in [0,n)$, $z \in \pD \setminus \sigma(\CE_{\omega , [0,n)}^{\beta,\gamma})$.
\end{lemma}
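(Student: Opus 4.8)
The plan is to combine the Green's function identity \eqref{CMVGramma} with the transfer matrix bounds from Corollary~\ref{coroupperbound}. The identity expresses $|G^{\beta,\gamma}_{\omega,[0,n)}(j,k;z)|$ (for $0 \le j \le k < n$) as
$$\frac{1}{\rho_j \rho_k} \left| \frac{\varphi^{\beta,\alpha_{j-1}}_{\omega,[0,j-1]}(z)\, \varphi^{\alpha_{k+1},\gamma}_{\omega,[k+1,n)}(z)}{\varphi^{\beta,\gamma}_{\omega,[0,n)}(z)} \right|,$$
so the numerator is controlled by the two one-sided inequalities in Corollary~\ref{coroupperbound}, namely $|\varphi^{\beta,\alpha_{j-1}}_{\omega,[0,j-1]}(z)| \le \sqrt 2 \|S^z_j(\omega)\|$ and $|\varphi^{\alpha_{k+1},\gamma}_{\omega,[k+1,n)}(z)| \le \sqrt 2 \|S^z_{n-1-k}(T^{k+1}\omega)\|$, while the denominator is bounded \emph{below} by $C\|S^z_n(\omega)\|$ after the sign choice $\beta \in \{\beta_0,-\beta_0\}$, $\gamma \in \{\gamma_0,-\gamma_0\}$ supplied by the same corollary. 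The $\rho_j \rho_k$ prefactor is harmless because assumption \eqref{e.sampfunctassump} keeps $\log \rho$ integrable and, more to the point here, a single factor $\rho_j^{-1}$ contributes at most an exponentially small-in-$\e n$ correction once we note that $|\varphi^{\beta,\alpha_{j-1}}|$ already has the norm bound; in practice one folds $\rho_j^{-1}$ into the adjacent transfer matrix using $S^z(x) = \rho(x)^{-1}(\cdots)$, so no separate estimate is needed.

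Next I would pass from transfer-matrix \emph{norms} to the Lyapunov exponent. By subadditivity \eqref{mn1n2} and the telescoping $M^z_n(\omega) = M^z_{n-1-k}(T^{k+1}\omega)\, M^z_{k-j+1}(T^j\omega)\, M^z_j(\omega)$ (up to the $\sqrt z$ normalization relating $S^z$ and $M^z$, which only changes norms by a bounded factor since $|z|=1$), we get
$$\|M^z_j(\omega)\| \cdot \|M^z_{n-1-k}(T^{k+1}\omega)\| \le \|M^z_n(\omega)\| \cdot \|M^z_{k-j+1}(T^j\omega)\|.$$
Therefore the quotient in \eqref{CMVGramma} is bounded by $C \|M^z_{k-j+1}(T^j\omega)\| / \|M^z_n(\omega)\|^{?}$ — more precisely, rearranging, the product of the two numerator norms divided by the denominator norm is at most $\|M^z_{k-j+1}(T^j\omega)\|$. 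Wait — that would not decay. The correct bookkeeping is: write $\|M^z_j\| \|M^z_{n-1-k}\| = \dfrac{\|M^z_j\|\|M^z_{n-1-k}\|}{\|M^z_n\|}\|M^z_n\|$ and use that $\|M^z_j\|\|M^z_{n-1-k}\| \le C\|M^z_n\|$ while also $\|M^z_n\| \le \|M^z_j\|\|M^z_{k-j+1}(T^j\omega)\|\|M^z_{n-1-k}(T^{k+1}\omega)\|$; combining, the ratio is bounded by $C/\|M^z_{k-j+1}(T^j\omega)\|$ — still the wrong direction. The resolution is the standard one: one does \emph{not} bound the middle block by subadditivity but rather uses the lower bound on the denominator together with \emph{upper} bounds on the two numerator factors that are $e^{jL}$ and $e^{(n-1-k)L}$, and the denominator lower bound that is $e^{nL}$; then the exponent is $jL + (n-1-k)L - nL = -(k-j+1)L = -|k-j|L - L \le -|k-j|L$, and all the error terms $C\e n$ accumulate from the three applications of the LDT-type estimate $\tfrac1m \log\|M^z_m\| = L_m + O(\e)$. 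This is exactly where the hypothesis $\tfrac1n\log\|M^z_n(\omega)\| \ge L_n(z) - \e$ enters for the denominator, while for the two numerator factors one uses the a priori deterministic upper bound from Lemma~\ref{lub} (valid for all phases, hence in particular at $T^a\omega$) — or, if one wants to avoid assuming $\omega \in \mathrm{DC}$ here, one uses the crude bound $\tfrac1m\log\|M^z_m(x)\| \le L_m(z) + C$ which already suffices after replacing $L_n$ by $L_n$ throughout and noting $L_m \to L$. Finally, replace each $L_m(z)$ by $L_n(z)$ using $|L_m(z) - L_n(z)| \le C\e$ for $m$ comparable to $n$ and $m,n$ large, absorbing everything into $C\e n$.

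The main obstacle, and the step requiring the most care, is the last one: making the error terms genuinely $O(\e n)$ uniformly in $j,k$. When $|j-k|$ is small — comparable to $n$ or not — the numerator factors $\varphi^{\beta,\alpha_{j-1}}_{\omega,[0,j-1]}$ and $\varphi^{\alpha_{k+1},\gamma}_{\omega,[k+1,n)}$ involve transfer matrices of length $j$ and $n-1-k$, which may be much smaller than $n$; for such short blocks $\tfrac1m\log\|M^z_m\|$ need not be close to $L$, only bounded by $L + C$. One has to check that the resulting additive loss is still $\le C\e n$, which is true because the \emph{lengths} of the short blocks are what multiply the $O(1)$ discrepancy, and those lengths are $\le n$; but the clean way is to only claim the upper bound $e^{-|j-k|L_n(z) + C\e n}$ with $L_n(z)$ (not $L(z)$), precisely as in the statement, so that for short blocks the inequality is vacuous or nearly so and no contradiction arises. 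A secondary technical point is keeping track of the normalization between $S^z_m$ (determinant $z^m$) and $M^z_m$ (determinant $1$): since $|z|=1$, $\|S^z_m(x)\| = \|M^z_m(x)\|$ up to a factor $|z|^{m/2}=1$ arising from the diagonal rescaling by $\sqrt z$, so this is genuinely harmless, but it should be stated. Modulo these bookkeeping issues, the proof is a direct substitution: plug Corollary~\ref{coroupperbound} into \eqref{CMVGramma}, apply the norm-to-Lyapunov-exponent estimates, and collect errors.
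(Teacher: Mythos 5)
Your proposal is correct and follows essentially the same route as the paper's (very terse) proof: substitute the bounds of Corollary~\ref{coroupperbound} into the Green's function formula \eqref{CMVGramma}, bound the numerator via Lemma~\ref{lub}, bound the denominator from below via the hypothesis together with the sign choice $\beta\in\{\beta_0,-\beta_0\}$, $\gamma\in\{\gamma_0,-\gamma_0\}$, and absorb the bounded prefactor $1/(\rho_j\rho_k)$ and the $L_m$-versus-$L_n$ discrepancies into $C\e n$. Two small caveats, both concerning details the paper also leaves implicit: the aside that the crude bound $\frac1m\log\|M^z_m\|\le L_m(z)+C$ would "already suffice" is not right for blocks of length comparable to $n$ (that loss is $O(n)$, not $O(\e n)$), and the short-block bookkeeping is cleanest via a fixed threshold $N_0(\e)$ — using $L_m(z)\le L(z)+\e\le L_n(z)+\e$ for $m\ge N_0(\e)$ and an $O_\e(1)$ total contribution for $m<N_0(\e)$ — rather than the "lengths are $\le n$" remark or the claim that the estimate is nearly vacuous for small $|j-k|$.
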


\begin{proof}
Noticing that $\|S^{z}_{n}(\omega)\|=\|M^{z}_{n}(\omega)\|$, Lemma~\ref{lub} and the first two inequalities of Corollary~\ref{coroupperbound} allow us to bound the numerator in equation \eqref{CMVGramma} from above, while the assumption and the last inequality of Corollary~\ref{coroupperbound} allow us to estimate the denominator from below. Combining these observations with the boundedness of $\frac{1}{\rho_{j}\rho_{k}}$, the desired conclusion follows.
\end{proof}

\section{Elimination of Double Resonances and Semi-Algebraic Sets}

In this section we prove a statement that is usually referred to as the {\it elimination of double resonances}. From the proof of \cite{BG00}, one may see that the uniform positivity of and a uniform LDT for the Lyapunov exponent, together with the elimination of double resonances, imply Anderson localization.

\subsection{Elimination of Double Resonances at One Point}

For the restriction $H_{[0,n-1]}$ of a Schr\"odinger operator $H$, since it is self-adjoint, we have
\begin{equation*}
   \text{ dist$(E, \sigma(H_{[0,n-1]})) = \| (E - H_{[0,n-1]})^{-1} \|^{-1}$ for $E \notin \sigma(H_{[0,n-1]})$, }
\end{equation*}
which is necessary for the elimination of double resonances in Schr\"odinger case. Actually, this formula holds true for normal operators, and in particular for the unitary operators $\CE^{\beta, \gamma}_{\omega, [a, b]}$ (this matrix is unitary whenever $\beta, \gamma \in \pD$ by Section~\ref{section3}).

Consequently, we can get the following CMV analog of \cite[Lemma 4.1]{BG00}, where the constant $C(\alpha)$ appearing in it is defined by \begin{equation}\label{e.cofalphadef}
C(\alpha) = \sup_{n \in \Z_+} \left( n e^{n C_\alpha} \right)^{1/n}
\end{equation}
with $C_\alpha$ from \eqref{e.calphadef}.

\begin{lemma}{\label{lee}}
    Fix $x_0 \in \T$, $\beta, \gamma \in \pD$, and $\kappa > 0$, and assume that $\log \log \bar{n} \ll \log n$ {\rm (}so that $\bar{n}^{2}e^{-(c_{1}/C_{\alpha}^{3})\kappa ^{3}n} < e^{-\frac{1}{2}(c_{1}/C_{\alpha}^{3})\kappa ^{3}n}${\rm )}. Denote by $S \subset \mathbb{T} \times \mathbb{T}$ the set of $(\omega, x)$ such that
    \begin{equation}{\label{dcac}}
      \omega \in \mathrm{DC};
    \end{equation}
    \begin{equation}{\label{cee}}
        \text{there are $n_{0}< \bar{n}$ and $z \in \partial \mathbb{D}$ such that}
    \end{equation}
    \begin{equation}{\label{iee1}}
        \lVert (z-\CE^{\beta, \gamma}_{\omega, [0, n_{0}-1]}(x_0))^{-1} \rVert > C(\alpha)^{n}
    \end{equation}
and
    \begin{equation}{\label{iee2}}
        \frac{1}{n}\log \lVert M^{z}_{n}(x) \rVert <L_{n}(z) - \kappa.
    \end{equation}
Then
    \begin{equation}{\label{mesdr}}
       \text{ {\rm mes} $S < e^{-\frac{1}{16}(c_{1}/C_{\alpha}^{3})\kappa ^{3}n}$}.
    \end{equation}
\end{lemma}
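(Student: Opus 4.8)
The plan is to bound ${\rm mes}\,S$ by Fubini, estimating the slice $S_\omega=\{x\in\T:(\omega,x)\in S\}$ for each $\omega$ and then integrating. By \eqref{dcac}, $S_\omega=\emptyset$ unless $\omega\in\mathrm{DC}$, and for such $\omega$ we have $\beta(\omega)=0$ by the remark after Definition~\ref{def.DC}. So fix $\omega\in\mathrm{DC}$ and recall that $x_0$ is fixed. For $\beta,\gamma\in\pD$ each matrix $\CE^{\beta,\gamma}_{\omega,[0,n_0-1]}(x_0)$ is unitary, hence normal, so $\|(z-\CE^{\beta,\gamma}_{\omega,[0,n_0-1]}(x_0))^{-1}\|=\dist\bigl(z,\sigma(\CE^{\beta,\gamma}_{\omega,[0,n_0-1]}(x_0))\bigr)^{-1}$; thus \eqref{iee1} says precisely that $z$ lies within $C(\alpha)^{-n}$ of an eigenvalue of one of these finitely many matrices. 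I collect all those eigenvalues, over $n_0<\bar n$, into a finite set $\{z_1,\dots,z_M\}\subset\pD$ (depending on $\omega$ and $x_0$); since $\CE^{\beta,\gamma}_{\omega,[0,n_0-1]}(x_0)$ has $n_0$ eigenvalues, $M\le\bar n^2$.

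The next step is to record the regularity of the cocycle in the spectral variable. Parametrizing $z=e^{i\phi}\in\pD$, for real $x$ the entries of $M^z(x)$ are bounded real-analytic functions of $\phi$, and the computation in the proof of Lemma~\ref{lldt} gives $\|M^z(x)\|\le e^{C_\alpha}$, while differentiating the explicit $2\times2$ formula for $M^z(x)$ gives a comparable bound on $\partial_\phi M^z(x)$. By the Leibniz rule, $\|M^z_n(x)\|\le e^{nC_\alpha}$ and $\|\partial_\phi M^z_n(x)\|\le n\,e^{nC_\alpha}$; since $M^z(x)\in\SU(1,1)$ forces $\|M^z_n(x)\|\ge1$, it follows that both $z\mapsto\frac1n\log\|M^z_n(x)\|$ and $z\mapsto L_n(z)$ are Lipschitz on $\pD$ with constant at most $C\,e^{nC_\alpha}$ for some absolute $C$. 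This is exactly where the threshold in \eqref{iee1} enters: by the definition \eqref{e.cofalphadef} of $C(\alpha)$ we have $C(\alpha)^n\ge n\,e^{nC_\alpha}$, so $C\,e^{nC_\alpha}\cdot C(\alpha)^{-n}\le C/n<\kappa/4$ once $n>4C/\kappa$ — a resonance of width $C(\alpha)^{-n}$ is automatically fine enough for the Lipschitz transfer.

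Now combine. If $(\omega,x)\in S$ with $\omega\in\mathrm{DC}$, take $n_0<\bar n$ and $z\in\pD$ witnessing \eqref{iee1}--\eqref{iee2} and pick $z_j$ with $|z-z_j|<C(\alpha)^{-n}$. Applying the two Lipschitz bounds to \eqref{iee2} gives $\frac1n\log\|M^{z_j}_n(x)\|<L_n(z_j)-\kappa/2$, so $S_\omega\subseteq\bigcup_{j=1}^{M}\{x\in\T:\frac1n\log\|M^{z_j}_n(x)\|<L_n(z_j)-\kappa/2\}$. Since $\beta(\omega)=0$ and $n$ is large, Lemma~\ref{lldt} (with $\kappa/2$ in place of $\kappa$) bounds each set in this union by $e^{-(c_1/C_\alpha^3)(\kappa/2)^3n}=e^{-\frac18(c_1/C_\alpha^3)\kappa^3n}$, \emph{uniformly in the point $z_j\in\pD$}, hence uniformly in $\omega$. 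Therefore ${\rm mes}\,S_\omega\le\bar n^2 e^{-\frac18(c_1/C_\alpha^3)\kappa^3n}$ for every $\omega\in\mathrm{DC}$, and integrating over $\omega\in\T$ while using $\log\log\bar n\ll\log n$ to absorb $\bar n^2$ into half of the exponent yields ${\rm mes}\,S\le e^{-\frac1{16}(c_1/C_\alpha^3)\kappa^3n}$, which is \eqref{mesdr}.

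The argument is short once these ingredients are assembled, and the only genuine point — the reason this is a lemma rather than a triviality — is the interplay between the resonance width in \eqref{iee1} and the $z$-regularity of the transfer matrices: one must (i) use that the exceptional-set bound of Lemma~\ref{lldt} is uniform over all $z\in\pD$, so that it survives the $\omega$-dependence of the eigenvalues $z_j$, and (ii) exploit the definition \eqref{e.cofalphadef} of $C(\alpha)$ so that the $C(\alpha)^{-n}$-neighborhoods delivered by \eqref{iee1} lie below the $e^{nC_\alpha}$-Lipschitz scale of the cocycle — any cruder threshold would break the transfer of \eqref{iee2} from $z$ to $z_j$. Measurability of $S$, Fubini, and the arithmetic of constants are routine, and, as in \cite[Lemma~4.1]{BG00}, no use is made here of positivity of the Lyapunov exponent.
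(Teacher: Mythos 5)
Your proposal is correct and follows essentially the same route as the paper: use unitarity (normality) of $\CE^{\beta,\gamma}_{\omega,[0,n_0-1]}(x_0)$ to turn \eqref{iee1} into closeness of $z$ to one of at most $\bar n^2$ eigenvalues, transfer \eqref{iee2} from $z$ to that eigenvalue using the $z$-regularity of $M^z_n$ at scale $C(\alpha)^{-n}$ (the paper phrases this via $\|M^z_n-M^{z_1}_n\|\le ne^{nC_\alpha}|z-z_1|\le 1$ and $|\log a-\log b|\le|a-b|$, you via an equivalent Lipschitz bound), then apply the LDT of Lemma~\ref{lldt} at deviation $\kappa/2$ to each eigenvalue and absorb the factor $\bar n^2$ using $\log\log\bar n\ll\log n$. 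The differences are only cosmetic.
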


\begin{proof}
For an $\omega \in \mathrm{DC}$ and $\beta, \gamma \in \pD$ as fixed above, denote
\begin{equation*}
    \Lambda_{\omega}^{\beta, \gamma} = \bigcup_{n_{0}<\bar{n}} \sigma(\mathcal{E}^{\beta, \gamma}_{\omega, [0, n_{0}-1]}(x_{0})))
\end{equation*}
and
\begin{equation*}
    \Theta_{\omega}^{\beta, \gamma} = \Big \{ x \in \mathbb{T} : \max_{z_{1}\in \Lambda^{\beta, \gamma}_{\omega}} \Big | \frac{1}{n} \log \| M^{z_{1}}_{n}(x) \|  - L_{n}(z_{1})\Big | > \frac{\kappa}{2} \Big  \}.
\end{equation*}
Then, by Lemma~\ref{lldt}, we have
\begin{equation*}
    \text{{\rm mes}  $\Theta_{\omega}^{\beta, \gamma} < \bar{n}^{2}e^{- \frac18 (c_{1}/C_{\alpha}^{3})\kappa ^{3}n} < e^{-\frac{1}{16}(c_{1}/C_{\alpha}^{3})\kappa ^{3}n}$}.
\end{equation*}
Hence \eqref{mesdr} will follow from the fact that
\begin{equation}{\label{ssub}}
    S \subset \{ (\omega, x) \in \mathbb{T} \times \mathbb{T} : \omega \in \mathrm{DC}, \; x \in \Theta_{\omega}^{\beta, \gamma} \}.
\end{equation}

Assume \eqref{iee1} and \eqref{iee2} hold for some $z \in \partial\mathbb{D}$. Since $\CE^{\beta, \gamma}_{\omega, [0, n_{0}-1]}(x_{0})$ is unitary,
\begin{equation*}
    {\rm dist}(z, \sigma(\mathcal{E}^{\beta, \gamma}_{\omega, [0, n_{0}-1]}(x_{0}))) =   \| (z-\mathcal{E}^{\beta, \gamma}_{\omega, [0, n_{0}-1]}(x_{0}))^{-1} \|^{-1},
\end{equation*}
which means that there exists $z_{1} \in \sigma(\mathcal{E}^{\beta, \gamma}_{\omega, [0, n_{0}-1]}(x_{0}))$ such that $|z-z_{1}| < C(\alpha)^{-n}$, and hence, by \eqref{e.cofalphadef},

$$
    \| M_{n}^{z}(x) - M^{z_{1}}_{n}(x) \| \leq ne^{nC_\alpha} |z-z_{1}| < ne^{nC_\alpha} C(\alpha)^{-n} \le 1
$$
for every $x \in \T$.

It follows from the fact $|\log a - \log b| \leq |a - b|$ ($a, b \geq 1$) that
\begin{equation*}
   \Big |\log \|M^{z}_{n}(x)\| - \log \| M^{z_{1}}_{n}(x) \| \Big| < 1
\end{equation*}
and
\begin{equation*}
    |L_{n}(z) - L_{n}(z_{1})| < \frac{1}{n}.
\end{equation*}
Therefore, for $n$ large enough,
\begin{equation*}
    \begin{split}
        \frac{1}{n} \log \| M^{z_{1}}_{n}(x) \|
        &< \frac{1}{n} \log \| M^{z}_{n}(x) \| + \frac{1}{n} \\
        &< L_{n}(z) - \kappa + \frac{\kappa}{4} \\
        &< L_{n}(z_{1}) - \frac{1}{2} \kappa,
    \end{split}
\end{equation*}
and hence $x \in \Theta_{\omega}^{\beta, \gamma}$, which implies \eqref{ssub}.
\end{proof}

\subsection{Semi-Algebraic Sets}

In order to get the elimination of double resonances along the orbit $\{x+l\omega\}$, we need complexity bounds for semi-algebraic sets.

We denote the Hilbert-Schmidt norm of a matrix $B$ as
\begin{equation*}
    \| B \|_{HS} = \Big (\sum_{i, j} |B_{ij}|^{2} \Big )^{1/2}.
\end{equation*}

In \cite[Section~5]{BG00}, Bourgain and Goldstein, considering Schr\"odinger operators with trigonometric polynomial potentials, reformulate the two resonance conditions (\cite[(4.4) and (4.5)]{BG00}) as polynomial inequalities in $(\cos\omega, \sin\omega, \cos\theta, \sin\theta, E)$ and get an upper bound for the number of connected components of the exceptional set $S$.

In the case of general real analytic functions, one can replace them by the truncation of their Fourier expansion of degree $< n^{2}$, with error $< e^{-cn^{2}}$ (this is because the Fourier coefficients of an analytic function decay exponentially), which is sufficient for the measure estimate of $S$.

In our CMV case, from the proof of Lemma~\ref{lldt}, we know that the condition \eqref{iee2} is equivalent to the following one,
\begin{equation*}
     \frac{1}{n}\log \lVert A_{n}(\omega, x_{0}+l\omega, z) \rVert < \tilde{L}_{n}(\omega, z) - \kappa,
\end{equation*}
which can be dealt with as the real analytic case.

For the condition \eqref{iee1}, its left-hand side is a polynomial in $(\Re \, {\alpha(x)}, \Im \, {\alpha(x)}, \Re \, {z}, \Im \, {z})$, which also can be dealt with by the above method since $\Re \, {\alpha(x)}$ and $\Im \, {\alpha(x)}$ are real analytic.

More precisely, condition \eqref{iee1} can be replaced by
\begin{equation*}
    \|(z-\mathcal{E}^{\beta, \gamma}_{\omega,[0, n_{0}-1]}(x_{0}))^{-1}\|_{HS}^{2} > C^{2n},
\end{equation*}
and hence
\begin{equation*}
    \begin{split}
        \sum_{1\leq n_{1}, n_{2}\leq n_{0}} & |\det[(n_{1}, n_{2}) - \text{minor of } (z - \mathcal{E}^{\beta, \gamma}_{\omega,[0, n_{0}-1]}(x_{0}))]|^{2} \\
        &> C^{2n}|\det(z - \mathcal{E}^{\beta, \gamma}_{\omega,[0, n_{0}-1]}(x_{0}))|^{2},
    \end{split}
\end{equation*}
which is of the form
\begin{equation}{\label{iee1r}}
    P_{1}(\cos\omega, \sin\omega, \Re \, {z}, \Im \, {z}) > 0,
\end{equation}
where $P_{1}(\cos\omega, \sin\omega, \Re \, {z}, \Im \, {z})$ is a polynomial of degree at most $Cn_{0}^{2}n^{2}$ in $\cos\omega, \sin\omega$ and at most $Cn_{0}$ in $\Re \, {z}, \Im \, {z}$.

Considering condition \eqref{iee2}, we can replace $L_{n}(\omega, z)$ by using Lemma \ref{lar}. Taking $J = [n^{2A}]$, it follows from \eqref{ear} that condition \eqref{iee2} is equivalent to
\begin{equation*}
    \frac{1}{n}\log \lVert M_{n}(\omega, x , z) \rVert < \frac{1}{nJ}\sum_{1}^{J} \log \|M_{n}(\omega, x_{0}+j\omega, z)\| - C\kappa,
\end{equation*}
and hence
\begin{equation*}
    \| M_{n}(\omega, x , z) \|_{HS}^{2J} < C^{J} e^{-C n \kappa J} \prod_{1}^{J} \| M_{n}(\omega, x_{0}+j\omega, z) \|^{2}_{HS}.
\end{equation*}
This condition is of the form
\begin{equation}{\label{iee2r}}
    P_{2}(\cos\omega, \sin\omega, \cos x, \sin x, \Re \, {z}, \Im \, {z}) > 0,
\end{equation}
where $P_{2}(\cos\omega, \sin\omega, \cos x, \sin x, \Re \, {z}, \Im \, {z})$ is a polynomial of degree at most $n^{C_{A}}$.

Similarly to \cite[Lemma 5.13]{BG00}, we have

\begin{lemma}\label{lsemi}
In Lemma~\ref{lee}, take $\bar{n} < n^{C}$ and replace conditions \eqref{iee1}, \eqref{iee2} by \eqref{iee1r}, \eqref{iee2r}. Then, for fixed $x_{0}$, the set of $\omega$ satisfying \eqref{dcac} and \eqref{cee} is a union of at most $n^{C}$ intervals.
\end{lemma}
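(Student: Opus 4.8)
The plan is to follow the Bourgain--Goldstein strategy in \cite[Section~5]{BG00}, reducing everything to a counting statement for semi-algebraic sets via the Yomdin--Gromov type bound. The key point is that conditions \eqref{iee1r} and \eqref{iee2r}, together with the Diophantine condition \eqref{dcac}, carve out a subset of the $\omega$-line that can be described by polynomial inequalities of controlled degree, and a semi-algebraic set in $\R$ (or the circle) defined by polynomials of degree $D$ has at most $O(D)$ connected components, each an interval. So the bulk of the proof is bookkeeping on degrees.

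First I would fix $x_0$ and unwind what conditions define the exceptional $\omega$-set. Condition \eqref{iee1r} was already shown above to be of the form $P_1(\cos\omega,\sin\omega,\Re z,\Im z)>0$ with $\deg_{\cos\omega,\sin\omega}P_1\le Cn_0^2 n^2$ and $\deg_{\Re z,\Im z}P_1\le Cn_0$; since $n_0<\bar n<n^C$, this is a polynomial of degree $\le n^{C'}$ in all variables jointly. Condition \eqref{iee2r} is, by the discussion preceding the lemma (using Lemma~\ref{lar} to replace $L_n(z)$ by a Birkhoff average over $J=[n^{2A}]$ points, then clearing denominators and passing to Hilbert--Schmidt norms squared so everything is polynomial), of the form $P_2(\cos\omega,\sin\omega,\cos x,\sin x,\Re z,\Im z)>0$ with $\deg P_2\le n^{C_A}$; here one also uses that the entries of $A^z_n$ and of the minors of $z-\CE^{\beta,\gamma}_{\omega,[0,n_0-1]}$ are, after the Fourier-truncation-to-degree-$<n^2$ approximation described in the excerpt (with error $<e^{-cn^2}$, absorbed harmlessly), genuine polynomials in $(\cos\omega,\sin\omega,\cos x,\sin x,\Re z,\Im z)$. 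The Diophantine condition \eqref{dcac} only needs to be invoked insofar as Lemma~\ref{lar} requires it; I would simply intersect with $\mathrm{DC}$ at the end, which does not increase the component count of the piece we are bounding (we are bounding the whole set, DC or not, from above).

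Next I would note that $x$ and $z$ are not free parameters to be counted over: we want the set of $\omega$ for which \emph{there exist} $n_0<\bar n$ and $z\in\pD$ with \eqref{iee1r} and \eqref{iee2r} holding at $x=x_0+l\omega$ (the relevant phase along the orbit). For each fixed $n_0$, the set
\[
\{(\omega,z)\in\T\times\pD : P_1>0 \text{ and } P_2(\cos\omega,\sin\omega,\cos(x_0+l\omega),\sin(x_0+l\omega),\Re z,\Im z)>0\}
\]
is semi-algebraic of degree $\le n^{C}$ in $(\cos\omega,\sin\omega,\cos l\omega,\sin l\omega,\Re z,\Im z)$ — the substitution $x=x_0+l\omega$ only multiplies the degree by $l\le\bar n<n^C$ after expressing $\cos(x_0+l\omega),\sin(x_0+l\omega)$ as polynomials in $\cos\omega,\sin\omega$ of degree $l$ — hence of degree $\le n^{C''}$. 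Projecting onto the $\omega$-axis (eliminating $\Re z,\Im z$, i.e. the ``there exists $z$'' quantifier) is again a semi-algebraic operation that preserves the degree bound up to a polynomial factor, by the Tarski--Seidenberg quantifier-elimination complexity estimates used in \cite[Section~9]{BG00}. Taking the union over the $<\bar n<n^C$ values of $n_0$ multiplies the number of components by at most $n^C$. The upshot: the $\omega$-set in question is semi-algebraic in $(\cos\omega,\sin\omega)$ of degree $\le n^{C}$, hence a union of at most $n^C$ intervals.

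The main obstacle, and the step deserving the most care, is the degree accounting in passing from the ``analytic'' conditions \eqref{iee1}, \eqref{iee2} to the polynomial conditions \eqref{iee1r}, \eqref{iee2r}, and in particular controlling the error introduced by Fourier truncation so that it is genuinely negligible against the measure bound $e^{-\frac{1}{16}(c_1/C_\alpha^3)\kappa^3 n}$ from Lemma~\ref{lee} — one must truncate at degree $<n^2$ (error $<e^{-cn^2}$) precisely so that the truncation error is smaller than any relevant quantity at scale $\kappa^3 n$. A secondary subtlety is that the substitution $x=x_0+l\omega$ is legitimate only because we are working along the orbit of a single point; the relevant range of $l$ is $|l|<\bar n$, which is why $\bar n<n^C$ is imposed in the hypothesis. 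Modulo these points, the argument is a direct transcription of \cite[Lemma~5.13]{BG00}, with $\CE^{\beta,\gamma}_{\omega,[0,n_0-1]}$ and the Szeg\H o cocycle $M^z_n$ playing the roles of the Dirichlet restriction $H_{[0,n_0-1]}$ and the Schr\"odinger transfer matrix, and with $C_\alpha$ replacing the quantity $C_v$ of the Schr\"odinger setting.
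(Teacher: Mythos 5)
Your degree bookkeeping follows the right (Bourgain--Goldstein) template, but you have misread what the lemma asserts, and the step where you substitute $x=x_0+l\omega$ is exactly the step that cannot be made. Lemma~\ref{lsemi} is a statement about the \emph{sections} $S_x$ of the set $S$ from Lemma~\ref{lee}: for each \emph{fixed} phase $x$ (so that $\cos x,\sin x$ enter $P_2$ as constants), the set of $\omega$ cut out by \eqref{dcac} and \eqref{cee} (with \eqref{iee1r}, \eqref{iee2r}) is a union of at most $n^C$ intervals; this is precisely how it is invoked in the proof of Lemma~\ref{ll} (``each of the sections $S_x$ of $S$ is a union of at most $n^C$ intervals''). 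There is no parameter $l$ anywhere in Lemma~\ref{lee} or Lemma~\ref{lsemi}, and your bound ``$l\le\bar n<n^C$'' is unfounded: in the actual application the orbit index satisfies $2^{(\log n)^2}\le l\le 2^{(\log n)^3}$, which is super-polynomial in $n$. If you Chebyshev-expand $\cos(x_0+l\omega),\sin(x_0+l\omega)$ as polynomials in $\cos\omega,\sin\omega$, the degree --- and hence the interval count --- picks up a factor of order $l$, so your conclusion would be ``at most $l\,n^C$ intervals,'' not $n^C$. That is fatal downstream: Proposition~\ref{pfe} is applied with $N\sim l$ and yields the error term $MN^{-1}$, which with $M\sim l\,n^C$ is of size $n^C$ rather than small. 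The whole point of the BG00 scheme (which the paper adopts by citing \cite[Lemma~5.13]{BG00} without proof) is that the semi-algebraic complexity bound is established for fixed $x$, \emph{uniformly} in $x$ and independently of $l$, and the passage to the orbit $x=x_0+l\omega$ with huge $l$ is handled only afterwards by the measure-theoretic transversality argument of Proposition~\ref{pfe}, which uses nothing beyond ${\rm mes}\,S$ and the $l$-independent section complexity.

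With that substitution step deleted and $x$ held fixed, the rest of your argument is essentially the intended proof: truncate the Fourier series at degree $<n^2$ with error $<e^{-cn^2}$, record that $P_1$ has degree at most $Cn_0^2n^2$ in $(\cos\omega,\sin\omega)$ and $Cn_0$ in $(\Re z,\Im z)$ while $P_2$ has degree at most $n^{C_A}$ (the Birkhoff average over $J=[n^{2A}]$ orbit points is harmless since $J$ is polynomial in $n$), eliminate the existential quantifier over $z\in\pD$ by Tarski--Seidenberg with effective complexity bounds, and take the union over $n_0<\bar n$. Note that the hypothesis $\bar n<n^C$ is there to control the number of values of $n_0$ and the degree of $P_1$, not to bound any $l$. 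A further, minor, shared imprecision: the literal intersection with the Diophantine set \eqref{dcac} is not a finite union of intervals (DC is totally disconnected); as in BG00 one should either use a truncated arithmetic condition of polynomial complexity or note that it suffices to bound the semi-algebraic superset defined by the polynomial conditions alone --- but this is an issue inherited from the paper's formulation, not the gap in your argument.
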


Recall the following general frequency estimate \cite[Lemma 6.1]{BG00}:

\begin{prop}{\label{pfe}}
Let $S \subset \mathbb{T} \times \mathbb{T}$ be a set with the following property:
\begin{equation*}
\text{For each $x \in \mathbb{T}$, the section $S_{x}=\{ \omega \in \mathbb{T} : (\omega, x) \in S \}$ is a union of at most M intervals.}
\end{equation*}
Then, for $N$ sufficiently large, we have
\begin{equation*}
{\rm mes} \left\{ \omega \in [0,1] : (\omega, x_{0}+l\omega) \in S \text{ for some } N \le l \le 2N-1 \right\} \leq N^{3}({\rm mes} \, S)^{1/2} + MN^{-1}.
\end{equation*}
\end{prop}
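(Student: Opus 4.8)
The plan is to follow the classical Bourgain–Goldstein frequency argument, which reduces the estimate to a Fubini argument on the two-torus combined with a counting bound on the intersection of the line $\omega \mapsto (\omega, x_0 + l\omega)$ with $S$. First I would fix $x_0$ and, for each $N \le l \le 2N-1$, consider the map $\Psi_l : \T \to \T\times\T$, $\omega \mapsto (\omega, x_0 + l\omega)$. The key point is that $\Psi_l$ parametrizes a line of slope $l$ on the torus, and the preimage $\Psi_l^{-1}(S)$ is controlled by the structure of the sections $S_x$. Since each $S_x$ is a union of at most $M$ intervals, one shows that for most $\omega$ the point $(\omega, x_0 + l\omega)$ avoids $S$ unless $\omega$ lies in a small exceptional set.

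Next I would split the bound into two regimes according to whether the ``bad'' $\omega$ is caused by $S$ being large in measure or by an accidental alignment of the line with the boundary structure of $S$. For the first regime, I would integrate: by Fubini, $\int_\T \mathrm{mes}(S_x)\,dx = \mathrm{mes}\,S$, so the set of $x$ with $\mathrm{mes}(S_x) > (\mathrm{mes}\,S)^{1/2}$ has measure at most $(\mathrm{mes}\,S)^{1/2}$. For $x$ outside this set, $S_x$ is a union of at most $M$ intervals of total length at most $(\mathrm{mes}\,S)^{1/2}$. Pulling this back along the lines $\Psi_l$ and summing over the $N$ values of $l$, one gets a contribution bounded by roughly $N \cdot N \cdot (\mathrm{mes}\,S)^{1/2}$ after accounting for how many times each fiber direction is hit — this is where the factor $N^3$ (with room to spare) comes from, combining the number of lines, the number of interval endpoints, and the spacing $N^{-1}$ of the relevant rationals. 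For the second regime, the $M$ endpoints of the intervals in each $S_x$, as $x$ varies, trace out at most $M$ curves; a line of slope $l$ with $l \in [N, 2N-1]$ meets each such curve in a controlled number of points, and the measure of $\omega$ for which $(\omega, x_0+l\omega)$ lands within distance $N^{-1}$ of the swept-out region contributes the $MN^{-1}$ term. I would then take the union over $N \le l \le 2N-1$.

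The main obstacle — and the step requiring the most care — is the passage from ``the sections $S_x$ are unions of $M$ intervals'' to a quantitative bound on $\mathrm{mes}\{\omega : (\omega, x_0 + l\omega) \in S\}$ that is uniform in $l$ and summable over $l \in [N, 2N-1]$. The subtlety is that the same point of $S$ can be hit by lines $\Psi_l$ of many different slopes, so naive summation over $l$ could lose a factor of $N$; one must use the fact that distinct slopes in $[N, 2N-1]$ give lines whose intersections with a fixed short horizontal interval are well separated in $\omega$ (spacing comparable to $N^{-1}$ within a fiber), which is exactly what prevents the overcounting and forces the $N^{-1}$ in the second term. Once this separation is in hand, the two contributions add up to the claimed bound $N^3(\mathrm{mes}\,S)^{1/2} + MN^{-1}$; I would remark that we do not need the sharp exponents here, only that the first term is a small power of $N$ times $(\mathrm{mes}\,S)^{1/2}$ and the second is $MN^{-1}$, since in the application $\mathrm{mes}\,S$ is exponentially small and $M$ is only polynomially large. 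As this is verbatim \cite[Lemma~6.1]{BG00} and involves no CMV-specific input, I would in fact simply cite it, but the sketch above records the mechanism for the reader's convenience.
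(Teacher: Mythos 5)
Your proposal matches the paper exactly: the paper gives no proof of Proposition~\ref{pfe} either, but simply recalls it as the general frequency estimate of Bourgain--Goldstein \cite[Lemma~6.1]{BG00}, which is precisely the citation you fall back on. Your accompanying sketch (Fubini/Chebyshev splitting plus the slope-separation heuristic) is informal and not load-bearing, but since both you and the paper ultimately rely on the quoted lemma, the approach is the same and correct.
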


Combining this proposition with Lemma~\ref{lee} and Lemma~\ref{lsemi} above, similarly to \cite[Lemma 6.17]{BG00}, we can get the following estimate.

\begin{lemma}{\label{ll}}
     Fix $\kappa > 0$, $\beta, \gamma \in \pD$, and a sufficiently large integer $n$. Denote by $\Omega_{n,\kappa} \subset \T$ the set of frequencies $\omega \in \mathrm{DC}$ satisfying the following:
    \begin{equation}{\label{cll}}
        \text{There are $n_{0} < n^{C}$, $2^{(\log n)^{2}} \leq l \leq 2^{(\log n)^{3}}$, and $z \in \partial \mathbb{D}$ such that}
    \end{equation}
    \begin{equation}{\label{ill1}}
        \lVert (z-\CE^{\beta, \gamma}_{\omega, [0, n_{0}-1]}(x_0))^{-1} \rVert > C(\alpha)^{n},
    \end{equation}
    \begin{equation}{\label{ill2}}
        \frac{1}{n}\log \lVert M_{n}(\omega, x_{0}+l\omega, z) \rVert <L_{n}(\omega, z) - \kappa.
    \end{equation}
Then
    \begin{equation}{\label{lill}}
       \text{ {\rm mes} $\Omega_{n,\kappa} < 2^{-\frac{1}{4}(\log n)^{2}}$}.
    \end{equation}
\end{lemma}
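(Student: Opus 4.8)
The plan is to read Lemma~\ref{ll} as the \emph{elimination of double resonances along the orbit} $\{x_{0}+l\omega\}$: one feeds the one-point measure estimate of Lemma~\ref{lee}, the semi-algebraic section count of Lemma~\ref{lsemi}, and the Bourgain--Goldstein frequency estimate Proposition~\ref{pfe} into a dyadic decomposition of the range of $l$. Concretely, I would first invoke Lemma~\ref{lee} with $\bar n=n^{C}$ --- legitimate since $\log\log\bar n=\log(C\log n)\ll\log n$ --- to produce a set $S\subset\T\times\T$ with ${\rm mes}\,S<e^{-\frac1{16}(c_{1}/C_{\alpha}^{3})\kappa^{3}n}$ consisting of exactly those $(\omega,x)$ with $\omega\in\mathrm{DC}$ for which some $n_{0}<\bar n$ and $z\in\pD$ satisfy \eqref{iee1} and \eqref{iee2}. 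Passing, as in Section~4.2, to the polynomial surrogates \eqref{iee1r}, \eqref{iee2r} of these two conditions enlarges $S$ only by a set of measure $\lesssim e^{-cn^{2}}$, so the bound is unchanged; with that semi-algebraic description Lemma~\ref{lsemi} shows that for every $x\in\T$ the section $S_{x}=\{\omega:(\omega,x)\in S\}$ is a union of at most $M:=n^{C}$ intervals. By construction, $\Omega_{n,\kappa}\subset\{\omega\in\mathrm{DC}:(\omega,x_{0}+l\omega)\in S\text{ for some }2^{(\log n)^{2}}\le l\le 2^{(\log n)^{3}}\}$, which is precisely the kind of set Proposition~\ref{pfe} is designed to control.

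Next I would split the range $2^{(\log n)^{2}}\le l\le 2^{(\log n)^{3}}$ into the at most $(\log n)^{3}$ dyadic blocks $[N,2N)$, $N=2^{j}$ with $(\log n)^{2}\le j\le(\log n)^{3}$, and apply Proposition~\ref{pfe} (using the section count just established) to each block, obtaining
\[
{\rm mes}\{\omega:(\omega,x_{0}+l\omega)\in S\text{ for some }N\le l\le 2N-1\}\le N^{3}({\rm mes}\,S)^{1/2}+MN^{-1}.
\]
Here $N^{3}\le 2^{3(\log n)^{3}}$, which for large $n$ is at most $e^{\frac1{64}(c_{1}/C_{\alpha}^{3})\kappa^{3}n}$ since $(\log n)^{3}=o(n)$, while $({\rm mes}\,S)^{1/2}\le e^{-\frac1{32}(c_{1}/C_{\alpha}^{3})\kappa^{3}n}$; hence $N^{3}({\rm mes}\,S)^{1/2}\le e^{-\frac1{64}(c_{1}/C_{\alpha}^{3})\kappa^{3}n}$. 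On the other hand $MN^{-1}\le n^{C}2^{-(\log n)^{2}}\le 2^{-\frac12(\log n)^{2}}$ for $n$ large, because $C\log n\ll(\log n)^{2}$. Summing over the at most $(\log n)^{3}$ blocks gives ${\rm mes}\,\Omega_{n,\kappa}\lesssim(\log n)^{3}\,2^{-\frac12(\log n)^{2}}<2^{-\frac14(\log n)^{2}}$ for $n$ large, which is \eqref{lill}.

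\emph{Main obstacle.} All the substantive content lives in the cited ingredients (Lemma~\ref{lee}, Lemma~\ref{lsemi}, Proposition~\ref{pfe}), so what remains is careful parameter matching --- and that is where the care goes. The range of $l$ is enormous --- up to $2^{(\log n)^{3}}$ --- whereas ${\rm mes}\,S$ is only exponentially small \emph{in $n$}, and $M=n^{C}$ while $N\ge 2^{(\log n)^{2}}$; one must check simultaneously that $N^{3}({\rm mes}\,S)^{1/2}$ stays negligible for the largest $N$ (this uses $(\log n)^{3}\ll n$), that $MN^{-1}$ stays below the target for the smallest $N$ (this uses $\log n\ll(\log n)^{2}$), and that summing the $\sim(\log n)^{3}$ dyadic contributions does not spoil the bound. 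A secondary point to verify is that the replacement of the true resonance conditions by the polynomial ones \eqref{iee1r}, \eqref{iee2r}, which Lemma~\ref{lsemi} requires, enlarges $S$ by a set of measure only $\lesssim e^{-cn^{2}}$, negligible against $e^{-c\kappa^{3}n/C_{\alpha}^{3}}$.
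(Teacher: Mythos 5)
Your proposal is correct and follows essentially the same route as the paper: Lemma~\ref{lee} with $\bar n=n^{C}$, the semi-algebraic section bound of Lemma~\ref{lsemi}, Proposition~\ref{pfe} applied on dyadic blocks $N\le l\le 2N-1$ giving $N^{3}({\rm mes}\,S)^{1/2}+n^{C}N^{-1}\lesssim N^{-1/2}$, and a final summation over the dyadic range of $N$. Your explicit parameter checks (that $(\log n)^{3}\ll n$ and $C\log n\ll(\log n)^{2}$) and the remark on the negligible error from passing to the polynomial surrogates \eqref{iee1r}--\eqref{iee2r} just spell out steps the paper leaves implicit.
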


\begin{proof}
    Let $2^{(\log n)^{2}} \leq N \leq 2^{(\log n)^{3}}$ and $\bar{n} = n^{C}$ in Lemma \ref{lee}. The set $S \subset \mathbb{T} \times \mathbb{T}$ of $(\omega, x)$ satisfying $\omega \in \mathrm{DC}$, \eqref{iee1r} and \eqref{iee2r} is of measure
\begin{equation*}
    \text{ {\rm mes} $S < e^{-\frac{1}{16}(c_{1}/C_{\alpha}^{3})\kappa ^{3}n}$.}
\end{equation*}
Moreover, by Lemma~\ref{lsemi}, each of the sections $S_{x}$ of $S$ is a union of at most $n^{C}$ intervals.

Hence Proposition \ref{pfe} implies
\begin{align*}
\text{mes} \{ \omega \in \mathrm{DC}  & : \text{ we have \eqref{ill1}--\eqref{ill2} for some } n_{0} < n^{C}, \, N \le l \le 2N-1 \text{ and } z \in \partial \mathbb{D} \} \\
& \leq \text{mes} \{ \omega \in \mathbb{T} : (\omega, x_{0}+l\omega) \in S \text{ for some } N \le l \le 2N-1 \} \\
& \lesssim N^{3} e^{-\frac{1}{32}(c_{1}/C_{\alpha}^{3})\kappa ^{3}n} + n^{C}N^{-1} \\
& \leq N^{-\frac{1}{2}}.
\end{align*}
Summing over the ranges of $N$ implies \eqref{lill}.
\end{proof}


\section{Proof of Theorem~\ref{talcmv}}

In fact, if we replace \eqref{ill1} by the condition
\begin{equation*}
    \lVert (z-\mathcal{E}^{\beta, \gamma}_{\omega,[-n_{0}, n_{0}]}(x_0))^{-1} \rVert > C(\alpha)^{n}
\end{equation*}
and restrict the index set to $[-n_{0}, n_{0}]$ rather than to $[0, n_{0}-1]$, we can also get the estimate \eqref{lill}.

Denote by $\Omega_{n,\kappa}$ the frequency set obtained in Lemma~\ref{ll} and define
\begin{equation*}
\Omega_{\kappa} = \bigcap\limits_{n'} \bigcup\limits_{n>n'} \Omega_{n,\kappa}, \quad \Omega = \bigcup\limits_{\kappa} \Omega_{\kappa}.
\end{equation*}
Then we have
\begin{equation*}
    \text{mes $\Omega_{\kappa} \le \inf\limits_{n'} \sum\limits_{n>n'} e^{-\frac{1}{4}(\log n)^{2}} =0$}.
\end{equation*}
Since $\Omega_{\kappa}$ is decreasing, we can take a countable subsequence of $\kappa$ and hence obtain $\text{mes} \, \Omega = 0$.

Assume $\omega \in \mathcal{I} \cap ({\rm DC} \backslash \Omega)$ and let $z \in \mathcal{K}$, $\xi = (\xi_{n})_{n \in \mathbb{Z}}$ satisfy the equation
\begin{equation}{\label{eef}}
    \mathcal{E}_{\omega} \xi = z \xi,
\end{equation}
where
\begin{equation}
    \xi _{0} = 1  \quad {\rm and} \quad  |\xi_{n}| \lesssim |n|^{C}.
\end{equation}
Assume $L(z) > \delta_{0} > 0$ and $\kappa \ll \delta_{0}$. Since $\omega \notin \Omega_{\kappa}$, there exists $n'$ such that $\omega \notin \Omega_{n, \kappa}$ for all $n>n'$.

Since $L_{n}(z) \to L(z)$, we can take $n'$ large enough to satisfy
\begin{equation*}
    \text{$L_{n}(z) < L(z) + \kappa$ for $n > n'$}.
\end{equation*}

Since condition \eqref{cll} is not fulfilled, then if there is $n_{0} < n^{C}$ such that
\begin{equation*}
    \lVert G^{\beta, \gamma}_{\omega, [-n_{0}, n_{0}]}(x_0) \rVert > C(\alpha)^{n},
\end{equation*}
Lemma~\ref{ll} implies that for all $2^{(\log n)^{2}} \leq \lvert l \rvert  \leq 2^{(\log n)^{3}}$, \eqref{ill2} must fail, that is,
\begin{equation}
    \frac{1}{n} \log \lVert M^{z}_{n}(x_{0} + l\omega) \rVert > L_{n}(z) - \kappa.
\end{equation}
Then, by Lemma~\ref{lemma:LEandGreenEst:CMV}, there exist $\beta$ and $\gamma$ such that for each $2^{(\log n)^{2}} \leq l \leq 2^{(\log n)^{3}}$,
\begin{equation}{\label{egI}}
    \lvert G^{\beta, \gamma}_{\omega, [l,n-1+l]}(n_{1}, n_{2}) \rvert \leq e^{-L(z)\lvert n_{1}-n_{2} \rvert + C\kappa n}<e^{-\delta_{0}\lvert n_{1} - n_{2} \rvert + C\kappa n}.
\end{equation}
Now consider the interval $[\frac{N}{2}, 2N]$, where $2^{(\log n)^{2}+2} \leq N \leq 2^{(\log n)^{3}-1}$ (this makes sure that $[\frac{N}{2}, 2N] \subset [k+2^{(\log n)^{2}}, n-1+2^{(\log n)^{3}}]$).

Invoking the paving property (Appendix~\ref{paving}), we can deduce from \eqref{egI} that
\begin{equation}
    \text{$\lvert G^{\beta, \gamma}_{\omega, [\frac{N}{2}, 2N]}(n_{1}, n_{2}) \rvert < e^{-\frac{\delta_{0}}{2} \lvert n_{1}-n_{2} \rvert + C\kappa N}$ for $n_{1}, n_{2} \in [\frac{N}{2}, 2N]$.}
\end{equation}

Restricting the equation \eqref{eef} to $[\frac{N}{2}, 2N]$, for $k \in [\frac{N}{2}, 2N]$, we have (for details, see Appendix~\ref{boundary}),
\begin{equation}{\label{iefg}}
    \lvert \xi_{k} \rvert \leq |G^{\beta, \gamma}_{\omega, [\frac{N}{2}, 2N]}(k, \frac{N}{2};z)| N^{C} + |G^{\beta, \gamma}_{\omega, [\frac{N}{2}, 2N]}(k, 2N;z)| N^{C},
\end{equation}
and hence
\begin{equation}
    \lvert \xi_{N} \rvert \leq N^{C} e^{-\frac{\delta_{0}}{4}N + C\kappa N}< e^{-\frac{\delta_{0}}{5}N}.
\end{equation}
This is the required exponential decay property (also valid for the negative side).

It remains to show that for some $n_{0} < n^{C}$, the following inequality holds,
$$
\|G_{\omega,[-n_{0}, n_{0}]}^{\beta, \gamma}\| > C(\alpha)^{n}.
$$

\noindent Recalling $\xi_{0}=1$, \cite[Lemma 3.9]{Krueg2013IMRN} implies
\begin{equation}
    1 \leq \lVert \xi_{[-n_{0},n_{0}]} \rVert \leq 2 \lVert  G^{\beta, \gamma}_{\omega, [-n_{0},n_{0}]} \rVert (\lvert \xi_{-n_{0}} \rvert +\lvert \xi_{-n_{0}+1} \rvert  + \lvert \xi_{n_{0}-1} \rvert +\lvert \xi_{n_{0}} \rvert).
\end{equation}
Thus it will suffice to show that there exists $n_{0} < n^{C}$ such that
\begin{equation}{\label{xin0}}
    \lvert \xi_{-n_{0}} \rvert +\lvert \xi_{-n_{0}+1} \rvert  + \lvert \xi_{n_{0}-1} \rvert +\lvert \xi_{n_{0}} \rvert < \frac{1}{2}C(\alpha)^{-n}.
\end{equation}
Let
\begin{equation*}
    n_{1} = C' \frac{n}{\delta_{0}}
\end{equation*}
where $C'=\frac9n \log2+3 \log C(\alpha)$.

Assume for some $0 < j < n^{C}$,
\begin{equation}{\label{n11}}
   \Big | \frac{1}{n_{1}} \log \| M^{z}_{n_{1}} (x_{0} + j \omega) \| - L_{n_{1}}(z) \Big | < C\kappa.
\end{equation}
Then we have
\begin{equation*}
    |G^{\beta, \gamma}_{\omega, [j, n_{1}-1 + j]}(x, y)| < e^{-L(z)|x-y| + C\kappa n_{1}},
\end{equation*}
which implies for $k = j + \LARGE [ \frac{n_{1}}{2}\LARGE ]$,
\begin{equation*}
    \begin{split}
        |\xi_{k}| &<  n^{C} |G^{\beta, \gamma}_{\omega, [ j, n_{1}-1 + j ]}(k, j)| + |G^{\beta, \gamma}_{\omega, [ j, n_{1}-1 + j ]}(k, n_{1}-1+j)|\\
        &< n^{C} e^{-L(z)\frac{n_{1}}{2} + C\kappa n_{1}}\\
        &< e^{-\frac{\delta_{0}}{3}n_{1}} = \frac{1}{8}C(\alpha)^{-n}.
    \end{split}
\end{equation*}
Obviously, we also have $|\xi_{k-1}| < \frac{1}{8}C(\alpha)^{-n}$.

Similarly, assuming
\begin{equation}{\label{n12}}
     \Big |\frac{1}{n_{1}} \log \| M^{z}_{n_{1}} (x_{0} + (-j-n_{1}) \omega) \| - L_{n_{1}}(z) \Big | < C\kappa,
\end{equation}
the same method yields $ |\xi_{-k}|, |\xi_{-k+1}| < \frac{1}{8}C(\alpha)^{-n}$.

Since $k < n^{C}$ implies that \eqref{xin0} is satisfied, it remains to show that \eqref{n11} and \eqref{n12} hold.

Letting $J=n^{C}$, we verify them by averaging over $j \in [J, 2J]$. Thus recalling Lemma~\ref{lar} (with $n$ replaced by $n_{1}$), we see that
\begin{equation}
    \begin{split}
    \frac{1}{J} \sum_{j=J+1}^{2J} & \Big [ \frac{1}{n_{1}} \log \lVert M^{z}_{n_{1}}(x_{0}+j \omega)\rVert + \frac{1}{n_{1}} \log \lVert M^{z}_{n_{1}}(x_{0}+(-j-n_{1}) \omega)\rVert \Big ]\\
    &= 2L_{n_{1}}(z) + O \Big ( \frac{1}{n_{1}} \Big).
    \end{split}
\end{equation}
Hence, there is $J < j \leq 2J$ such that
\begin{equation}
    \begin{split}
    \frac{1}{n_{1}} \log & \lVert M^{z}_{n_{1}}(x_{0}+j \omega)\rVert + \frac{1}{n_{1}} \log \lVert M^{z}_{n_{1}}(x_{0}+(-j-n_{1}) \omega)\rVert\\
    & > 2L_{n_{1}}(z) + O \Big ( \frac{1}{n_{1}} \Big),
    \end{split}
\end{equation}
implying by the upper bound (Lemma~\ref{lub})
\begin{equation}
    \begin{split}
    L_{n_{1}}(z) + C\kappa
    & >\frac{1}{n_{1}} \log  \lVert M^{z}_{n_{1}}(x_{0}+j \omega)\rVert\\
    & > 2L_{n_{1}}(z) + O \Big ( \frac{1}{n_{1}} \Big) - (L_{n_{1}}(z) + C\kappa)\\
    & > L_{n_{1}}(z) - C\kappa.
    \end{split}
\end{equation}
Hence \eqref{n11} holds, and in the same way we may obtain the estimate \eqref{n12}.

Therefore $\mathcal{E}_{\omega}(x_{0})$ exhibits Anderson localization for a.e.\ $\omega \in \mathcal{I}$, assuming $L(z) > \delta_{0}$.

\begin{remark}
Recall that $\mathcal{C}_{[a,b]} = \CE_{[a,b]}$ whenever $1 \leq a \leq b$ and $\mathcal{C}_{[0,b]}=\CE_{[0,b]}$ with $\alpha_{-1} = -1$. Using this, the proof of the half-line case, Theorem \ref{halflineal}, can be carried out similarly.
\end{remark}

\section{Quantum Walks}

In this section, we apply our general localization result to quantum walks and obtain Anderson localization for quantum walks with suitable analytic quasi-periodic coins.

A \emph{quantum walk} is described by a unitary operator on the Hilbert space $\mathcal{H} = \ell^{2}(\mathbb{Z}) \otimes \mathbb{C}^{2}$, which models a state space in which a wave packet comes equipped with a spin at each integer site. Here, the elementary tensors of the form $\delta_{n} \otimes e_{\uparrow}$ and $\delta_{n} \otimes e_{\downarrow}$ with $n \in \mathbb{Z}$ comprise an orthonormal basis of $\mathcal{H}$ (where $\{ e_{\uparrow}, e_{\downarrow} \}$ denotes the canonical basis of $\mathbb{C}^{2}$). A time-homogeneous quantum walk scenario is given as soon as unitary coins
\begin{equation*}
    C_{n} = \begin{pmatrix}
              c^{11}_{n}     &    c^{12}_{n} \\
              c^{21}_{n}     &    c^{22}_{n}
    \end{pmatrix} \in \mathbb{U}(2), \quad n \in \mathbb{Z},
\end{equation*}
are specified. As one passes from time $t$ to time $t + 1$, the update rule of the quantum walk applies the coins coordinate-wise and shifts spin-up states to the right and spin-down states to the left, viz
\begin{equation*}
    \delta_{n} \otimes e_{\uparrow} \mapsto c^{11}_{n}\delta_{n+1} \otimes e_{\uparrow} + c^{21}_{n}\delta_{n-1} \otimes e_{\downarrow},
\end{equation*}
\begin{equation*}
    \delta_{n} \otimes e_{\downarrow} \mapsto c^{12}_{n}\delta_{n+1} \otimes e_{\uparrow} + c^{22}_{n}\delta_{n-1} \otimes e_{\downarrow}.
\end{equation*}
If we extend this by linearity and continuity to general elements of $\mathcal{H}$, this defines a unitary operator $U$ on $\mathcal{H}$. Equivalently, denote a typical element $\psi \in \mathcal{H}$ by
\begin{equation*}
    \psi = \sum_{n \in \mathbb{Z}}(\psi_{\uparrow, n}\delta_{n} \otimes e_{\uparrow} + \psi_{\downarrow, n}\delta_{n} \otimes e_{\downarrow}),
\end{equation*}
where one must have
\begin{equation*}
    \sum_{n \in \mathbb{Z}}(\lvert \psi_{\uparrow, n} \rvert ^{2} + \lvert \psi_{\downarrow, n} \rvert ^{2}) < \infty.
\end{equation*}
We may then describe the action of $U$ in coordinates via
\begin{equation*}
    [U \psi]_{\uparrow, n} = c_{n-1}^{11} \psi_{\uparrow, n-1} + c_{n-1}^{12} \psi_{\downarrow, n-1},
\end{equation*}
\begin{equation*}
    [U \psi]_{\downarrow, n} = c_{n+1}^{21} \psi_{\uparrow, n+1} + c_{n+1}^{22} \psi_{\downarrow, n+1}
\end{equation*}
and the matrix representation of a quantum walk is given by
\begin{equation*}
    U =
    \begin{pmatrix}
         \ddots & \ddots & \ddots &     &   &    &   &     \\
              & 0 & 0  &  c^{21}_{0}  &  c^{22}_{0} &  &   &   &    \\
              &  c^{11}_{-1} &  c^{12}_{-1}  &0  &  0 &  &   &    &      \\
               & &   & 0 & 0  &  c^{21}_{1}  &  c^{22}_{1} &       &           \\
               & &   &c^{11}_{0} &  c^{12}_{0}  &0  &  0 &       &               \\
                & &   &   &    & 0 & 0  &  c^{21}_{2}  &  c^{22}_{2}  &         \\
                & &   &   &    & c^{11}_{1} &  c^{12}_{1} & 0 &0  &         \\
                & &   &   &    &    &  \ddots & \ddots & \ddots
    \end{pmatrix}.
\end{equation*}

Quantum walks can be connected to extended CMV matrices as follows. If all Verblunsky coefficients with even index vanish, then the extended CMV matrix becomes
\begin{equation*}
    \mathcal{E} =
    \begin{pmatrix}
         \ddots & \ddots & \ddots &     &   &    &   &     \\
              & 0 & 0  &  \overline{\alpha_{1}}  &  \rho_{1} &  &   &   &    \\
              &  \rho_{-1} &  -\alpha_{-1}  &0  &  0 &  &   &    &      \\
               & &   & 0 & 0  &  \overline{\alpha_{3}} &  \rho_{3} & &   \\
               & &   &\rho_{1} &  -\alpha_{1}  &0  &  0 &       &               \\
                & &  &  &  & 0 & 0  & \overline{\alpha_{5}} &  \rho_{5}  &    \\
                & &   &   &    & \rho_{3} &  -\alpha_{3} & 0 &0  &         \\
                & &   &   &    &    &  \ddots & \ddots & \ddots
    \end{pmatrix},
\end{equation*}
which resembles the matrix representation $U$.

One may notice, however, that $\rho_{n}>0$ in the extended CMV matrix $\mathcal{E}$, whereas $c_{n}^{kk}$ in the quantum walk matrix $U$ could be a complex number with non-zero argument, which means that these two matrices may not match. However, this can be easily resolved by conjugation with a suitable diagonal unitary, that is, there exists a diagonal unitary matrix $\Lambda$ such that $\mathcal{E} = \Lambda^{*} U \Lambda$. For details, see \cite{CGMV10} or \cite{DFO16}.

Therefore, the discussion above shows that a quantum walk corresponds to an extended CMV matrix whose Verblunsky coefficients with even index are zero.

From now on we consider the case of coined quantum walks on the integer lattice where the coins are distributed quasi-periodically according to the rule

\begin{equation*}
    C_{n} = C_{n, \omega, x}= \begin{pmatrix}
              c^{11}(x+n\omega)     &    c^{12}(x+n\omega) \\
              c^{21}(x+n\omega)     &    c^{22}(x+n\omega)
    \end{pmatrix}
\end{equation*}
with analytic sampling functions and $\omega \notin \mathbb{Q}$. We denote the corresponding quantum walk by $U_{\omega}(x)$. In this section, in order to distinguish the energy and the complex number $z=x+iy$, we will use $E$ to denote the energy.

\subsection{Gesztesy-Zinchenko Cocycles}{\label{GZcocycle}}

It is well known that for a Schr\"odinger operator $H_{\omega}$, the solutions of the difference equation $H_{\omega}u = Eu$ can be generated by transfer matrices, which for dynamically generated potentials in turn are generated by a suitable Schr\"odinger cocycle. In \cite{GZ06}, Gesztesy and Zinchenko construct an analogous cocycle (known as the G-Z cocycle) for the CMV case, which is basically a two step Szeg\H o cocycle.

We will use a modified G-Z cocycle here based on notations in \cite{FOZ17}. Consider the matrices
\begin{equation*}
    M_{f}^{E}(x)=\frac{1}{\rho_{f}}\begin{pmatrix}
    -\alpha_{f}(x) & 1 \\ 1 & -\overline{\alpha_{f}(x)}
    \end{pmatrix}, \quad M_{g}^{E}(x)=\frac{1}{\rho_{g}}\begin{pmatrix}
    -\overline{\alpha_{g}(x)} & E \\ E^{-1} & -\alpha_{g}(x)
    \end{pmatrix}.
\end{equation*}
If $u_{x}=(u_{x}(n))_{n\in \mathbb{Z}} \in \mathbb{C}^{\mathbb{Z}}$ solves $\mathcal{E}_{\omega}(x)u_{x}=Eu_{x}$, then we define $v_{x}:=\mathcal{L}^{-1}u_{x}$. It follows that $\mathcal{M}u_{x}=Ev_{x}$ and we have $\mathcal{E}^{T}_{\omega}(x)v_{x}=Ev_{x}$. The G-Z cocycle recursion reads
\begin{equation*}
    \begin{pmatrix}
    u_{x}(k) \\ v_{x}(k)
    \end{pmatrix}=M_{g}^{E}(T^{\frac{k-1}{2}}x) \begin{pmatrix}
    u_{x}(k-1) \\ v_{x}(k-1)
    \end{pmatrix}, \quad \quad k\quad odd.
\end{equation*}
\begin{equation*}
    \begin{pmatrix}
    u_{x}(k) \\ v_{x}(k)
    \end{pmatrix}=M_{f}^{E}(T^{\frac{k-2}{2}}x) \begin{pmatrix}
    u_{x}(k-1) \\ v_{x}(k-1)
    \end{pmatrix},\quad \quad k \quad even.
\end{equation*}
The corresponding G-Z cocycle map $M^{E}:\T \rightarrow \mathbb{SU}(1, 1)$ is
\begin{equation*}
    M^{E}(x)=M^{E}_{f}(x)M^{E}_{g}(x)=\frac{1}{\rho_{f}(x)\rho_{g}(x)}\begin{pmatrix}
    \alpha_{f}(x)\overline{\alpha_{g}(x)} + E^{-1} & -\alpha_{g}(x)-E\alpha_{f}(x) \\
    -\overline{\alpha_{f}(x)}E^{-1} - \overline{\alpha_{g}(x)} & E+\overline{\alpha_{f}(x)}\alpha_{g}(x)
    \end{pmatrix}.
\end{equation*}

Define $
    P = \begin{pmatrix}
              0   &   1\\
              1   &   0
    \end{pmatrix}$.
Then the two step Szeg\H o cocycle map $S^{E}(x)$ can be related to $M^{E}(x)$ by
\begin{equation*}
    S^{E}(x) = P\cdot M^{E}(x)\cdot P.
\end{equation*}
This is the general setting of G-Z cocycle and obviously, $(\omega, S^{E}(x))$ has the same Lyapunov exponent as $(\omega, M^{E}(x))$.

\subsection{Anderson Localization for Analytic Quasi-Periodic Quantum Walks}

Recall that quantum walks can be viewed as a special class of extended CMV matrices, more specifically, those whose Verblunsky coefficients satisfy $\alpha_{2n}=0$.

According to the discussion above, denote the corresponding G-Z cocycle map and $n$-step transfer matrices by
\begin{equation*}
    S^{(E,\alpha)}(x)=\frac{1}{\rho(x)}\begin{pmatrix}
    E & -\overline{\alpha(x)} \\
     -\alpha(x)& E^{-1}
    \end{pmatrix}, \quad S_{n}^{(E,\alpha)}(x) = \prod_{j=n}^{1} S^{(E,\alpha)}(x+j\omega)
\end{equation*}
and the associated Lyapunov exponent of the quantum walk $U_{\omega}(x)$ is defined by
\begin{equation*}
    L(\omega,S^(E,\alpha))=\lim_{n \to \infty} \frac{1}{n}\int_{\T} \log \|S_{n}^{(E,\alpha)}(x)\| \, dx.
\end{equation*}
Next we give the statement of Anderson localization of quantum walks with analytic quasi-periodic coins.

\begin{theorem}
Consider the family of quantum walks $\{ U_{\omega}(x)\}_{\omega \in \T}$, where the coins are analytic. Let $\mathcal{I} \subset \T$ and $\mathcal{K} \subset \partial \mathbb{D}$ be compact intervals.

Assume that the Lyapunov exponent
\begin{equation*}
L(\omega,S^{(E,\alpha)}) > \delta_{0} > 0
\end{equation*}
for all $\omega \in \mathcal{I}$ and $E \in \mathcal{K}$.

Fix $x_{0} \in \T$. Then for almost every $\omega \in \mathcal{I}$, the corresponding quantum walk $U_{\omega}(x_{0})$ exhibits Anderson localization.
\end{theorem}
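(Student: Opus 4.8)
The plan is to transcribe the proof of Theorem~\ref{talcmv}, working throughout with the Gesztesy--Zinchenko cocycle of Section~\ref{GZcocycle} in place of the Szeg\H o cocycle. A quantum walk $U_{\omega}(x)$ is unitarily equivalent to an extended CMV matrix all of whose even-indexed Verblunsky coefficients vanish, but the resulting sequence $(\alpha_{n})$ is not of the form $\alpha(x+n\omega)$ for a single analytic sampling function, so Theorem~\ref{talcmv} cannot be applied as a black box. The key point is that the (two-step) G-Z transfer cocycle $S^{(E,\alpha)}$ --- equivalently $M^{E}=P\,S^{E}\,P$ --- \emph{is} an analytic quasi-periodic $\SU(1,1)$-valued cocycle over the single rotation $x\mapsto x+\omega$, and it generates the solutions of $U_{\omega}(x)\psi=E\psi$ exactly as the Szeg\H o cocycle generates the solutions of $\CE_{\omega}(x)\xi=z\xi$. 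So I would re-run Sections~2--5 with $M^{z}_{n}$ replaced by $S^{(E,\alpha)}_{n}$.

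First I would re-establish the large deviation estimate. As in the proof of Lemma~\ref{lldt}, one checks that $S^{(E,\alpha)}(x)$ is conjugate, via the fixed unitary $Q$, to an analytic $\SL(2,\R)$-valued cocycle, and the same trace computation gives $\|S^{(E,\alpha)}(x)\|+\|S^{(E,\alpha)}(x)\|^{-1}=\frac{2}{\sqrt{1-|\alpha(x)|^{2}}}$ for $E\in\partial\mathbb{D}$. On a complex strip this produces the analog of the constant $C_{\alpha}$ from \eqref{e.calphadef} and, via \cite[Theorem~1]{YZ14}, the LDT \eqref{ildtszego} for $S^{(E,\alpha)}_{n}$; here one needs the coin analog of \eqref{e.sampfunctassump}, which is automatic since an analytic coin has $\sup_{\T}|\alpha|<1$. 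The norm-versus-Lyapunov estimates, Lemmas~\ref{lub} and~\ref{lar}, then carry over verbatim.

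Next I would obtain the Green's function estimates. Restricting $U_{\omega}$ to a finite block $[a,b]$ with suitable unitary boundary conditions and invoking the quantum-walk form of the Kr\"uger/CGMV resolvent identity (available from \cite{CGMV10, DFO16, FOZ17, Krueg2013IMRN}), one expresses the finite-volume Green's function entries as ratios bounded above and below by $\|S^{(E,\alpha)}_{n}\|$, and hence obtains off-diagonal exponential decay just as in Lemma~\ref{lemma:LEandGreenEst:CMV}. Then comes the elimination of double resonances: the conditions \eqref{iee1}--\eqref{iee2}, with $\CE$ replaced by $U$ and $M^{z}_{n}$ by $S^{(E,\alpha)}_{n}$, remain semi-algebraic in $(\cos\omega,\sin\omega,\cos x,\sin x,\Re E,\Im E)$ after truncating Fourier series, with the degree bounds of Lemma~\ref{lsemi}, so Proposition~\ref{pfe} gives --- as in Lemma~\ref{ll} --- a measure-zero set $\Omega$ of frequencies outside of which double resonances along the orbit $\{x_{0}+l\omega\}$ are eliminated. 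Finally, for $\omega\in\CI\cap(\mathrm{DC}\setminus\Omega)$ and $E\in\CK$, I would take a polynomially bounded solution of $U_{\omega}(x_{0})\psi=E\psi$, pass to the associated pair $(u,v)=(u,\CL_{\omega}^{-1}u)$ of the G-Z formalism so that the paving property (Appendix~\ref{paving}) and the boundary-term estimates (Appendix~\ref{boundary}) of Section~5 apply unchanged, and deduce $|\psi_{n}|\lesssim e^{-c|n|}$; Anderson localization then follows from the standard reduction, cf.\ \cite[Theorem~3.4]{DFLY16} and the Remark following Theorem~\ref{talcmv}.

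The hard part will be bookkeeping rather than anything conceptual: one must set up finite-volume truncations of $U$ that are genuinely unitary and for which a clean Green's-function-to-transfer-matrix identity holds, and one must phrase the two-component eigenvalue equation $U\psi=E\psi$ in the scalar solution language $(u,v)$ so that the Section~5 paving and boundary estimates transfer without change. Apart from that, the argument is essentially a line-by-line transcription of Sections~2--5; the only genuinely new analytic input is the verification of the strip bound and the $C_{\alpha}$-analog for $S^{(E,\alpha)}$.
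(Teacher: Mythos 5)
Your proposal is correct and follows essentially the same route as the paper: the paper simply remarks that this theorem is an application of Theorem~\ref{talcmv} via the correspondence between quantum walks and extended CMV matrices with vanishing even-indexed Verblunsky coefficients, with the Gesztesy--Zinchenko (two-step Szeg\H o) cocycle $S^{(E,\alpha)}$ playing the role of the Szeg\H o cocycle, and spends the rest of the section verifying uniform positivity of the Lyapunov exponent for a concrete coin. Your point that the even-index zeros prevent a literal black-box application of Theorem~\ref{talcmv}, so that the Bourgain--Goldstein machinery of Sections~2--5 must be transcribed with $S^{(E,\alpha)}_{n}$ in place of $M^{z}_{n}$, is a fair caveat the paper leaves implicit, but the substance of the argument is the same.
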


\begin{remark}
This result is an application of Theorem~\ref{talcmv}. For it to be applicable, we need to verify the uniform positivity assumption of $L(\omega,S^{(E,\alpha)})$ in a given setting. This means we need to prove that there exists an analytic function $\alpha(x):\T \to \D$ such that the corresponding G-Z cocycle has uniformly positive Lyapunov exponent. Such an example will be provided below.
\end{remark}

\subsection{Uniform Positivity of Lyapunov Exponents}

As discussed above, for analytic quasi-periodic Szeg\H o cocycles, explicit examples leading to uniformly positive Lyapunov exponents have already been found in \cite{zhangpositive}. Hence, we naturally attempt to modify the argument from \cite{zhangpositive} so that it applies to the G-Z cocycle associated with quantum walks. Indeed, as we will see in this subsection, the same function works due to the same reasoning.

\begin{lemma}{\label{qwpositivity}}
Let $\alpha(x), \theta(x), k$ and $\pi_{1}$ be the same as in Proposition~\ref{pzz1}. Then there exists a finite set $\mathcal{F} = \mathcal{F}(\theta,k)\subset \T\times\pD$ such that for any compact set $\mathcal{A} \subset (\T\times \pD) \backslash \mathcal{F}$,
\begin{equation*}
   L(\omega,S^{(E,\lambda v)}) \geq -\frac{1}{2}\log (1-\lambda) + c_{0}
\end{equation*}
for some $c_{0}=c_{0}(\theta,k,\mathcal{A})\in \mathbb{R}$ and all $(\omega,E;\lambda)\in \mathcal{A}\times(0,1)$. In particular, $L(\omega,S^{(E,\lambda v)})$ is uniformly positive in $(\omega,E) \in \mathcal{A}$ when $\lambda$ is sufficiently close to 1.
\end{lemma}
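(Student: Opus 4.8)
The plan is to deduce the lemma from Proposition~\ref{pzz1} by an explicit conjugacy, rather than by repeating Zhang's argument from scratch. The starting observation is purely algebraic: for $E\in\partial\D$ put $D_{E}=\begin{pmatrix}E&0\\0&1\end{pmatrix}$, which is unitary since $|E|=1$. A direct computation shows that
\begin{equation*}
D_{E}^{-1}\,S^{(E,\lambda v)}(x)\,D_{E}\;=\;\frac{1}{\rho(x)}\begin{pmatrix}E&-\overline{\lambda v(x)}\,E^{-1}\\ -\lambda v(x)\,E&E^{-1}\end{pmatrix},
\end{equation*}
and the matrix on the right is exactly the Szeg\H o cocycle map of Section~1.2 attached to the \emph{same} sampling function $\alpha=\lambda v$, with spectral parameter $E^{2}$ and with $\sqrt{E^{2}}$ read as $E$ (which branch one takes is immaterial below, since only norms enter). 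In other words, the G--Z cocycle driving a quantum walk is, up to the constant unitary conjugation $D_{E}$, nothing but the Szeg\H o cocycle, with $E$ in the slot usually occupied by $\sqrt z$.

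First I would telescope the iterates using this identity: $S^{(E,\lambda v)}_{n}(x)=D_{E}\,M_{n}(x+\omega)\,D_{E}^{-1}$, where $M_{n}$ denotes the Szeg\H o $n$-step transfer matrix at spectral parameter $E^{2}$, so $\lVert S^{(E,\lambda v)}_{n}(x)\rVert=\lVert M_{n}(x+\omega)\rVert$ for every $n$ and $x$. Integrating in $x$, using translation invariance of Lebesgue measure on $\T$, and letting $n\to\infty$ then yields
\begin{equation*}
L\bigl(\omega,S^{(E,\lambda v)}\bigr)\;=\;L(E^{2}),
\end{equation*}
where the right-hand side is precisely the Szeg\H o-cocycle Lyapunov exponent of Proposition~\ref{pzz1} (there denoted $L(z)$), evaluated at the point $E^{2}\in\partial\D$. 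So the whole task reduces to transporting the lower bound of Proposition~\ref{pzz1} through the $2$-to-$1$ squaring map $\Sigma(\omega,E)=(\omega,E^{2})$ on $\T\times\partial\D$.

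Next I would let $\mathcal{F}_{0}=\mathcal{F}_{0}(\theta,k)$ be the finite exceptional set supplied by Proposition~\ref{pzz1} and put $\mathcal{F}:=\Sigma^{-1}(\mathcal{F}_{0})$, which is again finite. Given a compact set $\mathcal{A}\subset(\T\times\partial\D)\setminus\mathcal{F}$, its image $\mathcal{A}_{0}:=\Sigma(\mathcal{A})$ is a compact subset of $(\T\times\partial\D)\setminus\mathcal{F}_{0}$, so Proposition~\ref{pzz1} furnishes $c_{0}=c_{0}(\theta,k,\mathcal{A}_{0})$ with $L(z)\ge-\tfrac12\log(1-\lambda)+c_{0}$ for all $(\omega,z;\lambda)\in\mathcal{A}_{0}\times(0,1)$; specializing to $z=E^{2}$ with $(\omega,E)\in\mathcal{A}$ and invoking $L(\omega,S^{(E,\lambda v)})=L(E^{2})$ gives the asserted bound. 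The ``in particular'' clause is then immediate, since $-\tfrac12\log(1-\lambda)\to+\infty$ as $\lambda\uparrow1$ while $c_{0}$ does not depend on $\lambda$ or $E$, so the right-hand side becomes positive uniformly in $(\omega,E)\in\mathcal{A}$ once $\lambda$ is close enough to $1$.

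I do not expect a genuine obstacle: the entire content is the observation that $S^{(E,\lambda v)}$ is conjugate, by a constant unitary, to the Szeg\H o cocycle already analyzed in \cite{zhangpositive}, so no new dynamical estimate is needed; the only points requiring routine care are the harmless branch choice for the square root and the bookkeeping of the finite exceptional set under $E\mapsto E^{2}$. Alternatively --- and this is presumably what ``the same reasoning'' means --- one may re-run Zhang's large-coupling subharmonicity argument verbatim: it uses only that the cocycle equals $\rho(x)^{-1}$ times a matrix whose off-diagonal entries are $-\overline{\alpha(x)}$ and $-\alpha(x)$ and whose diagonal entries are unimodular constants, a structure shared by $S^{(E,\alpha)}(x)$, and it then produces the identical bound $-\tfrac12\log(1-\lambda)+c_{0}$.
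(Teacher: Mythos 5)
Your reduction is correct: with $D_{E}=\mathrm{diag}(E,1)$ one indeed has $D_{E}^{-1}S^{(E,\lambda v)}(x)D_{E}=M^{z}(x)$ at $z=E^{2}$ (branch $\sqrt{z}=E$), hence $\lVert S^{(E,\lambda v)}_{n}(x)\rVert=\lVert M^{E^{2}}_{n}(x+\omega)\rVert$ and $L(\omega,S^{(E,\lambda v)})=L(E^{2})$, and pulling back the finite set of Proposition~\ref{pzz1} through $(\omega,E)\mapsto(\omega,E^{2})$ gives the lemma; the two points needing care (the index shift, since $S_{n}$ starts at $j=1$ and only shifts the phase by $\omega$, which integration removes, and the sign ambiguity of the square root, which does not affect norms) are exactly the ones you flag and they are harmless. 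This is, however, a genuinely different route from the paper, which does \emph{not} reduce to Proposition~\ref{pzz1}: it re-runs Zhang's large-coupling argument directly on the G--Z cocycle, i.e.\ the polar decomposition of Section~\ref{polardecomposition}, an explicit computation of the upper-left entry $c(z;\omega,E;\lambda)$ of the resulting $\mathbb{SU}(2)$ factor, the reduction at $\lambda=1$ to nonvanishing of $E^{2}v(z)+v(z-\omega)$, then Proposition~\ref{supositivity}, Avila's acceleration and a compactness argument, and in doing so it identifies the exceptional set concretely as $(\omega,t)=\bigl(\frac{p}{q},\frac14+\frac{m}{2}-k\frac{p}{2q}\bigr)$, noting that it differs from Zhang's. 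Your argument is shorter, needs no new dynamical estimate, and explains structurally why the exceptional set changes (it is the preimage of Zhang's set under squaring, which also makes its finiteness and the compactness of $\Sigma(\mathcal{A})$ immediate); the paper's computation buys an explicit description of $\mathcal{F}$ and of the uniform hyperbolicity of the deformed cocycles, but for the statement as formulated your conjugacy argument fully suffices, and your closing remark that Zhang's proof could alternatively be repeated verbatim is essentially what the paper does (modulo the changed finite set).
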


Before giving the proof, we first need some preparation.

\subsubsection{Polar decomposition}{\label{polardecomposition}}

For $A \in {\rm SL}(2,\mathbb{C})$, it is a standard result that it can be decomposed as $A = U_{1}\sqrt{A^{*}A}$, where $U_{1} \in \mathbb{SU}(2)$ and $\sqrt{A^{*}A}$ is a positive Hermitian matrix. Furthermore, we can decompose $\sqrt{A^{*}A}$ as $\sqrt{A^{*}A}=U_{2}\Lambda U^{*}_{2}$, where the column vectors of $U_{2}$ are eigenvectors of $\sqrt{A^{*}A}$ and $\Lambda = {\rm diag}(\|A\|,\|A\|^{-1})$. Hence, $U_{2}$ may be chosen to be in $ \mathbb{SU}(2)$. That is, any $ {\rm SL}(2,\mathbb{C})$ matrix $A$ can be decomposed as
\begin{equation*}
A = U_{1} U_{2} \Lambda U_{2}^{*}.
\end{equation*}

Now we consider a cocycle $(\omega, A)$, where $\omega \in \T$ and $A(x) \in {\rm SL}(2,\mathbb{C})$ is analytic. If $A(x) \in \mathbb{SU}(2)$, then there is no need to decompose it since $\sqrt{A^{*}A}$ is identity in this case. So let us assume further $A(x) \notin {\rm SU}(2)$ and decompose it as $A(x)=U_{1}(x)U_{2}(x)\Lambda(x)U_{2}^{*}(x)$. Let $U_{3}(x)=U_{1}(x-\omega)U_{2}(x-\omega) \in {\rm SU}(2)$, then we have
\begin{equation*}
U_{3}(x+\omega)^{*}A(x)U_{3}(x) = \Lambda(x) U(x),
\end{equation*}
where $U(x)=U_{2}(x)^{*}U_{1}(x-\omega)U_{2}(x-\omega) \in \mathbb{SU}(2)$. This is equivalent to
\begin{equation*}
(0,U_{3})^{-1} (\omega, A) (0,U_{3}) = (\omega, \Lambda U)
\end{equation*}
which means we can consider the cocycle $(\omega, \Lambda U)$ instead of $(\omega, A)$.

Let $U(x)=\begin{pmatrix} c(x) & -\overline{d(x)}  \\ d(x) & \overline{c(x)} \end{pmatrix}$, where $|c(x)|^{2}+|d(x)|^{2}=1$. Then the uniform positivity of Lyapunov exponents follows from \cite[Lemma 11]{zhangpositive}, if we can show  $\inf_{x \in \T} |c(x)| \geq \gamma \in (0,1)$.

\begin{prop}{\label{supositivity}}{\rm (}\cite[Lemma 11]{zhangpositive}{\rm )}
Let $(\omega, A)$ be a $\mathbb{SU}(2)$ free system, which means $A(x) \notin \mathbb{SU}(2)$, with the equivalent system $(\omega, \Lambda U)$, where $U$ is as above. Assume there exists a $0<\gamma<1$ such that $\inf_{x \in \T} |c(x)| \geq \gamma$ and let $\rho = \frac{1}{\gamma} + \sqrt{\frac{1}{\gamma^{2}}-1} > 1$. If
\begin{equation*}
\inf_{x \in \T} \|A(x)\| = \lambda > \rho,
\end{equation*}
then $(\omega, A) \in \mathcal{UH}$. Moreover, we have that
\begin{equation*}
L(\omega, A) \geq \ln \lambda - \ln 2\rho
\end{equation*}
for all $\lambda \in (0, \infty)$.
\end{prop}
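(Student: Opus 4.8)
The plan is to follow the route of \cite{zhangpositive}: bring the Gesztesy--Zinchenko cocycle into the form $(\omega,\Lambda U)$ of Section~\ref{polardecomposition}, verify the hypotheses of Proposition~\ref{supositivity}, and extract the bound $L(\omega,S^{(E,\lambda v)})\ge\log\big(\inf_{x}\|S^{(E,\lambda v)}(x)\|\big)-\log(2\rho)$, in which the infimum is of order $(1-\lambda)^{-1/2}$. First I would unwind the structure. For a quantum walk the even-indexed Verblunsky coefficients vanish, so in the notation of Section~\ref{GZcocycle} one has $\alpha_{f}\equiv0$, $\alpha_{g}=\alpha=\lambda v$, $\rho_{f}\equiv1$, and $\rho_{g}=\rho=\sqrt{1-\lambda^{2}}$ is constant in $x$ (because $|v(x)|\equiv1$), while $M^{E}_{f}\equiv P$; hence $S^{(E,\lambda v)}(x)=PM^{E}(x)P=\tfrac{1}{\sqrt{1-\lambda^{2}}}\,B^{E}(x)$ with
\[
B^{E}(x)=\begin{pmatrix} E & -\lambda\,\overline{v(x)}\\ -\lambda\,v(x) & E^{-1}\end{pmatrix}.
\]
Diagonalizing $B^{E}(x)^{*}B^{E}(x)$ (eigenvalues $(1+\lambda)^{2}$ and $(1-\lambda)^{2}$) shows $\|S^{(E,\lambda v)}(x)\|=\sqrt{(1+\lambda)/(1-\lambda)}$ for \emph{every} $x\in\T$ and $E\in\pD$. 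In particular this exceeds $1$ for all $x$, so after the conjugation into $\mathrm{SL}(2,\R)$ from the proof of Lemma~\ref{lldt} the cocycle $(\omega,S^{(E,\lambda v)})$ is $\mathbb{SU}(2)$-free (a norm-one $\mathrm{SL}(2,\R)$ matrix lies in $\mathrm{SO}(2)\subset\mathbb{SU}(2)$), and the norm tends to $\infty$ as $\lambda\uparrow1$.

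The core of the argument is the uniform lower bound on the $(1,1)$-entry $c(x)$ of the unitary $U(x)$ furnished by the polar decomposition of Section~\ref{polardecomposition}, and here I would appeal to \cite{zhangpositive}. From the explicit form of $B^{E}$ one computes the top right and left singular directions of $S^{(E,\lambda v)}(x)$ to be $u^{+}(x)\propto(-\overline{v(x)}\,E^{-1},\,1)^{T}$ and $w^{+}(x)\propto(-\overline{v(x)},\,E^{-1})^{T}$ --- both \emph{independent of $\lambda$} --- so that $c(x)=(u^{+}(x))^{*}w^{+}(x-\omega)$ is a constant multiple of $E^{2}v(x)\overline{v(x-\omega)}+1$; with $E^{2}=e^{2\pi i\tau}$ and $h=kx+\theta$ this gives $|c(x)|=\big|\cos\!\big(\pi(\tau+h(x)-h(x-\omega))\big)\big|=\big|\cos\!\big(\pi(\tau+k\omega+\theta(x)-\theta(x-\omega))\big)\big|$. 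Since $S^{(E,\lambda v)}(x)=PM^{E}(x)P$ and conjugation by the unitary $P$ does not change $|c(x)|$, this is, up to a relabeling of parameters, precisely the quantity controlled in \cite{zhangpositive}; the non-degeneracy analysis behind Proposition~\ref{pzz1} and \cite[Lemma~11]{zhangpositive} therefore carries over and produces a finite set $\mathcal F=\mathcal F(\theta,k)\subset\T\times\pD$ --- the parameters at which $x\mapsto\tau+k\omega+\theta(x)-\theta(x-\omega)$ fails to avoid $\tfrac12+\Z$, which by the $\tfrac1q$-periodicity of $\theta$ forces $\pi_{1}(\mathcal F)\subset\{p/q:0\le p<q\}$ --- with the property that for each compact $\mathcal A\subset(\T\times\pD)\setminus\mathcal F$ there is $\gamma=\gamma(\theta,k,\mathcal A)\in(0,1)$ with $\inf_{x\in\T}|c(x)|\ge\gamma$ for all $(\omega,E)\in\mathcal A$; and since $c(x)$ does not depend on $\lambda$, this bound is automatically uniform in $\lambda\in(0,1)$.

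Now I would assemble the pieces. Put $\rho=\tfrac1\gamma+\sqrt{\tfrac1{\gamma^{2}}-1}>1$. The cocycle $(\omega,S^{(E,\lambda v)})$ is $\mathbb{SU}(2)$-free with $\inf_{x}|c(x)|\ge\gamma$, so the final inequality of Proposition~\ref{supositivity} --- which is stated with no restriction on the size of $\inf_{x}\|\cdot\|$ --- gives, for all $(\omega,E)\in\mathcal A$ and all $\lambda\in(0,1)$,
\[
L(\omega,S^{(E,\lambda v)})\ \ge\ \log\Big(\inf_{x\in\T}\|S^{(E,\lambda v)}(x)\|\Big)-\log(2\rho)\ =\ \tfrac12\log\frac{1+\lambda}{1-\lambda}-\log 2-\log\rho\ \ge\ -\tfrac12\log(1-\lambda)-\log 2-\log\rho,
\]
using $\log(1+\lambda)\ge0$ in the last step. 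This is the claimed estimate with $c_{0}=c_{0}(\theta,k,\mathcal A)=-\log 2-\log\rho$; letting $\lambda\uparrow1$ drives the right-hand side to $+\infty$ uniformly in $(\omega,E)\in\mathcal A$, which yields the final assertion on uniform positivity.

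The step I expect to be the real obstacle is the middle one --- establishing $\inf_{x}|c(x)|\ge\gamma$ off a finite set, with $\gamma$ depending only on $(\theta,k,\mathcal A)$ and uniform as $\lambda\uparrow1$. The norm identity and the invocation of Proposition~\ref{supositivity} are routine once this is in hand, so the entire content of the slogan ``the same function works due to the same reasoning'' is the verification that the singular-direction computation above, and the resulting $(\omega,E,x)$-dependence of $c(x)$ for the quantum-walk cocycle, agree --- via the identity $S^{(E,\lambda v)}=PM^{E}P$ --- with those for the Szeg\H o cocycle treated in \cite{zhangpositive}, so that the delicate part of that paper, namely the identification of $\mathcal F$ and the uniform control of $\gamma$, transfers without modification.
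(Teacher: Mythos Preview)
Your proposal is not a proof of Proposition~\ref{supositivity} at all: that proposition is quoted verbatim from \cite[Lemma~11]{zhangpositive} and is not proved in the paper. What you have written is an attempted proof of Lemma~\ref{qwpositivity}, which \emph{uses} Proposition~\ref{supositivity} as a black box. I will therefore compare your argument to the paper's proof of Lemma~\ref{qwpositivity}.

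Your computation of the singular directions and of $|c(x)|=\big|\cos\!\big(\pi(\tau+k\omega+\theta(x)-\theta(x-\omega))\big)\big|$ on the real axis is correct, and it is indeed independent of $\lambda$ there. The gap is in the next step: you need $\inf_{x\in\T}|c(x)|\ge\gamma>0$ in order to invoke Proposition~\ref{supositivity}, and on the real axis this simply fails for a set of parameters that is far from finite. For fixed irrational $\omega$ with $\theta$ non-constant, the map $x\mapsto\theta(x)-\theta(x-\omega)$ is a non-constant real-analytic function on $\T$, hence its range is a nondegenerate interval $[m,M]$; whenever $\tau+k\omega\in[\tfrac12-M,\tfrac12-m]\pmod 1$ there is some $x\in\T$ with $c(x)=0$. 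Thus the ``bad'' set of $(\omega,E)$ on which your real-axis argument breaks down contains, for each such $\omega$, an arc of $E$'s of positive length --- it is not a finite set $\mathcal F$, and your appeal to ``the non-degeneracy analysis behind Proposition~\ref{pzz1}'' does not produce one. The phrase ``fails to avoid $\tfrac12+\Z$'' conflates the (finite) set where the equation holds \emph{identically} in $x$ with the (large) set where it has \emph{some} solution; only the former is finite, but it is the latter that obstructs your hypothesis.

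This is precisely why the paper does \emph{not} stay on the real axis. It extends the cocycle to a complex strip, observes that for $(\omega,E)\notin\mathcal F$ the analytic function $g(z;\omega,E)=E^{2}v(z)+v(z-\omega)$ has only finitely many zeros there, and then chooses a height $y_{j}$ so that $|c(x+iy_{j})|$ is uniformly bounded below on $x\in\T$ (with the additional complication that off the real axis $c$ \emph{does} depend on $\lambda$, which is handled by first setting $\lambda=1$ and then using continuity and compactness). Proposition~\ref{supositivity} is applied to the shifted cocycle $(\omega,S_{y_{j}}^{(E,\lambda v)})$, and the resulting lower bound on $L(\omega,S_{y_{j}}^{(E,\lambda v)})$ is transferred back to $y=0$ using Avila's quantized acceleration and the convexity of $y\mapsto L(\omega,S_{y})$. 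Your shortcut of applying Proposition~\ref{supositivity} directly at $y=0$ cannot work without this complexification step.
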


Another important tool in the proof is {\it acceleration} of $\text{SL}(2,\mathbb{C})$ cocycles which is first defined in \cite{globaltheory},
\begin{equation*}
    \tilde{\omega}(\omega,A)= \lim_{y\to 0^{+}}\frac{1}{2\pi y}(L(\omega,A_{y})-L(\omega,A))
\end{equation*}
where $A \in C^{\omega}_{\delta}(\T, \text{SL}(2,\mathbb{C}))$ and $A_{y}(x)=A(x+iy) \in C^{\omega}(\T, \text{SL}(2,\mathbb{C}))$ for each $y \in (-\delta,\delta)$.

\begin{proof}[Proof of Lemma \ref{qwpositivity}]
Denote $\Omega_{\delta}=\{z=x+iy\in \mathbb{C}/\mathbb{Z}: |y| \in \delta \}$ the strip where the analytic cocycle $(\omega, S^{(E,\lambda v)}(x))$ can be extended.
Let $E=e^{2\pi it}$, $z=x+iy$ and $v(z)=r(z)e^{2 \pi i h(z)} = r e^{2\pi ih}$, where both $r$ and $h$ are real-valued functions.

By the formula $\|A\|^{2}=\frac{1}{2}(tr(A^{*}A)+\sqrt{tr(A^{*}A)-4})$, we can get
\begin{equation*}
\|S^{(E,\lambda v)}(x)\| = \sqrt{\frac{2 + \lambda^{2}(r^{2}+r^{-2})+\lambda(r+r^{-1}) \sqrt{4 + [\lambda (r - r^{-1})]^{2}}}{2(1 - \lambda^{2})}},
\end{equation*}
where $a=a(\lambda,r)=\lambda(r-r^{-1})+\sqrt{4+[\lambda(r-r^{-1})]^{2}}$.
Obviously, $r(z)$ and $a(z)$ are uniformly bounded away from $\infty$ and $0$ in any compact subregion of $\Omega_{\delta}$.

It's easy to see that $\|S^{(E,\lambda v)}(x)\| $ is uniformly of size $\sqrt{\frac{1}{1-\lambda}}$. In particular, if $y=0$, then $r=1$ and $\|S^{(E,\lambda v)}(x)\|=\sqrt{\frac{1+\lambda}{1-\lambda}}$ for all $x \in \T$. A direct calculation shows
\begin{equation*}
U_{2}(z)=\frac{1}{\sqrt{a^{2}+4}}\begin{pmatrix}
a(z) & \frac{2}{E}e^{-2\pi ih(z)} \\ -2Ee^{2\pi ih(z)} & a(z)
\end{pmatrix}.
\end{equation*}
Since
\begin{equation*}
U(z)=U_{2}(z)^{*}S^{(E,\lambda v)}(z-\omega)U_{2}(z-\omega)\Lambda(z-\omega)^{-1},
\end{equation*}
we obtain that the upper-left coefficient of $U$ is
\begin{equation*}
\begin{split}
c(z;\omega,E;\lambda)=\frac{c_{1}}{E}\{ a(z-\omega)&a(z)E^{2} + 2\lambda r(z-\omega)a(z-\omega)e^{2\pi i(h(z-\omega)-h(z))}\\ &+\frac{2\lambda a(z)E^{2}}{r(z-\omega)} + 4e^{2\pi i (h(z-\omega)-h(z))} \},
\end{split}
\end{equation*}
where
\begin{equation*}
c_{1}=\frac{\|S^{(E,\lambda v)}(z-\omega)\|^{-1}}{\sqrt{(a(z)^{2}+4)(a(z-\omega)^{2}+4)(1-\lambda^{2})}}
\end{equation*}
is uniformly bounded away from $\infty$ and $0$ for all $z$ in any compact subregion
of $\Omega_{\delta}$ and all $\lambda \in [0,1]$. Obviously, the uniform boundedness
of the left-upper entry of $U(z)$ depends on $a(z-\omega)a(z)E^{2} + 2\lambda
r(z-\omega)a(z-\omega)e^{2\pi i(h(z-\omega)-h(z))}\frac{2\lambda
a(z)E^{2}}{r(z-\omega)} + 4e^{2\pi i (h(z-\omega)-h(z))}$ which will be analyzed in the next step.

For $\lambda=1$, we have $a(1,r)=2r$ and thus
\begin{equation*}
\begin{split}
c(z;\omega,E;\lambda)&=\frac{4c_{1}e^{-2\pi ih(z)}}{E}\{ r(z-\omega)(v(z)E^{2}+v(z-\omega))+r(z-\omega)^{-1}(v(z)E^{2}+v(z-\omega)) \}\\
&=c_{2}[E^{2}v(z)+v(z-\omega)],
\end{split}
\end{equation*}
where $|c_{2}|$ and $|c_{2}^{-1}|$ are uniformly bounded. Then we can reduce the analysis of uniform positivity of $|c(z;\omega,E;1)|$ to $|g(z;\omega,E)|=E^{2}v(z)+v(z-\omega)$ and obtain
\begin{equation*}
E^{2}v(z)+v(z-\omega)=0 \Leftrightarrow \theta (z)-\theta (z-\omega) = \frac{1}{2}-2t-k\omega + m,
\end{equation*}
where $m \in \mathbb{Z}$. Since $q$ is the largest positive integer such that $\theta(z+\frac{1}{q})=\theta(z)$ and $\theta$ is a nonconstant real-analytic function, these together imply that:

(1) $\theta(z)-\theta(z-\omega)=\frac{1}{2}-2t-k\omega + m, \forall z \in \Omega_{\delta} $, only if $(\omega,t)=(\frac{p}{q},\frac{1}{4}+\frac{m}{2}-k\frac{p}{2q})$, where $p=0,1,\dots,q-1$. Obviously, such pairs $(\omega,t)$ form a finite set, which we denote by $\mathcal{F}$.

(2) $\theta(z)-\theta(z-\omega)=\frac{1}{2}-2t-k\omega + m$ has at most finitely many solutions in $\Omega_{\delta}$.

\medskip

Compared with the proof of \cite[Theorem A]{zhangpositive}, we can find the finite set here is different. The following part of the proof is the same with that in \cite[Theorem A]{zhangpositive}.

Now we take $\mathcal{A} \subset (\T\times \pD) \setminus \mathcal{F}$, which means for each $(\omega_{j},e^{2\pi i t_{j}}) \in \mathcal{A}$, it is not in $\mathcal{F}$ and hence satisfies condition (2). So for any $(\omega_{j},e^{2\pi i t_{j}}) \in \mathcal{A}$, we can find some height $y_{j}$ such that $E^{2}v(z)+v(z-\omega)$ is bounded away from zero for all $x\in \T$, which means $|c(x+iy_{j};\omega_{j},t_{j};1)|$ has the same property. Then for each $(\omega_{j},e^{2\pi i t_{j}}) \in \mathcal{A}$, we can find some connected open set $\mathcal{O}_{j}$ satisfying $(\omega_{j},e^{2\pi i t_{j}}) \in \mathcal{O}_{j} \subset (\T\times \pD) \setminus \mathcal{F}$ and some large $\lambda_{j}>0$ such that $|c(x+iy_{j};\alpha,t;1)|$ is bounded away from zero for all $(x,\omega,e^{2\pi it},\lambda) \in \T \times \mathcal{O}_{j} \times [\lambda_{j},1]$.

Now by Proposition \ref{supositivity}, without loss of generality, we can assume $\lambda_{j}$ is such that there is a constant $\eta_{j}=\eta_{j}(\lambda_{j},\mathcal{O}_{j})$ and for each $(\omega,E,\lambda)\in \mathcal{O}_{j}\times[\lambda_{j},1)$, the following hold:

(1) $(\omega,S_{y_{j}}^{(E,\lambda v)})\in \mathcal{UH}$;

(2) $L(\omega,S_{y_{j}}^{(E,\lambda v)})\geq-\frac{1}{2}\ln (1-\lambda)+\eta_{j}$.

\medskip

By compactness of $\mathcal{A}$, there exist finitely many $j$, say $j=1,2,\dots ,l$, such that $\mathcal{A} \subset \cup_{1\leq j\leq l}\mathcal{O}_{j}$. Let $\lambda_{0}=\max \{\lambda_{1},\dots,\lambda_{l}\}$, then by \cite[Theorem 9]{zhangpositive}, there are at most $l$ nonnegative integers $n_{j}$, such that the acceleration $\tilde{\omega}(\omega,S_{y_{j}}^{(E,\lambda v)})=n_{j}$ on $\mathcal{O}_{j}\times[\lambda_{0}, 1)$. Here $n_{j} \geq 0$ follows from the fact that $L(\omega,S_{y})$ is convex in $y$ and $S^{(E,\lambda v)} \in \mathbb{SU}(1, 1)$ when $y=0$.

Let $n_{0}=\max\{n_{1},\dots,n_{l}\}$ and $y_{0}=\max\{y_{1},\dots,y_{l}\}$. By the definition of acceleration, for all $(\omega,E,\lambda) \in \mathcal{A}\times [\lambda_{0},1) $, there exists $j$, $1\leq j\leq l$ such that
\begin{equation*}
L(\omega,S^{(E,\lambda v)}) \geq L(\omega,S_{y_{j}}^{(E,\lambda v)})-\frac{n_{0}y_{0}}{2\pi} \geq -\frac{1}{2}\ln (1-\lambda) + \eta_{j} -\frac{n_{0}y_{0}}{2\pi}
\end{equation*}
Thus for all $(\omega,E,\lambda)\in \mathcal{A}\times[\lambda_{0},1)$,
\begin{equation*}
L(\omega,S^{(E,\lambda v)}) \geq -\frac{1}{2}\ln (1-\lambda) + c_{0}
\end{equation*}
with $c_{0}=\min_{1\leq j \leq l}{\eta_{j} -\frac{n_{0}y_{0}}{2\pi}}$. Obviously, we can change $c_{0}$ to allow
\begin{equation*}
L(\omega,S^{(E,\lambda v)}) \geq -\frac{1}{2}\ln (1-\lambda) + c_{0}
\end{equation*}
for all $(\omega,E,\lambda)\in \mathcal{A}\times(0,1)$.

This implies for $\lambda$ sufficiently close to 1, we can have the uniform positivity of $L(\omega,S^{(E,\lambda v)})$.

\end{proof}

\section{Appendix}

\subsection{Restriction of Eigenequations}{\label{boundary}}

In the Schr\"odinger case, by restricting the eigenequation $(H(\omega, x)-E)\xi = 0$ to a finite interval $\Lambda$, we get exactly two boundary terms, which in turn yields the identity $\xi(n) = - G^{E}_{\Lambda}(n,a)\xi(a-1)-G^{E}_{\Lambda}(n,b)\xi(b+1)$.

But in the CMV case, the analog of this formula depends on the parity of the endpoints of the finite interval. Concretely, if $\xi$ is a solution of the difference equation $\CE \xi = z\xi$, define
\[
\widetilde\psi(a)
=
\begin{cases}
\left(z\bar{\beta}-\alpha_a\right)\xi(a) - \rho_a \xi(a+1)
& a \text{ is even,} \\
\left(z\alpha_a - \beta\right)\xi(a) + z\rho_a \xi(a+1)
& a \text{ is odd,}
\end{cases}
\]
and
\[
\widetilde\psi(b)
=
\begin{cases}
\left(z\bar{\gamma}-\alpha_b\right)\xi(b) - \rho_b \xi(b-1)
& b \text{ is even,} \\
\left(z\alpha_b - \gamma\right)\xi(b) + z\rho_{b-1} \xi(b-1)
& b \text{ is odd.}
\end{cases}
\]
Then, for $a < n < b$ we have
\begin{equation}\label{eq:CMV:deveGreen}
\xi(n)
=
G_{[a,b]}^{\beta,\gamma}(n,a;z)\widetilde\psi(a)
+ G_{[a,b]}^{\beta,\gamma}(n,b;z)\widetilde\psi(b).
\end{equation}

\subsection{Paving Property}{\label{paving}}

For $\Lambda \subset \mathbb{Z}$, let
\begin{equation*}
    \mathcal{E}_{\Lambda} = R_{\Lambda} \mathcal{E} R_{\Lambda}^*,
\end{equation*}
where $R_{\Lambda}$ is the restriction operator, and
\begin{equation*}
    G_{\Lambda} = (z - \mathcal{E}_{\Lambda})^{-1}.
\end{equation*}
Recall the resolvent identity,
\begin{equation*}
    G_{\Lambda} = (G_{\Lambda_{1}}+G_{\Lambda_{2}})-(G_{\Lambda_{1}} + G_{\Lambda_{2}})(\mathcal{E}_{\Lambda}-\mathcal{E}_{\Lambda_{1}}-\mathcal{E}_{\Lambda_{2}})G_{\Lambda},
\end{equation*}
where $\Lambda \subset \mathbb{Z}$ is a disjoint union $\Lambda = \Lambda_{1} \cup \Lambda_{2}$, provided the inverses make sense. One of the consequences of the resolvent identity is the following \emph{paving property}.

Let $I \subset \mathbb{Z}$ be an interval of size $N > n$, such that for each $x \in I$, there is a size $n$ interval $I' \subset I$ satisfying
\begin{equation*}
    \Big \{ y \in I : |x-y| < \frac{n}{10} \Big \} \subset I'
\end{equation*}
and
\begin{equation*}
    \lvert G_{I'}(n_{1}, n_{2}) \rvert < e^{-c \lvert n_{1} - n_{2} \rvert }
\end{equation*}
for some constant $c > 0$ and $n$ sufficiently large. Then also
\begin{equation*}
    \lvert G_{I}(n_{1}, n_{2}) \rvert < e^{-\frac{c}{2} \lvert n_{1}- n_{2} \rvert }.
\end{equation*}
For the proof of this statement, see part (\uppercase\expandafter{\romannumeral4}) of \cite{BG00}.

\section{Acknowledgements}

We would like to thank Zhenghe Zhang for numerous useful discussions and suggestions throughout the preparation of this paper. F.~W.\ would like to thank his advisor Daxiong Piao (Professor at Ocean University of China) for his support.

\end{document}